\newtheorem{notation}{Notation}
 \numberwithin{notation}{section}
 \newtheorem{theorem}{Theorem}[section]
 \newtheorem{corollary}[theorem]{Corollary}
 \newtheorem{lemma}[theorem]{Lemma}
 \newtheorem{proposition}[theorem]{Proposition}
\theoremstyle{remark}
\newtheorem{remark}[theorem]{Remark}
\newtheorem{definition}[theorem]{Definition}
 \newtheorem{example}[theorem]{Example}
 \newcommand{\Aut}{\mathrm{Aut}}
 \newcommand{\PGaL}{\mathrm{P\Gamma L}}
 \newcommand{\GL}{\mathrm{GL}}
  \newcommand{\SL}{\mathrm{SL}}
 \newcommand{\PGL}{\mathrm{PGL}}
 \newcommand{\PSL}{\mathrm{PSL}}
 \newcommand{\PG}{\mathrm{PG}}
 \newcommand{\F}{\mathbb{F}}
 \newcommand{\Sym}{\mathrm{Sym}}
 \newcommand{\Gr}{\mathsf{Gr}}
 \newcommand{\Cos}{\textsf{Cos}}
 \newcommand{\la}{\langle}
 \newcommand{\ra}{\rangle}
 \newcommand\m{\overline{m}}
\newcommand\ol{\overline}
\newcommand{\Proj}[2]{\mathsf{Proj}_{(#1)}^{\, #2}}
\newcommand{\J}{\overline{J}}
\renewcommand\le\leqslant
\renewcommand\ge\geqslant
\newlength{\mylen}
\newlength{\mylentwo}
\definecolor{darkblue}{rgb}{0,0,0.8}
\begin{document}

\begin{frontmatter}

\title{Triple factorisations of the general linear group and their associated geometries}

\author{Seyed Hassan Alavi}
\ead{alavi.s.hassan@gmail.com}

\author{John Bamberg}
\ead{John.Bamberg@uwa.edu.au}

\author{Cheryl E. Praeger}
\ead{Cheryl.Praeger@uwa.edu.au}

\address{ %
Centre for the Mathematics of Symmetry and Computation,\\
School of Mathematics and Statistics,\\
The University of Western Australia,\\
35 Stirling Highway, Crawley, 6009 W.A.,
Australia.}

\begin{keyword}
triple factorisation, flag-transitive geometry, general linear group, primitive group
\MSC[2008]{20B15, 05E20}
\end{keyword}

\begin{abstract}
Triple factorisations of finite groups $G$ of the form $G=PQP$ are essential in the study of Lie
theory as well as in geometry.  Geometrically, each triple factorisation $G=PQP$ corresponds to a
$G$-flag transitive point/line geometry such that `each pair of points is incident with at least one
line'. We call such a geometry \emph{collinearly complete}, and duality (interchanging the roles of
points and lines) gives rise to the notion of \emph{concurrently complete} geometries. In this
paper, we study triple factorisations of the general linear group $\GL(V)$ as $PQP$ where the
subgroups $P$ and $Q$ either fix a subspace or fix a decomposition of $V$ as $V_1\oplus V_2$
with $\dim(V_{1})=\dim(V_{2})$.
\end{abstract}

\end{frontmatter}

\section{Introduction}\label{sec:intro}

In linear algebra, the \emph{elimination algorithm}\footnote{This is usually referred to as
  ``Gaussian'' elimination, but perhaps unjustifiably according to \cite{Grcar:2011fk}.} when
starting from the bottom left corner of a matrix $M$ (with entries in a field) and traversing up and
to the right through the matrix, produces a unique factorisation of $M$ as $U_1\pi U_2$ where
$U_1,U_2$ are upper triangular and $\pi$ is a permutation matrix (see \cite{Strang}). This
phenomenon is an instance of the famous \emph{Bruhat/Harish-Chandra decomposition} of a connected
reductive linear algebraic group $G$ as $BNB$, where $B$ is a \textit{Borel subgroup} and $N$ is the
normaliser of a maximal torus contained in $B$. We are concerned here with the case that $G$ is a
general linear group $\GL(n,\mathbb{F})$ so that the \emph{Weyl group} $N/(B\cap N)$ is the
symmetric group $S_n$.  A natural question to ask is whether there are other \emph{triple
  factorisations} of $G$ as $PQP$, for proper subgroups $P$ and $Q$; particularly when $P$ is a
parabolic subgroup.  Finding fruitful canonical forms for invertible matrices is difficult and we
desire prototype subgroup families which provide factorisations $\GL(n,\mathbb{F})=PQP$ for many
different subgroups $P$ and $Q$ in these families.  The factorisations we explore in this paper are
interesting from this perspective, as we shall see that they occur for a wide range of possible
parameters (e.g., dimension, cardinality of $\mathbb{F}$).

Apart from the role that such factorisations have in linear algebra or in the theory of groups (see
the Introduction of \cite{AP09}), there is also an interesting connection with incidence geometry,
whereby the study of flag-transitive point/line geometries involves the study of triple
factorisations of their automorphism groups. A point/line incidence geometry admitting a
flag-transitive group $G$ of automorphisms is geometrically equivalent to a \emph{coset geometry}
$\Cos(G;P,Q)$ (as defined in Section~\ref{ex:cosetgeom}) where $P$ and $Q$ are stabilisers of an
incident point and line, respectively, and this leads to a triple factorisation $G=PQP$ if and only
if
\begin{quote}\label{defn:col-con-intro}
$\Cos(G;P,Q)$ is \emph{collinearly complete}: each pair of distinct points lies on at least one line.
\end{quote}
This necessary and sufficient condition was observed by D. G. Higman and J. E. McLaughlin (see
\cite[Lemma 3]{HigMcL61}) where they introduced in \cite{HigMcL61} the notion of a \emph{geometric
  $ABA$-group} satisfying a more restrictive condition. In particular, they proved that the
(coset) geometries associated with geometric $ABA$-groups
must be linear spaces (see \cite[Proposition 1]{HigMcL61}). Higman and McLaughlin
showed that a geometric $ABA$-group acts primitively (as an automorphism group) on the point set of
the associated linear space (see \cite[Propositions 1-3]{HigMcL61}), and so $A$ is a maximal
subgroup of $G$.  As a generalisation, for a given triple factorisation $G=ABA$, Alavi and Praeger
\cite{AP09} introduced a reduction pathway to the case where $A$ is maximal and core-free, or
equivalently, $G$ acts faithfully as a primitive permutation group on points\footnote{Another
  generalisation appears in the theory of association schemes.  P-H. Zieschang
  \cite{Zieschang:1997fk} proved that if $(X, G)$ is an association scheme (where $X$ is the
  underlying set and $G$ are the classes on $X\times X$), and if there are proper \emph{closed}
  subsets $K$ and $H$ of $G$ satisfying $G=KHK$ under the \emph{complex product}, then $K$ is
  maximal (i.e., if $K$ is a closed subset of $J$, then $J\in \{K,G\}$).}.

There is a wider context in which this analysis plays a substantive part. To study triple
factorisations of finite primitive groups, one needs to study triple factorisations $G=PQP$ of
groups of Lie type where the subgroup $P$ is a maximal subgroup. We can then replace $Q$ by a
maximal overgroup provided $Q$ is not transitive on the right cosets of $P$ in $G$. From the linear
algebraic point of view, a great bulk of the analysis (see \cite{Hassan}) is in handling the case
that $P$ and $Q$ belong to the first two \emph{Aschbacher categories}. That is, these groups either
(i) stabilise a subspace (Category $\mathcal{C}_1$) or (ii) stabilise a direct decomposition of the
underlying vector space (Category $\mathcal{C}_2$).

Our first main result of this paper is a necessary and sufficient condition for the general linear
group to have a triple factorisation by two maximal parabolic subgroups, and it will be proved in
Section~\ref{sec:parabolic}.

\begin{theorem}\label{thm:linear-main}
Let $V$ be a vector space of dimension $n\ge 2$, let $U$ and $W$ be an $m$-subspace and a
$k$-subspace of $V$, respectively, and let $j:=\dim(U\cap W)$. Let $G$ be $\GL(V)$ or $\SL(V)$ (the
\emph{special linear group}).  Then $G=G_UG_WG_U$ if and only if $\max\{0,m+k-n\} \le j\le
\frac{k}{2}+\max\{0,m-\frac{n}{2}\}$.
\end{theorem}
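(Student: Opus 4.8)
The plan is to translate the group-theoretic identity $G=G_UG_WG_U$ into a purely combinatorial statement about subspace configurations, and then to resolve that statement by dimension counting. Write $P=G_U$ and $Q=G_W$, and identify the coset space $G/P$ with the set of $m$-subspaces of $V$ via $gP\mapsto gU$, so that $U$ is the base point. Expanding $g=p_1qp_2$ gives $gP=p_1qP$, so $G=PQP$ holds precisely when every $m$-subspace lies in the $P$-orbit of some member of the $Q$-orbit of $U$. First I would record the two relevant orbit decompositions: the $P=G_U$-orbits on $m$-subspaces are indexed by $\dim(U\cap U')$, and the $Q=G_W$-orbit of $U$ is exactly $\{U':\dim(U'\cap W)=j\}$ (both being standard consequences of the transitivity of $\GL(V)$ on configurations of prescribed dimensions; since the joint stabiliser of $U$ and $W$ realises every determinant, the $\SL(V)$-orbits coincide with the $\GL(V)$-orbits and the two cases are identical). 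With these, $G=PQP$ is equivalent to: for every feasible $i$ with $\max\{0,2m-n\}\le i\le m$, there exists an $m$-subspace $U'$ with $\dim(U\cap U')=i$ and $\dim(U'\cap W)=j$.

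The crux is then the following combinatorial lemma, which I would isolate and prove: for fixed $U,W$ with $\dim(U\cap W)=j$, an $m$-subspace $U'$ with $\dim(U'\cap U)=i$ and $\dim(U'\cap W)=j$ exists if and only if $\max\{0,2m-n,2j-k\}\le i\le m$. For the lower bounds (necessity), the routine constraints $i\ge 0$ and $i\ge 2m-n$ come from $U'$ and $U$ both being $m$-dimensional in $V$; the decisive extra constraint is $i\ge 2j-k$, which I would obtain from $U'\cap U\supseteq(U'\cap W)\cap(U\cap W)$ together with the fact that $U'\cap W$ and $U\cap W$ are two $j$-subspaces of the $k$-space $W$, forcing their intersection to have dimension at least $2j-k$.

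For sufficiency I would fix an adapted decomposition $V=D\oplus U_0\oplus W_0\oplus R$ with $D=U\cap W$, $U=D\oplus U_0$, $W=D\oplus W_0$, and $R$ a complement of $U+W$, and construct $U'$ explicitly. Choosing $U'\cap W$ to be a generic $j$-subspace $J$ of $W$ realises $\dim(J\cap D)=\max\{0,2j-k\}$, and extending $J$ by vectors drawn first from $R$ (which avoid $U$ entirely) and, once $R$ is exhausted, from the $U_0$-directions, yields the extreme value $i=\max\{0,2m-n,2j-k\}$; sliding individual basis vectors of $U'$ toward $U$ one at a time raises $i$ by one while preserving $\dim(U'\cap W)=j$, so the whole interval $[\,\max\{0,2m-n,2j-k\},\,m\,]$ is achieved, the top value $i=m$ being $U'=U$. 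The main obstacle is exactly this sufficiency step: one must handle both regimes according to whether $R$ is large enough to absorb all $m-j$ added directions, take care of degenerate cases (such as $U\subseteq W$ or $U+W=V$), and verify that no gaps appear in the achievable interval.

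Finally I would assemble the conclusion. The required full range $\max\{0,2m-n\}\le i\le m$ is covered precisely when the lemma's lower endpoint coincides with $\max\{0,2m-n\}$, i.e.\ when $2j-k\le\max\{0,2m-n\}$. Rearranging gives $2j\le k+\max\{0,2m-n\}$, that is $j\le\frac{k}{2}+\max\{0,m-\frac{n}{2}\}$; since $j\ge\max\{0,m+k-n\}$ holds automatically for any such configuration, this is exactly the stated criterion, completing the proof uniformly for $\GL(V)$ and $\SL(V)$.
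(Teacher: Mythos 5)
Your reformulation of $G=G_UG_WG_U$ as an orbit-covering statement is correct: the $Q$-orbit of $U$ is $\{U':\dim(U'\cap W)=j\}$, the $P$-orbits on $m$-subspaces are indexed by $i=\dim(U\cap U')$, and the factorisation holds iff every feasible $i$ is realised by some $U'$ meeting $W$ in dimension $j$. Your necessity argument (the bound $i\ge 2j-k$ from two $j$-subspaces of the $k$-space $W$) is also fine, and your key lemma is true and does imply the theorem. This is the ``dual'' of the paper's route, which instead fixes two points $U_1,U_2$ and builds a common line, handling one parameter regime by lifting a triple factorisation of the Weyl group $S_n$ through the Bruhat decomposition and the other by an explicit construction, then using the perp-duality for $m>n/2$.

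However, the sufficiency step -- which you correctly identify as the crux -- is wrong as described, not merely incomplete. In the decomposition $V=D\oplus U_0\oplus W_0\oplus R$, once $R$ is exhausted you propose to extend $J$ by vectors ``from the $U_0$-directions''; but every such vector lies in $U$, so each one adds $1$ to $\dim(U'\cap U)$. When $R$ is exhausted you must add $2m+k-2j-n>0$ of them, and the resulting $U'$ satisfies $\dim(U'\cap U)=\max\{0,2j-k\}+(2m+k-2j-n)$, which exceeds the claimed minimum $\max\{0,2m-n,2j-k\}$ by $k-2j$ whenever $2j<k$ and $k-2j>n-2m$. Concretely, take $n=6$, $m=3$, $k=4$, $j=1$: here $R=0$, your recipe gives $U'=J\oplus U_0$ with $\dim(U'\cap U)\ge 2$, so the $P$-orbits with $i\in\{0,1\}$ are never reached and $G=PQP$ is not established, even though the theorem asserts it (as $j=1\le k/2$). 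The value $i=0$ \emph{is} achievable, but only by using ``diagonal'' vectors: with $W=\la d,w_1,w_2,w_3\ra$ and $U=\la d,u_1,u_2\ra$, the subspace $U'=\la w_1,\,u_1+w_2,\,u_2+w_3\ra$ meets $W$ in $\la w_1\ra$ and meets $U$ trivially. The missing idea is precisely to extend $J$ by vectors $u+w$ with $u\in U_0\setminus\{0\}$ and $w\in W_0\setminus\{0\}$, which lie in $U+W$ but in neither $U$ nor $W$; this is the device the paper formalises in Lemma~\ref{lem:themdiagonals} and in the subspace $S_2=\la\sum_{i\in\pi}e_i:\pi\in\mathcal{P}\ra$ in the proof of Theorem~\ref{thm:linear-main}, where the parts of size at least $2$ guarantee that $S_2$ avoids $M_1$, $M_2$ and $S_1$. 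Until this regime is handled, the ``if'' direction of the theorem is unproved exactly where the real work lies.
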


In the statement, by $G_U$ we mean the stabiliser of the subspace $U$ in $G$.  Note that the
parabolic subgroups of $\GL(V)$ (and $\SL(V)$) contain scalars so that the results of these
investigations give rise to triple factorisations of $\PGL(V)$ and $\PSL(V)$ by applying the
\emph{quotient} construction introduced and developed in~\cite[Section 5]{AP09}.

The coset geometry arising from $\GL(V)$ and two of its maximal parabolic subgroups is an
\emph{$(m,k,j)$-projective} space $\Proj{m,k}{j}(V)$ in which the `points' are $m$-subspaces, the
`lines' are $k$-subspaces, with incidence between a point and line when the intersection is a
$j$-subspace (see Section~\ref{sec:proj-mkj}). In Remark~\ref{rem:proj-j}, we observe how these
point/line geometries are related to Desarguesian projective spaces and Grassmannian geometries.  As
collinear completeness of $\Proj{m,k}{j}(V)$ is equivalent to having parabolic triple factorisations
$G=PQP$ with $P$ and $Q$ parabolic (see Lemma~\ref{prop:geom-flag-trans}), we translate Theorem
\ref{thm:linear-main} to the geometric setting.

\begin{quote}\label{thm:proj-main-j}
\textbf{Paraphrase of Theorem \ref{thm:linear-main}.}  Let $V$ be a vector space of dimension $n$.
Then $\Proj{m,k}{j}(V)$ is collinearly complete if and only if $\max\{0,m+k-n\} \le j\le
\frac{k}{2}+\max\{0,m-\frac{n}{2}\}$.
\end{quote}

We also consider triple factorisations of $G=\GL(V)$ where we allow the subgroups to be of
Aschbacher types $\mathcal{C}_1$ and $\mathcal{C}_2$. Within these families we choose subgroups
large enough to allow the possibility that the corresponding geometries can be both collinearly and
concurrently complete. For subgroups $Q$ in $\mathcal{C}_2$, this means (see \cite{Hassan}) that $Q$
is a \emph{bisection subgroup}, that is, $Q$ is the (setwise) stabiliser $G_{\{V_1,V_2\}}$ of a
decomposition $V=V_{1}\oplus V_{2}$ with $\dim(V_{1})=\dim(V_{2})$. We examine
\emph{subspace-bisection} triple factorisations $G=PQP$ and \emph{bisection-subspace} triple
factorisations $G=QPQ$ where $P$ is parabolic and $Q$ is a bisection subgroup. Our approach is to
make use of the associated point/line geometry $\Proj{m,k}{(k_1,k_2)}(V)$. The point set
$\mathbb{P}$ is the set of all $m$-subspaces of $V$, the line set $\mathbb{L}$ is the set of all
bisections $\{V_{1},V_{2}\}$ of $V$ such that $V=V_{1}\oplus V_{2}$ and $\dim(V_{1})
=\dim(V_{2})=k$, and incidence between $U\in \mathbb{P}$ and $\{V_{1},V_{2}\}\in \mathbb{L}$ is
given by $(\dim(U\cap V_{1}),\dim(U\cap V_{2}))= (k_{1},k_{2})$ or $(k_{2},k_{1})$. See Section
\ref{sec:pre} for a detailed description of this flag-transitive geometry. One of the outcomes of
this paper is an exploration of this interesting flag-transitive geometry for the general linear
group, which to the authors' knowledge, has not been studied extensively in the literature.

Theorem~\ref{thm:mainC1C2C1} below is our second major result which will be proved in
Section~\ref{sec:gl-c12}.

\begin{theorem}\label{thm:mainC1C2C1}
Let $V$ be a vector space of dimension $2k$ and let $m$ be a positive integer such that $m<2k$.
Then $\Proj{m,k}{(k_1,k_2)}(V)$ is collinearly complete if and only if
$3k_2\le k+1+m+k_1$ and $(q,m,k,k_1,k_2)\ne (2,1,1,0,0)$.
\end{theorem}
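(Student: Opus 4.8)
The plan is to reduce collinear completeness to a family of linear-algebra existence problems indexed by one integer, solve each by constructing a bisection, and then take the envelope over the index. Since $\GL(V)$ is transitive on $m$-subspaces and the stabiliser of an $m$-subspace $U$ is transitive on the $m$-subspaces $U'$ with a given $t:=\dim(U\cap U')$, the ordered pairs of distinct points of $\Proj{m,k}{(k_1,k_2)}(V)$ form $\GL(V)$-orbits indexed by $t$ with $\max\{0,2m-2k\}\le t\le m-1$, and $\GL(V)$ preserves incidence. Hence the geometry is collinearly complete if and only if for every admissible $t$ some pair $(U,U')$ with $\dim(U\cap U')=t$ lies on a common line. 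So I fix $t$ and seek a bisection $\{V_1,V_2\}$ (with $V=V_1\oplus V_2$ and $\dim V_1=\dim V_2=k$) incident to both $U$ and $U'$; throughout I assume without loss of generality $k_1\le k_2$.

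For fixed $t$ I work in the adapted decomposition $V=(U\cap U')\oplus A\oplus B\oplus D$, where $U=(U\cap U')\oplus A$, $U'=(U\cap U')\oplus B$, and $D$ is a complement of $U+U'$, of dimensions $t$, $m-t$, $m-t$, $2k-2m+t$. Writing $x_i=\dim(U\cap V_i)$, $y_i=\dim(U'\cap V_i)$, $c_i=\dim(U\cap U'\cap V_i)$, incidence with both points forces $\{x_1,x_2\}=\{y_1,y_2\}=\{k_1,k_2\}$, and one reads off the necessary conditions $c_i\le\min\{x_i,y_i\}$, $c_1+c_2\le t$, the containment bounds $x_i+y_i-c_i\le\min\{k,2m-t\}$ (from $(U\cap V_i)+(U'\cap V_i)\subseteq V_i\cap(U+U')$), and the rigidity bound $|x_i-y_i|\le m-t$ expressing that $U$ and $U'$ agree on the $t$-space $U\cap U'$. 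There are just two alignments to try, the same one $x_1=y_1=k_2$ and the opposite one $x_1=k_2$, $y_1=k_1$. I then check that these numerical conditions are also sufficient, by building $V_1,V_2$ explicitly: split $U\cap U'$ according to $c_1,c_2$, insert diagonal vectors realising the common defect $m-k_1-k_2$ in each of $U$ and $U'$, and use the $D$-sector to top both parts up to dimension $k$.

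Assembling the two alignments, the same alignment is feasible exactly for $t$ in an upper segment $[S,m-1]$ with $S=\max\{0,2k_2-k\}+\max\{0,2k_1-k\}$, while the opposite alignment (whose rigidity bound caps $t$) covers a lower segment $[\,\max\{0,2m-2k\},\,O_{\mathrm{hi}}\,]$ with $O_{\mathrm{hi}}=m-(k_2-k_1)$; one checks that the lower end of the range is always covered and that the top value $t=m-1$ is always same-feasible. Thus the whole range is covered precisely when these two segments leave no integer gap, that is when $S\le O_{\mathrm{hi}}+1$, and this integrality step is exactly where the additive $1$ is born. Finally I simplify $S\le O_{\mathrm{hi}}+1$ using the single-incidence feasibility bounds $k_1+k_2\le m$ and $k_2\le k$: the condition is automatic when $2k_2\le k$ or when $2k_1>k$, and in the remaining range $2k_1\le k<2k_2$ it reads $2k_2-k\le m-(k_2-k_1)+1$; in every case this is equivalent to $3k_2\le k+1+m+k_1$.

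It remains to address the field. The dimension-theoretic sufficiency of the second paragraph can break down over $\F_2$ when there are too few subspaces in sufficiently general position, and I would verify that, among parameter tuples satisfying $3k_2\le k+1+m+k_1$, this occurs only for $(q,m,k,k_1,k_2)=(2,1,1,0,0)$ --- the configuration of three points in which every line is incident with a single point --- whereas for all other tuples (with $q\ge3$, or with larger dimensions) the explicit bisection can be produced. The main obstacle is precisely this sufficiency step: turning the list of necessary intersection-dimension inequalities into an actual bisection over an arbitrary $\F_q$, while isolating the unique small-field exception. Getting the full list of inequalities right --- in particular not omitting the rigidity bound $|x_i-y_i|\le m-t$, which is what rules out spurious ``solutions'' such as forcing $V_1=U$ when $k_2=k=m$ --- is the delicate point on which the whole characterisation turns.
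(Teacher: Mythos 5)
Your skeleton is the same as the paper's: partition pairs of points into $\GL(V)$-orbits indexed by $t=\dim(U\cap U')$, observe that for $t>m+k_1-k_2$ the ``same'' alignment $\dim(U\cap V_i)=\dim(U'\cap V_i)$ is forced (your rigidity bound is exactly Proposition~\ref{thm:c12-gen-imposs}(a)), extract the necessary inequality from the first forced value $t=m+k_1-k_2+1$, and obtain the additive $1$ from the requirement that the two $t$-segments leave no integer gap. The necessity direction of your argument is sound, since it only uses that your listed conditions are necessary. But the heart of the theorem is the converse, and there you write ``I then check that these numerical conditions are also sufficient, by building $V_1,V_2$ explicitly'' and later ``I would verify that'' the only small-field failure is $(q,m,k,k_1,k_2)=(2,1,1,0,0)$. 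Neither of these is a step you can assert: realising a prescribed vector of intersection dimensions by an actual bisection is precisely where all the work lies, and it is field-dependent. The basic tool is the existence of two \emph{disjoint} diagonal subspaces of a decomposition, which fails exactly when $(\max\{\dim Y_1,\dim Y_2\},q)=(1,2)$ (Lemma~\ref{lem:c12-2diag}); this single failure propagates into exceptional sub-cases throughout the construction (see the $(\dim \bar V,q)=(2,2)$ and $r=1$ branches of Theorem~\ref{thm:c12:gen-poss}, and the two Cases of Proposition~\ref{prop:c12-diag}). In particular, for $q=2$, $k_1=0$, $m=k=k_2+1$ with $k\in\{2,3,4\}$ -- tuples which \emph{do} satisfy $3k_2\le k+1+m+k_1$ -- the generic dimension-counting construction genuinely breaks down and one needs the ad hoc bisections of Lemma~\ref{lem:smallcase} (Table~\ref{table:nicebisections}). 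So the exceptional $\F_2$ behaviour is not one isolated tuple but a family of configurations that must each be settled by hand; asserting that only $(2,1,1,0,0)$ survives is essentially restating the theorem.

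Two smaller points. First, your list of necessary numerical conditions is not obviously complete: for instance the subspaces $V_i\cap(U+U')$ are disjoint inside $U+U'$, giving the joint bound $\sum_i(x_i+y_i-c_i)\le 2m-t$, which is strictly stronger than your per-$i$ containment bounds (it happens to be implied by $t\le m-1$ and $k_1+k_2\le m$ in the relevant alignments, but you would need to check this, and any further omission would falsify the claimed sufficiency). Second, by not first reducing to $m\le k$ via the perp-map isomorphism of Proposition~\ref{prop:reduction}, your envelope computation must handle the regime $2k_1>k$ and the truncated range $t\ge 2m-2k$, which complicates the verification that $S\le O_{\mathrm{hi}}+1$ is equivalent to $3k_2\le k+1+m+k_1$; the reduction to $m\le k$ makes several of your cases vacuous and is worth doing first.
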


Let $G=\GL(2k,q)$, let $P$ be the stabiliser in $G$ of an $m$-subspace $U$ of $V$, and let $Q$ be
the stabiliser in $G$ of the decomposition $V=V_{1}\oplus V_{2}$.  Suppose $\dim(U \cap V_1)=k_1$
and $\dim(U \cap V_2)=k_2$ with $k_1\le k_2$.  Then Theorem \ref{thm:mainC1C2C1} implies that
$G=PQP$ if and only if $3k_2\le k+1+m+k_1$ and $(q,m,k,k_1,k_2)\ne (2,1,1,0,0)$.

For the dual triple factorisation, the situation is more difficult, and we have the following result (proved in
Lemma \ref{lemma:c21-main-noexist}, Proposition \ref{prop:c21-main-exist}, and Proposition \ref{propn:main_mk}).

\begin{theorem}\label{thm:mainC2C1C2}
Let $V$ be a vector space of dimension $2k$ and let $m$ be a positive integer such that $m\le k$. 
\begin{enumerate}[(i)]
\item If $k_2>m/2$, then $\Proj{m,k}{(k_1,k_2)}(V)$ is concurrently complete if and only if $(q,k)= (2,1)$.
\item 
$\Proj{m,k}{(0,0)}(V)$ is concurrently complete if and only if $(q,k)\ne(2,1), (3,1), (2,2)$.
\end{enumerate}
\end{theorem}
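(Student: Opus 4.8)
The plan is to use the dual of Lemma~\ref{prop:geom-flag-trans}, namely that $\Proj{m,k}{(k_1,k_2)}(V)$ is concurrently complete if and only if $G=QPQ$, equivalently if and only if every pair of distinct lines shares an incident point. Since $G=\GL(V)$ is transitive on lines, I would fix one reference bisection $\ell_0=\{V_1,V_2\}$ and, using the action of its stabiliser $Q=G_{\{V_1,V_2\}}$ on the remaining lines, reduce the problem to checking, for a single representative $\ell=\{W_1,W_2\}$ from each $Q$-orbit, whether some $m$-subspace $U$ is incident with both. Here incidence with $\ell_0$ means $(\dim(U\cap V_1),\dim(U\cap V_2))\in\{(k_1,k_2),(k_2,k_1)\}$, and similarly for $\ell$. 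The first technical step is to understand these $Q$-orbits, i.e.\ the relative positions of two decompositions $V=V_1\oplus V_2=W_1\oplus W_2$; I would do this by decomposing $V$ into indecomposable summands compatible with both decompositions, which have dimension $1$ (lying in one of the four intersections $V_i\cap W_j$) or $2$ (transverse), so that the relative position is recorded by the numbers $\dim(V_i\cap W_j)$ together with the number and type of the two-dimensional summands. This isolates the ``worst'' relative positions---those in which the $V_i$ and $W_j$ are as transverse as possible---which carry the genuine obstruction.

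For the non-existence direction in part~(i) (Lemma~\ref{lemma:c21-main-noexist}) the argument is clean and avoids heavy counting. The hypothesis $k_2>m/2$ forces $k_1<k_2$, so any point $U$ incident with a line meets one part in the large dimension $k_2$; a common point of two lines would then contain a $k_2$-subspace $A$ of some $V_i$ and a $k_2$-subspace $B$ of some $W_j$, whence $\dim(A\cap B)\ge 2k_2-m>0$ and so $V_i\cap W_j\ne 0$. Thus no common point can exist as soon as a transverse pair of bisections (all $V_i\cap W_j=0$) is available, and such a pair exists for every $(q,k)\ne(2,1)$ simply because there are enough $k$-subspaces. The remaining case $(q,k)=(2,1)$ is verified by hand: there are only three $1$-subspaces, the three bisections are their three pairs, and any two of these pairs meet, so concurrent completeness holds (this is Proposition~\ref{prop:c21-main-exist}).

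For part~(ii) with $(k_1,k_2)=(0,0)$, incidence means $U$ meets all four of $V_1,V_2,W_1,W_2$ trivially. For the three exceptional pairs $(q,k)=(2,1),(3,1),(2,2)$ I would exhibit two transverse bisections whose four parts exhaust the available $1$- or $2$-subspaces (e.g.\ over $\F_3$ in dimension $2$ the four parts of two complementary bisections use all four $1$-subspaces, leaving no transverse point), thereby producing a pair of lines with no common point. For all other $(q,k)$ one must instead establish existence uniformly: since $m\le k$, a generic $m$-subspace meets each $k$-subspace trivially, and I would count the $m$-subspaces $U$ with $U\cap X=0$ for $X\in\{V_1,V_2,W_1,W_2\}$ by inclusion--exclusion over the four events ``$U\cap X\ne 0$'', using Gaussian binomial coefficients to count $m$-subspaces meeting a fixed $k$-subspace nontrivially, and show the resulting count is strictly positive. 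Minimising this count over the finitely many relative positions from the first step then gives Proposition~\ref{propn:main_mk}.

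I expect the main obstacle to be exactly this last step: a crude union bound over the four subspaces is too weak near the boundary, so the exact inclusion--exclusion---which must account for the forced relations $V_1\oplus V_2=V$, $W_1\oplus W_2=V$ and the various intersection dimensions $\dim(V_i\cap W_j)$, and hence for the differing arrangement types---has to be carried out carefully to pin down that the threshold fails precisely at $(2,1),(3,1),(2,2)$ and nowhere else. As a robust cross-check in these tight cases I would build a common point $U$ explicitly from the indecomposable normal form of the pair of bisections, assembling $U$ summand-by-summand so that it attains the prescribed intersection dimensions with both lines, which confirms existence directly without relying on the estimate being sharp.
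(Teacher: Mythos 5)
Your treatment of part (i) is sound and essentially matches the paper's: the paper likewise produces four pairwise disjoint $k$-subspaces (via a Desarguesian spread, which has at least four members precisely when $q^k\ge 3$, i.e.\ $(q,k)\ne(2,1)$) and observes that an $m$-subspace cannot meet two disjoint $k$-subspaces each in dimension $k_2>m/2$; the remaining case $(q,k)=(2,1)$ is Lemma~\ref{lemma:kequals1}(ii) (your citation of Proposition~\ref{prop:c21-main-exist} there is a slip, but the three-point argument is correct).

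Part (ii), however, has genuine gaps. First, your plan for the exceptional case $(q,k)=(2,2)$ --- exhibiting two \emph{transverse} bisections with no common point --- provably cannot work: Lemma~\ref{lem:fourskew} shows that for $q^k\ge 4$ any four pairwise disjoint $k$-subspaces of $V(2k,q)$ admit a fifth disjoint from all of them (for $q=2$ via the matrix $\begin{bmatrix} I&A^{-1}\end{bmatrix}$ with $A$, $A+I$, $A+A^{-1}$ all invertible), so every transverse pair of bisections of $V(4,2)$ \emph{does} have a common point; the actual counterexample in Lemma~\ref{lem:c21-smallcase} is necessarily non-transverse, and the paper explicitly warns that the transverse strategy ``does not work for good reasons''. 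Second, the positive direction, which is where nearly all the work lies, is left as a plan. The exact inclusion--exclusion you propose would have to be minimised over all relative positions of two bisections, and these are not captured by indecomposable summands of dimension at most $2$: a transverse pair is determined, modulo the stabiliser of the first bisection, by a conjugacy class of $\GL(k,q)$ (the transition map $\phi^{-1}\psi$ between the two graphs), so indecomposable pieces of every even dimension occur and the combinatorics of the higher-order intersection terms is not uniform in $(q,k)$. The paper sidesteps this entirely: it uses the cruder but uniform orbit bound $|\Gamma|>|\Omega|/2$ from the Restricted Movement Criterion (Remark~\ref{rem:c21-diag}, Proposition~\ref{prop:c21-main-exist}) to settle all but the families in Table~\ref{tbl:c21-excep-prop}, then an induction on $k$ for $m=k$ (Lemma~\ref{lem:C2C1C2induction}, built from the hyperplane/quotient argument of Lemma~\ref{lem:fournotskew} together with Lemma~\ref{lem:fourskew}, which for $q>2$ rests on the blocking-set bounds of Heim and Bruen), monotonicity in $m$ (Lemma~\ref{lemma:c21-smallerm}), and computer verification of the base cases $(q,k)=(3,2)$ and $(2,3)$. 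Your proposal offers no workable substitute for these steps, so part (ii) remains unproved as written.
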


This theorem shows that if $k_2$ is large relative to $m$, then `usually' $\Proj{m,k}{(k_1,k_2)}(V)$ is 
not concurrently complete, while on the other hand, if $k_2=0$ (whence also $k_1=0$), then very often 
$\Proj{m,k}{(0,0)}(V)$ is concurrently complete. Note that Theorem~\ref{thm:mainC2C1C2} also yields
information when $m>k$ on applying the duality result Proposition~\ref{prop:reduction}.

%
%

\section{Rank $2$ geometries}\label{sec:rank2-geom}

In this paper we deal almost exclusively with rank $2$ geometries. Geometries with higher ranks will
appear occasionally, but we only need the formal notion of a geometry in the simplest case. A
\textit{rank $2$ geometry} $\mathcal{G}$, which we sometimes call a point/line geometry, consists of
a set $\mathbb{P}$ of points, a set $\mathbb{L}$ of lines and an incidence relation $\ast$ between
them. A \emph{flag} of $\mathcal{G}$ is an incident point and line pair. We will stipulate that a
rank $2$ geometry has at least one flag, and moreover, we assume that the following three
non-degeneracy properties hold:
\begin{definition}[Non-degeneracy conditions]\label{hyp:geom}
\
\begin{enumerate}[(i)]
\item the sets of points and lines are finite and of size at least two,
\item every point is incident with at least one line,
\item every line is incident with at least one point.
\end{enumerate}
\end{definition}

For a rank $2$ geometry $\mathcal{G}=(\mathbb{P},\mathbb{L},\ast)$, the \emph{dual geometry}
$\mathcal{G}^{\vee}$ of $\mathcal{G}$ is the geometry obtained by interchanging points and lines of
$\mathcal{G}$ and assuming the same incidence relation, that is to say,
$\mathcal{G}^{\vee}=(\mathbb{L},\mathbb{P},\ast)$.  Each rank $2$ geometry $\mathcal{G}
=(\mathbb{P},\mathbb{L},\ast)$ gives rise to a graph $(V,E)$ called its \emph{incidence graph} with
vertex set $V=\mathbb{P}\cup \mathbb{L}$ such that $\{p,\ell\}\in E$ if and only if $p\ast \ell$.
By the definition of a geometry, two elements of the same type are not allowed to be incident, and
so the incidence graph is a bipartite graph.

A \emph{geometry isomorphism} $f$ from $\mathcal{G}_1 =(\mathbb{P}_1,\mathbb{L}_1,\ast_1)$ to
$\mathcal{G}_2 =(\mathbb{P}_2,\mathbb{L}_2,\ast_2)$
is a bijection from the elements $\mathbb{P}_1\cup\mathbb{L}_1$ of $\mathcal{G}_1$ onto the elements 
$\mathbb{P}_2\cup\mathbb{L}_2$ of $\mathcal{G}_2$ such that
\begin{enumerate}
\item[(i)] incidence is preserved: $x \ast_1 y\iff f(x)\ast_2 f(y)$, and
\item[(ii)] points are sent to points, lines are sent to lines: $f(\mathbb{P}_1)=\mathbb{P}_2$ and
  $f(\mathbb{L}_1)=\mathbb{L}_2$.
\end{enumerate}
An automorphism of $\mathcal{G} =(\mathbb{P},\mathbb{L},\ast)$ is a geometry isomorphism of
$\mathcal{G}$ onto itself. The group of all automorphisms of a rank $2$ geometry $\mathcal{G}$,
denoted by $\Aut(\mathcal{G})$, is the \emph{full automorphism group} of $\mathcal{G}$. Note that
$G$ acts on the set of flags of $\mathcal{G}$ via $(p,\ell)^{g}=(p^{g},\ell^{g})$, for all flags
$(p,\ell)$ of $\mathcal{G}$ and $g\in G$. The group $G$ is \emph{flag-transitive} (respectively,
\emph{point-transitive}, \emph{line-transitive}) if $G$ acts transitively on the set of flags
(respectively, the set of points, the set of lines) of $\mathcal{G}$.

Study of a special kind of rank $2$ geometry called a linear space, by Higman and McLaughlin in
\cite{HigMcL61} inspired our work, as mentioned in the introduction. A point/line
geometry is called a \emph{linear space} if any two points are incident with exactly one line, and
more generally it is called a \emph{partial linear space} if each pair of points is incident with at
most one line.

Here we give a formal definition of the completeness properties we study 
that generalise linear spaces (and their duals).

\begin{definition}\label{defn:col-con}
Let $\mathcal{G}=(\mathbb{P},\mathbb{L}, \ast)$ be a rank $2$ geometry. If each pair of distinct
points is incident with at least one line, then $\mathcal{G}$ is said to be \emph{collinearly
  complete}, and if each pair of distinct lines is incident with at least one point, then
$\mathcal{G}$ is said to be \emph{concurrently complete}.
\end{definition}

So a rank $2$ geometry is collinearly complete if and only if its dual geometry is concurrently
complete, and collinear (resp. concurrent) completeness is a geometry isomorphism invariant. 

\subsection{Projective spaces}\label{ex:proj}  Let $V$ be a vector space of finite dimension $n\ge 3$
over a division ring $\F$. The set of all nontrivial proper subspaces of $V$ with incidence given by
symmetrised inclusion is called the \emph{projective geometry} of $V$ and is denoted by $\PG(V)$ or
$\PG(n-1,\F)$. Note also that the full automorphism group of the projective space $\PG(V)$ is the
\emph{projective semilinear group} $\PGaL(n,\F)$ (see \cite[Theorem 3.2]{Buek95-Handbook}).  The
most natural point/line geometry $(\mathbb{P},\mathbb{L},\ast)$ associated with $\PG(V)$ has
$\mathbb{P},\mathbb{L}$ the sets of $1$-subspaces and $2$-subspaces, respectively, with $\ast$ being
inclusion. This geometry is a linear space, and we often refer to it as $\PG(V)$ (abusing the
notation somewhat).

\subsection{Grassmannian geometries}\label{ex:grassmannian}  	
For a vector space $V$ of dimension $n$ over a field $\F$, the set of all $m$-subspaces of $V$ is
known as a \emph{Grassmannian} of $V$ and is denoted by $\Gr_{m}(V)$ (see
\cite[ch. 3]{Taylor92}).  We may also assign a geometric structure to $X:=\Gr_{m}(V)$ and
obtain a rank $2$ geometry $\mathcal{A}_m(V)$ called a \emph{Grassmannian geometry}. The points are
elements of $X$ and the lines are defined as follows: Let $U_{1}$ be an $(m-1)$-subspace and $U_{2}$
be an $(m+1)$-subspace with $U_{1}\subseteq U_{2}$. Then the \emph{Grassmannian line} $L(U_{1},
U_{2})$ defined by $U_{1}$ and $U_{2}$ is the set $\{U\in X \mid U_{1}\subseteq U\subseteq U_{2}
\}$. Note that $\Gr_{m}(V)$ with this endowed incidence structure forms a partial linear
space (see \cite[pp.49-50]{Buek95-Handbook} and \cite[Definition 6.9.1]{BuekCoh-unpub}).

\subsection{$2$-designs}\label{ex:2designs}
Let $v$, $k$, and $\lambda$ be positive integers such that $2\le k\le v$.  A $2$-$(v,k,\lambda)$
design is a rank $2$ geometry $\mathcal{D}:=(\mathbb{X},\mathbb{B},\ast)$ with $|\mathbb{X}|=v$,
$\mathbb{B}$ is a collection of $k$-element subsets (called \emph{blocks} of $\mathbb{X}$), the
incidence relation $\ast$ is simply given by the inclusion relation, and each pair of points of
$\mathbb{X}$ is contained in exactly $\lambda$ blocks of $\mathbb{B}$.  Note that the projective
space $\PG(n-1,\mathbb{F})$, where $\mathbb{F}$ is a finite field of order $q$, is a
$2$-$(\frac{q^{n}-1}{q-1},q+1,1)$ design with $\mathbb{X}$ the set of all $1$-subspaces of $V$, and
$\mathbb{B}$ the set of all $2$-subspaces of $V$.

\subsection{Coset geometries}\label{ex:cosetgeom}
For a group $G$ and a subgroup $H$ of $G$, let $\Omega_{H}$ be the set of right cosets of $H$ in
$G$. Then the group $G$ acts on $\Omega_{H}$ by right multiplication and the kernel of the action is
the \emph{core} $\cap_{g\in G}H^g$ of $H$ in $G$. Let $G$ be a group with $P$ and $Q$ proper
subgroups of $G$. Set $\mathbb{P}:=\Omega_{P}$ and $\mathbb{L}:=\Omega_{Q}$. We say that the
elements $p:=Px\in \mathbb{P}$ and $\ell:=Qy\in \mathbb{L}$ are incident, and we write $p\ast \ell$,
if and only if $ Px\cap Qy\neq \varnothing$.  One can check that the non-degeneracy properties
(Definition \ref{hyp:geom}) hold when $P\cap Q$ is a proper subgroup of finite index in both $P$ and
$Q$. This geometry is denoted $\Cos(G;P,Q)$ and is called the \emph{coset geometry} associated with
the group $G$ with fundamental subgroups $P$ and $Q$. In particular, $G$ is a flag-transitive group
of automorphisms of this geometry and we have the following converse:

\begin{lemma}[{\cite[Lemmas 1 and 3]{HigMcL61}}]\label{prop:geom-flag-trans}
Let $\mathcal{G}$ be a rank $2$ geometry and $G\le \Aut(\mathcal{G})$. Then $G$ acts transitively on
the flags of $\mathcal{G}$ if and only if $\mathcal{G}\cong \Cos(G;P,Q)$ where $P$ is the stabiliser
of a point $p$ and $Q$ is the stabiliser of a line $\ell$ incident with $p$. Moreover, $\Cos(G;P,Q)$
is collinearly complete (resp. concurrently complete) if and only if $G=PQP$ (resp. $G=QPQ$).
\end{lemma}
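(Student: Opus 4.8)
The plan is to prove the two assertions separately: first the characterisation of flag-transitivity via coset geometries, then the ``Moreover'' statements linking the completeness properties to triple factorisations. For the first equivalence, I would begin by noting that flag-transitivity together with the non-degeneracy conditions forces $G$ to be transitive on points and on lines: given two points $p_1,p_2$, non-degeneracy supplies lines $\ell_1,\ell_2$ with $p_i\ast\ell_i$, and flag-transitivity carries the flag $(p_1,\ell_1)$ to $(p_2,\ell_2)$, hence $p_1$ to $p_2$; the argument for lines is symmetric. Fixing an incident pair $(p,\ell)$ and setting $P:=G_p$, $Q:=G_\ell$, orbit--stabiliser then gives $G$-equivariant bijections $p^{g}\mapsto Pg$ and $\ell^{g}\mapsto Qg$ from points onto $\Omega_P$ and lines onto $\Omega_Q$; these are well defined and injective because $p^{g_1}=p^{g_2}\iff g_1g_2^{-1}\in P\iff Pg_1=Pg_2$, and similarly for lines.

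The crux is checking that this pair of bijections preserves incidence, that is, $p^{g}\ast\ell^{h}\iff Pg\cap Qh\ne\varnothing$. For the backward implication, any $x\in Pg\cap Qh$ satisfies $Px=Pg$ and $Qx=Qh$, so $p^{g}=p^{x}$ and $\ell^{h}=\ell^{x}$, whence $p^{g}\ast\ell^{h}\iff p^{x}\ast\ell^{x}$, which holds since $(p,\ell)$ is a flag and $x$ acts as an automorphism. For the forward implication, flag-transitivity applied to the flag $(p^{g},\ell^{h})$ produces $t\in G$ with $p^{t}=p^{g}$ and $\ell^{t}=\ell^{h}$, whence $t\in Pg\cap Qh$. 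This yields $\mathcal{G}\cong\Cos(G;P,Q)$. Conversely, $\Cos(G;P,Q)$ is intrinsically flag-transitive under right multiplication: if $(Px,Qy)$ and $(Px',Qy')$ are flags, choose $a\in Px\cap Qy$ and $a'\in Px'\cap Qy'$; then right multiplication by $a^{-1}a'$ sends the first flag to the second.

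For the ``Moreover'' part, I would exploit that right multiplication acts transitively on $\Omega_P$ and is an incidence-preserving automorphism, so a pair of distinct points shares a line if and only if the translated pair does; this reduces collinear completeness to the single requirement that for every $x\notin P$ the points $P$ and $Px$ lie on a common line. A line $Qz$ is incident with $P$ exactly when $P\cap Qz\ne\varnothing$, i.e. $Qz=Qp_1$ for some $p_1\in P$, and it is incident with $Px$ exactly when $Qz=Qp_2x$ for some $p_2\in P$. Hence a common line exists if and only if $Qp_1=Qp_2x$ for some $p_1,p_2\in P$, which rearranges to $x\in PQP$. Since $P\subseteq PQP$ trivially, collinear completeness is equivalent to $PQP\supseteq G$, that is, $G=PQP$. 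The concurrent case follows by the dual argument, interchanging the roles of points and lines (equivalently of $P$ and $Q$), giving $G=QPQ$.

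I expect the main obstacle to be the bookkeeping in the incidence-preservation step and in the reduction ``$P$ and $Px$ are collinear $\iff x\in PQP$'': one must track left versus right cosets carefully and repeatedly invoke flag-transitivity (respectively, transitivity of right multiplication) in order to move an arbitrary incident pair back to the base flag. Once this correspondence is correctly established, the remaining observations, namely closure of $PQP$ under inversion and the inclusion $P\subseteq PQP$, are routine.
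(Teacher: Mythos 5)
Your proposal is correct and follows essentially the standard argument of Higman and McLaughlin, to whose Lemmas 1 and 3 the paper delegates this result without reproducing a proof. The coset bookkeeping is handled correctly throughout (in particular the equivalence $Qp_1=Qp_2x\iff x\in PQP$ and the reduction via point-transitivity to pairs of the form $P,Px$), so there is nothing substantive to add.
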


\begin{remark}\label{rem:coset-geom}
Note that in our study of triple factorisations of finite groups $G$, we naturally exclude the case
where $P\subseteq Q$ and $Q\subseteq P$.  These cases, in the language of triple factorisations,
give rise to either trivial triple factorisations (if $G$ equals one of $P$ or $Q$), or no triple
factorisation (if $P$ and $Q$ are both proper subgroups). As coset geometries, they are also rather
trivial as either each point is on just one line (if $P\subseteq Q$) or each line is incident with
just one point (if $Q \subseteq P$), and the incidence graph of the geometry is disconnected if $P,
Q$ are both proper subgroups.
\end{remark}

If $G=PQ$, then $|G:P|=|Q:P\cap Q|$ and $|G:Q|=|P:P\cap Q|$, so every point of $\Cos(G;P,Q)$ is
incident with all lines of $\Cos(G;P,Q)$, and every line is incident with all points, and hence the
incidence graph of $\Cos(G;P,Q)$ is complete bipartite.  Although, by
Lemma~\ref{prop:geom-flag-trans}, each triple factorisation naturally introduces a coset geometry,
not every coset geometry gives rise to a triple factorisation. For example, let
$G=\Sym(\{1,2,\ldots, 5\})$, $P=\langle (4,5) \rangle$ and $Q=\langle (1,2,3) \rangle$. Then $G\neq
PQP$ while $\Cos(G;P,Q)$ is a $G$-flag-transitive rank $2$ geometry.

\subsection{Buekenhout geometries with point-diameter at most $3$}

A collinearly complete rank $2$ geometry has an associated Buekenhout diagram $\Gamma$ with
point-diameter at most $3$ (see \cite{Buekenhout:1979fk}). This means that there are only five
possible values\footnote{We thank Dimitri Leemans for pointing out this fact.} for the canonical
parameters of $\Gamma$; the point-diameter $d_p$, gonality $g$, and line-diameter $d_\ell$.  It
turns out that $(d_p,g,d_\ell)$ can only be one of the following:

\begin{description}
\item[$\mathbf{(2,2,2)}$:] These geometries are simply the \emph{generalised di-gons} whose
incidence graphs are complete bipartite. For the automorphism group $G$, we have a degenerate
factorisation $G=PQ$.

\item[$\mathbf{(3,3,3)}$ and $\mathbf{(3,3,4)}$:] These two cases are flag-transitive linear spaces
  that we referred to earlier in the work of Higman and McLaughlin, and have been classified up to
  the \emph{one-dimensional affine} case \cite{BueDDKLS90}.  The parameters $(3,3,3)$ yield
  projective planes, while the case $(3, 3, 4)$ with point-order $s_p = 1$ corresponds to complete
  graphs and $2$-transitive group actions (which are known completely by the classification of the
  finite $2$-transitive permutation groups).

\item[$\mathbf{(3,2,3)}$ and $\mathbf{(3,2,4)}$:] These cases cover the collinearly complete
  geometries we are studying in this paper.
\end{description}

%
%

\subsection{The $m$-subspaces versus $k$-subspaces geometries $\Proj{m,k}{j}(V)$}\label{sec:proj-mkj}

 Let $n$, $m$, $k$ and $j$ be positive integers with $m,k<n$.  As in Section~\ref{ex:grassmannian},
 we denote by $\Gr_{m}(V)$, the set of all $m$-subspaces of $V$. Whenever $j$ satisfies
\begin{align}\label{eq:admis}
    \max\{0,m+k-n\}\le j\le \min\{m,k\}
\end{align}
we define the $(m,k,j)$-projective space $\Proj{m,k}{j}(V)$ as the rank $2$ geometry with point set
$\mathbb{P}:=\Gr_{m}(V)$ and line set $\mathbb{L}:=\Gr_{k}(V)$, and with incidence
relation $\ast^j$ given by $\dim(U\cap W)=j$, for $U\in \mathbb{P}$ and $W\in \mathbb{L}$.

Let $V$ be a vector space of dimension $n$ over a field $\F$. Then $V$ may be viewed as the row space
$\F^{n}$. Define the `$\perp$-map' (in words, `perp map') on subspaces $U$ of $V$ by
\begin{align}\label{eq:uperp}
U^{\perp}=\{v\in V\mid u\cdot v=0, \hbox{ for all } u\in U \}
\end{align}
where ``$\cdot$'' denotes the scalar product on $V$.  Here $\dim(U^{\perp})=n-\dim(U)$.

\begin{remark}\label{rem:proj-j}\
\begin{enumerate}[{ \ (a)}]
   \item Set $j_{0}=\min\{m,k\}$. In this case the incidence relation is symmetrised inclusion, and
     hence $\Proj{m,k}{j_{0}}(V)$ is the $\{m,k\}$-truncation of the projective geometry
     $\PG(V)$. In particular, $\Proj{1,2}{1}(V)$ with $\dim V\ge 3$ is the natural rank $2$ geometry
     for the projective space $\PG(V)$ mentioned in Section~\ref{ex:proj}.

  \item The flags of $\Proj{m,m}{m-1}(V)$ give rise to the Grassmannian geometry
    $\mathcal{A}_{m}(V)$ of Section~\ref{ex:grassmannian} via the map
      \begin{equation*}
      (U,W)\mapsto L(U\cap W,U+W)
      \end{equation*}
      for all flags $(U,W)$ of $\Proj{m,m}{m-1}(V)$. Note that, for every flag $(U,W)$ of
      $\Proj{m,m}{m-1}(V)$, $\dim(U\cap W)=m-1$. We have already seen that the Grassmannian geometry
      is a partial linear space and for $U,W\in \Gr_{m}(V)$ with $\dim(U\cap W)=m-1$, there exists
      exactly one Grassmannian line $L(U\cap W,U+W)$ incident with both $U$ and $W$. It is not
      difficult to see that $\Proj{m,m}{m-1}(V)$ is collinearly complete if and only if the
      collinearity graph of $\mathcal{A}_{m}(V)$ has diameter $2$. Since the diameter of
      $\mathcal{A}_{m}(V)$ is $\min\{m,n-m\}$ (see \cite[Lemma 2.5]{BlokCooperstein}), we see that
      $\Proj{m,m}{m-1}(V)$ is collinearly complete precisely when $m=2$ or $m=n-2$. This observation
      is in line with our results in Theorem~\ref{thm:linear-main}. Notice that when $\dim V=4$, the
      Grassmannian geometry is (famously) isomorphic to the \textit{Klein quadric}.
      \item If $m=k$, then the possible incidence relations $\ast^j$ form an association scheme known as the
\textit{Grassmann scheme} (see \cite[\S 16.3]{GodsilBook}).
\end{enumerate}

\end{remark}

\begin{proposition}\label{prop:proj-flag}
Let $V$ be a vector space of dimension $n\ge 2$ over a field $\F$, let $m,k<n$ and let $j$ be a
non-negative integer satisfying $\max\{0,m+k-n\}\le j\le \min\{m,k\}$. Then
\begin{enumerate}
   \item[(a)] $\Proj{m,k}{j}(V)$ has at least one flag and $\GL(V)$ acts transitively on its flags.
  \item[(b)] $\Proj{m,k}{j}(V)$ is collinearly complete if and only if
    $\Proj{\bar{m},\bar{k}}{\bar{j}}(V)$ is collinearly complete (where $\bar{m}= n-m, \bar{k}=n-k,
    \bar{j} =n-m-k+j$). 
\end{enumerate}
\end{proposition}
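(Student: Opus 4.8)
The plan is to dispatch both parts with elementary linear algebra, reducing part (b) to an isomorphism-invariance statement once the perp map is in hand. For part (a), I would first exhibit a single flag to see that the flag set is nonempty. Fix a basis $e_1,\dots,e_n$ of $V$ and set $U=\langle e_1,\dots,e_m\rangle$ and $W=\langle e_1,\dots,e_j,e_{m+1},\dots,e_{m+k-j}\rangle$. The admissibility condition \eqref{eq:admis} guarantees both that $j\le\min\{m,k\}$ (so these spaces have dimensions $m$ and $k$ and the listed generators are genuinely distinct basis vectors) and that $m+k-j\le n$ (so the index $m+k-j$ does not exceed $n$); one then reads off $U\cap W=\langle e_1,\dots,e_j\rangle$, a $j$-subspace, giving a flag. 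For transitivity, given two flags $(U,W)$ and $(U',W')$ I would build adapted bases: choose a basis of $U\cap W$, extend it inside $U$ and separately inside $W$, note that the union is a basis of $U+W$ because $\dim(U+W)=m+k-j$ by the dimension formula, and then extend to a basis of all of $V$. Carrying out the same construction for $(U',W')$ and mapping the first basis onto the second vector-by-vector produces an element of $\GL(V)$ sending $U\mapsto U'$ and $W\mapsto W'$. The only place the hypotheses enter is the count $\dim(U+W)=m+k-j\le n$, which is precisely the lower bound $j\ge\max\{0,m+k-n\}$ of \eqref{eq:admis}.

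For part (b), the key object is the perp map \eqref{eq:uperp} attached to the standard scalar product on $V=\F^n$. Since that form has the identity Gram matrix and is therefore nondegenerate, $X\mapsto X^\perp$ is an inclusion-reversing involution on subspaces with $\dim X^\perp=n-\dim X$ and $(X+Y)^\perp=X^\perp\cap Y^\perp$. I would use these to check that perp carries $\Gr_m(V)$ bijectively onto $\Gr_{\bar{m}}(V)$ and $\Gr_k(V)$ onto $\Gr_{\bar{k}}(V)$, and that it transports incidence correctly: for a point $U$ and line $W$ one has $\dim(U^\perp\cap W^\perp)=\dim((U+W)^\perp)=n-\dim(U+W)=n-m-k+\dim(U\cap W)$, so $\dim(U\cap W)=j$ if and only if $\dim(U^\perp\cap W^\perp)=\bar{j}$. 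Thus perp is a geometry isomorphism, in the sense of Section~\ref{sec:rank2-geom}, from $\Proj{m,k}{j}(V)$ onto $\Proj{\bar{m},\bar{k}}{\bar{j}}(V)$. Before invoking this I would verify that $\bar{j}$ is admissible for the pair $(\bar{m},\bar{k})$, i.e.\ that $\max\{0,\bar{m}+\bar{k}-n\}\le\bar{j}\le\min\{\bar{m},\bar{k}\}$; substitution gives $\bar{j}-(\bar{m}+\bar{k}-n)=j$ and $\min\{\bar{m},\bar{k}\}-\bar{j}=\min\{m,k\}-j$, both nonnegative by \eqref{eq:admis}, so the target really is a geometry (with flags, by part (a) applied to $(\bar{m},\bar{k},\bar{j})$). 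Since collinear completeness is a geometry isomorphism invariant, as noted after Definition~\ref{defn:col-con}, the equivalence in (b) follows at once.

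Neither part is deep; the work is bookkeeping. I expect the most error-prone step to be the admissibility verification for $(\bar{m},\bar{k},\bar{j})$ together with the sign-chasing in the identity $\dim(U^\perp\cap W^\perp)=\bar{j}$, since one must confirm that the perp-image lands in the intended geometry $\Proj{\bar{m},\bar{k}}{\bar{j}}(V)$ rather than a neighbour with a shifted intersection parameter. I anticipate no characteristic obstruction: the standard dot product is nondegenerate over every field, so all the perp identities used above hold unconditionally.
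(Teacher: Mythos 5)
Your proposal is correct and follows essentially the same route as the paper: exhibit the explicit flag $(\langle e_1,\dots,e_m\rangle,\ \langle e_1,\dots,e_j,e_{m+1},\dots,e_{m+k-j}\rangle)$, prove flag-transitivity of $\GL(V)$, and realise the perp map as a geometry isomorphism onto $\Proj{\bar{m},\bar{k}}{\bar{j}}(V)$ for part (b). The only cosmetic difference is that you obtain transitivity by matching adapted bases of the two flags, whereas the paper composes three group elements (one carrying $U$ to $U'$, one aligning the $j$-dimensional intersections inside $U'$, one aligning $W$ modulo that intersection); your explicit admissibility check for the triple $(\bar{m},\bar{k},\bar{j})$ is a small extra verification that the paper leaves implicit.
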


\begin{proof}
\noindent\textbf{(a)}\ Let $\{e_{1},\ldots, e_{n}\}$ be a basis for $V$, and take $U=\langle e_{1},\ldots, e_{m}
  \rangle$. Since $m+k-j\le n$, there exists a $k$-subspace $W$ of $V$ intersecting $U$ in a
  subspace of dimension $j$; for example, $W:=\langle e_{1},\ldots, e_{j},$ $e_{m+1},\ldots,
  e_{k+m-j} \rangle$. So $\Proj{m,k}{j}(V)$ has flags.

 Suppose $f:=(U,W)$ and $f':=(U',W')$ are flags of $\Proj{m,k}{j}(V)$ . Then $\dim(U\cap
 W)=\dim(U'\cap W')=j$. Since $G:=\GL(V)$ is transitive on the set $\Gr_{m}(V)$ of all
 $m$-subspaces of $V$, there exists $x\in G$ such that $U^{x}=U'$. Similarly, since $G_{U'}$ is
 transitive on $\Gr_j(U')$, there also exists $y\in G_{U'}$ such that $(U\cap
 W)^{xy}=U'\cap W'$. Moreover, there is $z\in G_{\{U',U'\cap W'\}}$ which maps $W^{xy}/(U'\cap W')$
 to $W'/(U'\cap W')$. Therefore, $g:=xyz\in G$ maps $f$ to $f'$ and $(U\cap W)^{g}=U'\cap W'$.

\noindent\textbf{(b)}\ Define a map $f$ from the elements of $\Proj{m,k}{j}(V)$ to the elements of
$\Proj{\bar{m},\bar{k}}{\bar{j}}(V)$ by $f(U):=U^\perp$ for all $U\in \Gr_{m}(V)\cup \Gr_{k}(V)$. It
is straight-forward to prove that $f$ is a bijection, and maps $\Gr_m(V)$ to $\Gr_{\m}(V)$ (`points
to points') and maps $\Gr_k(V)$ to $\Gr_{\bar{k}}(V)$ (`lines to lines'). So in order to show that
$f$ is a geometry isomorphism, we only need to show that incidence is preserved. Suppose we have
$U\in\Gr_{m}(V)$ and $W\in\Gr_{k}(V)$ which are incident in $\Proj{m,k}{j}(V)$; that is, $\dim U\cap
W=j$. Since $U^\perp \cap W^\perp= (U+W)^\perp $, we have
\[
\dim(U^\perp\cap W^\perp)=2k-\dim( U+W)=2k-\dim U-\dim W+\dim(U\cap W)=k-m+j=\bar{j}.\
\]
Therefore, $U^\perp$ and $W^\perp$ are incident in $\Proj{\bar{m},\bar{k}}{\bar{j}}(V)$, and we have
shown that $f$ is a geometry isomorphism. It then follows that $\Proj{m,k}{j}(V)$ is collinearly
(respectively, concurrently) complete if and only if $\Proj{\bar{m},\bar{k}}{\bar{j}}(V)$ is
collinearly (respectively, concurrently) complete.
\end{proof}

Proposition~\ref{prop:proj-flag}~(b) allows us, for example, to reduce to the case
$m\le n/2$, when studying the collinear completeness of  $\Proj{m,k}{j}(V)$.

\subsection{The subspace--bisection geometries $\Proj{m,k}{J}(V)$}

For this geometry, denoted by $\Proj{m,k}{J}(V)$, the point set $\mathbb{P}$ is the set $\Gr_{m}(V)$
of all $m$-subspaces of $V$, the line set $\mathbb{L}$ is the set of all bisections
$\{V_{1},V_{2}\}$ of $V$ such that $V=V_{1}\oplus V_{2}$ and $\dim(V_{1}) =\dim(V_{2})=k$, and
incidence between $U\in \mathbb{P}$ and $\{V_{1},V_{2}\}\in \mathbb{L}$ is given by $(\dim(U\cap
V_{1}),\dim(U\cap V_{2}))= (k_{1},k_{2})$ or $(k_{2},k_{1})$. The authors have not found any
literature where this flag-transitive geometry for $\GL(V)$ has been studied. We study these
goemetries because
\begin{itemize}
\item they are an infinite family of flag-transitive geometries for the general linear groups,
\item they are a large class of rank $2$ geometries with canonical parameters $(3,2,3)$ and $(3,2,4)$, and
\item they provide triple factorisations for $\GL(V)$ for a large range of parameters ($m$, $k$, $J$, and $q$).
\end{itemize}

Example~\ref{ex:ssp-bis} describes a family of such geometries which occurs naturally in incidence
geometry, namely the cases where $m=k=2$ can be interpreted as geometries related to the Klein
quadric, and in particular its collinearity graph. The \emph{collinearity graph} of a point/line
geometry has as vertices the points of the geometry, and two points are adjacent if they are
incident with a common line.

\begin{example}\label{ex:ssp-bis}
Suppose $m=k=2$, so the points are the $2$-subspaces of $V(4,\mathbb{F})$, and the lines are the
bisections of this vector space into complementary $2$-subspaces. A complementary pair of
$2$-subspaces can be viewed as a pair of disjoint lines of the associated projective space
$\PG(3,\F)$, and so via the Klein correspondence, each bisection corresponds to a pair of
non-collinear points of the Klein quadric $\mathsf{Q}^+(5,\F)$. Suppose now that $\F$ is finite.
Then a pair of non-collinear points of the Klein quadric spans a non-degenerate line of $+$-type in
$\mathsf{Q}^+(5,\F)$, and the perps of such lines are $3$-dimensional hyperbolic quadric sections.
Thus the lines of our rank $2$ geometry correspond to the $3$-dimensional hyperbolic quadric
sections of $\mathsf{Q}^+(5,\F)$, and the points are simply the points of $\mathsf{Q}^+(5,\F)$.
There are ostensibly four different rank $2$ geometries corresponding to different incidence
relations, described in Table~\ref{tbl:ssp-bis}.

\begin{table}[H]
\begin{tabular}{p{9cm} | p{6cm}}
\hline
Incidence relation: point $P$ versus hyperbolic quadric section $H$ of $\mathsf{Q}^+(5,\F)$ &Collinearity graph\\
\hline
$P+H$ is a non-degenerate hyperplane& Complete\\
$P$ does not lie in $H^\perp$ and $P^\perp\cap H^\perp$ is a point of $\mathsf{Q}^+(5,\F)$& Complete\\
$P$ lies in $H^\perp$&Complement of the collinearity graph of $\mathsf{Q}^+(5,\F)$ (which is strongly regular) \\
$P$ lies in $H$& Complete\\
\hline
\end{tabular}
\caption{The four incidence relations for Example~\ref{ex:ssp-bis} and their collinearity graphs.}\label{tbl:ssp-bis}
\end{table}

\end{example}

%
%

\subsection{Thin geometries and groups with a $BN$-pair}

We follow the standard notation introduced by Tits \cite{Titsbook}. A \textit{$BN$-pair} in a group
$G$ is a pair of subgroups $B$ and $N$ such that $G=\langle B,N\rangle$, the intersection $T=B\cap
N$ is a normal subgroup of $N$, the factor group $W=N/T$ is generated by involutions
$S:=\{s_1,\ldots, s_n\}$, and the following holds for all $w\in W$ and all $s\in S$:
$$sBw\subset BwB\cup BswB\quad\text{ and }\quad sBs\ne B.$$ The subgroup $B$ is called the
\textit{standard Borel subgroup}, and any $G$-conjugate of $B$ is a \textit{Borel subgroup} of
$G$. The group $W$ is the \textit{Weyl group}, and $T$ is the \textit{Cartan subgroup}. The
subgroups containing $B$ are the \textit{standard parabolic subgroups} of $G$. If $J\subset
\{1,\ldots,n\}$, then the group $W_J:=\langle s_j:j\in J\rangle$ is a \textit{Weyl-parabolic}, and
we write $N_J$ for the subgroup of $N$ containing $T$ such that $N_J/T = W_J$. The \textit{Bruhat
  Decomposition Theorem} states that $G=BNB$, and a similar property holds for parabolic subgroups:
for each standard parabolic subgroup $P$, there exists a unique subset $J$ of $\{1,\ldots,n\}$ such
that $P=BN_JB$.  We will often write $P_J$ for the standard parabolic $BN_JB$, and indeed abuse
notation and write $P_J=BW_JB$.

\begin{lemma}[{\cite[pp. 28/29]{Bourbaki}}]\label{Bourbaki}
Let $K$ and $L$ be subsets of $\{1,..,n\}$. Then there is a bijection between the double coset
spaces $W_K \backslash W / W_L$ and $P_K \backslash G /P_L$ given by
$$
\Lambda \to B\Lambda B,\quad \text{for all }\Lambda\in W_K \backslash W / W_L.
$$
\end{lemma}

In this paper, we are only interested in the cases where the Weyl group is of \textit{type}
$\mathsf{A}_n$. The $\mathsf{B}_n/\mathsf{C}_n$ case will be explored in a forthcoming paper
\cite{ABP2010}. A parabolic triple factorisation for the Weyl group gives rise to a triple
factorisation for $G$.

\begin{lemma}\label{Weylfactorisation}
Let $G$ be a group with a $(B,N)$-pair and let $A_1$ be a standard parabolic subgroup of $G$.  Let
$W$ be the Weyl group $N/(B\cap N)$ of $G$.  Let $W_1=(A_1\cap N)/(B\cap N)$ and suppose there
exists a subgroup $W_2\le W$ such that $W=W_1W_2W_1$.  Then $G=A_1N_2A_1$ where $N_2$ is the
preimage of $W_2$ in $N$.
\end{lemma}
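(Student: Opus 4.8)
The plan is to reduce the identity $G=A_1N_2A_1$ to the hypothesised Weyl-group factorisation $W=W_1W_2W_1$ by pushing everything through the Bruhat decomposition $G=BNB=\bigsqcup_{w\in W}BwB$. Write $T:=B\cap N$, so that $W=N/T$ and, by hypothesis, $N_2$ is the full preimage of $W_2$ under the quotient map $N\to W$. Since $A_1$ contains $B$ it is a union of Bruhat cells, and a short check shows $\{w\in W\mid BwB\subseteq A_1\}=(A_1\cap N)/T=W_1$: if $w=nT$ with $n\in A_1\cap N$ then $BnB\subseteq BA_1B=A_1$, while conversely a representative of any cell $BwB\subseteq A_1$ lies in $A_1\cap N$. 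Hence $A_1=BW_1B=\bigcup_{u\in W_1}BuB$, in agreement with the convention $P_J=BW_JB$ recorded above.

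First I would describe the Bruhat cells making up $BN_2B$. Because $T\le B$, for every $n\in N_2$ the cell $BnB$ depends only on the image $nT\in W_2$, and conversely every $v\in W_2$ is realised by some $n\in N_2$ (as $N_2$ is the full preimage). Thus $BN_2B=\bigcup_{v\in W_2}BvB$, where from now on $v$ abusively denotes a chosen representative of $v$ in $N$. Using $B\le A_1$, so that $A_1B=BA_1=A_1$, the products absorb the surrounding copies of $B$ and I obtain
\[
A_1N_2A_1=A_1(BN_2B)A_1=\bigcup_{v\in W_2}A_1(BvB)A_1=\bigcup_{v\in W_2}A_1\,v\,A_1=\bigcup_{v\in W_2}P_J\,v\,P_J.
\]

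The key step is to rewrite each parabolic double coset $P_Jv P_J$ in Weyl-group terms. By Lemma~\ref{Bourbaki} (with $K=L=J$), the map $\Lambda\mapsto B\Lambda B$ is a bijection $W_1\backslash W/W_1\to P_J\backslash G/P_J$; applying it to the double coset $W_1vW_1$, which visibly contains $v$, identifies $P_Jv P_J=B(W_1vW_1)B$. Substituting this into the union and collating the indexing set yields
\[
A_1N_2A_1=\bigcup_{v\in W_2}B\,(W_1vW_1)\,B=B\Big(\bigcup_{v\in W_2}W_1vW_1\Big)B=B\,(W_1W_2W_1)\,B.
\]
Invoking the hypothesis $W=W_1W_2W_1$ together with $G=BWB$ then gives $A_1N_2A_1=BWB=G$, as required.

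I expect the only genuine obstacle to be the careful handling of coset representatives together with the identification $P_Jv P_J=B(W_1vW_1)B$, that is, the parabolic refinement of the Bruhat relations. This is precisely what Lemma~\ref{Bourbaki} supplies, so the work reduces to checking that the single $(P_J,P_J)$-double coset $B(W_1vW_1)B$ really contains the chosen representative of $v$ and that the union over $v\in W_2$ assembles correctly into $B(W_1W_2W_1)B$. The remaining manipulations, namely $BN_2B=\bigcup_{v\in W_2}BvB$ and the absorption $A_1B=A_1=BA_1$, are routine.
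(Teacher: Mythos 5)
Your proof is correct, but it takes a genuinely different route from the paper's. The paper argues by a three-line containment: it lifts $W=W_1W_2W_1$ to $N=N_1N_2N_1$ (possible because $T=B\cap N$ lies in each preimage $N_i$), inserts copies of $B$ to get $N\subseteq N_1BN_2BN_1$, and concludes $G=BNB\subseteq BN_1B\,N_2\,BN_1B=A_1N_2A_1$ using only the Bruhat decomposition and the identity $A_1=BN_1B$; Lemma~\ref{Bourbaki} is never invoked. You instead compute $A_1N_2A_1$ exactly as a union of Bruhat cells, using Lemma~\ref{Bourbaki} to identify each parabolic double coset $A_1vA_1$ with $B(W_1\bar vW_1)B$, and then assemble $A_1N_2A_1=B(W_1W_2W_1)B$. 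The paper's argument is shorter and more elementary, needing nothing beyond $1\in B$ and the absorption $BN_1B=A_1$. Your argument is heavier but buys more: the identity $A_1N_2A_1=B(W_1W_2W_1)B$ holds without assuming $W=W_1W_2W_1$, so it gives the equivalence $G=A_1N_2A_1$ if and only if $W=W_1W_2W_1$, i.e., the converse of the lemma as well. All the individual steps you flag as needing care (the cell description of $A_1$ and of $BN_2B$, the identification $P_JvP_J=B(W_1\bar vW_1)B$ via disjointness of double cosets, and the collation of the union) are standard $BN$-pair facts and check out.
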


\begin{proof}
Let $N_1$ and $N_2$ be the preimages of $W_1$ and $W_2$ in $N$, respectively. Then $N_1=A_1\cap N$,
and $N=N_1N_2N_1$. It follows that
$N\subseteq N_1BN_2BN_1$ and so
$$G=BNB \subseteq BN_1B N_2 BN_1B=A_1N_2A_1.$$
Therefore, $G=A_1N_2A_1$.
\end{proof}

Thus, we have the following corollary of Lemma \ref{Bourbaki}. We point out here that the proofs of
the result below will be in terms of the group $\GL(V)$, however, they equally apply to $\SL(V)$
since the $BN$-pair counterpart for $\SL(V)$ has the same properties as those needed for
$\GL(V)$. See \cite[Section 9.5]{Garrett:1997fk}. In view of Corollary~\ref{cor:c11-orbits}, we say
that an $m$-subspace $U$ and a $k$-subspace $W$ are \emph{$j$-related} if $\dim(U\cap W)=j$. We use
this notion in particular in Section~\ref{sec:pre} in the case $m=k$.

\begin{corollary}\label{cor:c11-orbits}
Let $G$ be $\GL(V)$ or $\SL(V)$.  Let $P_m$ be the stabiliser in $G$ of an $m$-subspace $U$ of a
vector space $V$. Then the orbits of $P_m$ on $k$-spaces are characterised by their dimension $j$ of
intersection with $U$, where $\max\{0,m+k-n\}\le j\le\min\{m,k\}$.
\end{corollary}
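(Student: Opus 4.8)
The plan is to deduce the result from the Weyl-group double coset correspondence of Lemma~\ref{Bourbaki} together with a counting argument, exactly as the phrase ``corollary of Lemma~\ref{Bourbaki}'' suggests. Both $\GL(V)$ and $\SL(V)$ carry a $BN$-pair whose Weyl group is $W=S_n$, generated by the simple reflections $s_1,\ldots,s_{n-1}$; since scalars fix every subspace, the two groups have identical orbits on $k$-spaces and it suffices to treat $G=\GL(V)$. I would fix the standard flag so that $P_m$ is the maximal standard parabolic attached to $J_m=\{1,\ldots,n-1\}\setminus\{m\}$, whose Weyl-parabolic is the Young subgroup $W_m=W_{J_m}=S_m\times S_{n-m}$ stabilising the partition $\{1,\ldots,m\}\sqcup\{m+1,\ldots,n\}$, and similarly let $P_k$ be the stabiliser of a fixed $k$-space $W_0$, with Weyl-parabolic $W_k=S_k\times S_{n-k}$.

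First I would translate the orbit problem into double cosets. The $k$-spaces form a single $G$-orbit and may be identified with the cosets of $P_k$ via $g\mapsto W_0^{\,g}$, so the $P_m$-orbits on $k$-spaces correspond bijectively to the double cosets $P_m\backslash G/P_k$ (equivalently $P_k\backslash G/P_m$, the two spaces having equal cardinality). By Lemma~\ref{Bourbaki}, these are in bijection with the Weyl-group double cosets $W_m\backslash W/W_k=(S_m\times S_{n-m})\backslash S_n/(S_k\times S_{n-k})$.

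Next I would count these Weyl double cosets. Identifying $S_n/(S_k\times S_{n-k})$ with the family of $k$-subsets of $\{1,\ldots,n\}$ (the coset of $w$ corresponding to $w(\{1,\ldots,k\})$), the double cosets become the orbits of $S_m\times S_{n-m}$ on $k$-subsets, and two $k$-subsets lie in the same orbit precisely when they meet $\{1,\ldots,m\}$ in the same number $j$ of points. Hence the double cosets are indexed by the integers $j$ with $\max\{0,m+k-n\}\le j\le\min\{m,k\}$, the upper bound being $j\le\min\{m,k\}$ and the lower bound coming from $k-j\le n-m$.

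Finally I would match this count to the geometric invariant. The integer $\dim(U\cap W)$ is plainly constant on each $P_m$-orbit (as $P_m$ fixes $U$), and by the explicit flag-construction in Proposition~\ref{prop:proj-flag}(a) every value of $j$ in the admissible range is realised by some $k$-space. Thus the assignment $(\text{orbit})\mapsto\dim(U\cap W)$ is a well-defined surjection from the set of $P_m$-orbits onto the admissible values of $j$; since the preceding paragraph shows these two finite sets have equal cardinality, the surjection is a bijection, and the orbits are characterised by $j$. The one step requiring care is the combinatorial count of the double cosets, in particular pinning down the boundary values $\max\{0,m+k-n\}$ and $\min\{m,k\}$; the rest is bookkeeping. (As an alternative avoiding the $BN$-pair machinery, one can instead specialise the argument of Proposition~\ref{prop:proj-flag}(a) to $U=U'$ to show directly that $P_m$ is transitive on $\{W:\dim(U\cap W)=j\}$ for each admissible $j$.)
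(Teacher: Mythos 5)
Your argument is essentially the paper's own: both identify the $P_m$-orbits on $k$-spaces with the double cosets $P_k\backslash G/P_m$, apply Lemma~\ref{Bourbaki} to pass to $W_k\backslash W/W_m$ in the Weyl group $S_n$, and recognise these as the orbits of $S_m\times S_{n-m}$ on $k$-subsets, indexed by the intersection size $j$ in the range $\max\{0,m+k-n\}\le j\le\min\{m,k\}$ (your closing counting step, matching the number of double cosets to the number of realised values of the invariant $\dim(U\cap W)$, just makes explicit what the paper asserts more tersely). The only caveat is that your one-line reduction from $\SL(V)$ to $\GL(V)$ --- ``scalars fix every subspace'' --- is not by itself a proof, since $\GL(V)_U$ need not equal $\SL(V)_U\cdot Z$ when $\det$ fails to be surjective on that product; the paper instead observes that the identical $BN$-pair argument applies verbatim to $\SL(V)$, which has the same Weyl group and the same parabolic double coset structure.
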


\begin{proof}
As $G$ is transitive on $\Gr_m(V)$, we may assume that $P_m$ is the standard parabolic
$P_M$ with $M=\{1,\dots,m\}$.  Let $\mathcal{O}=\Gr_k(V)$, and let $P_k=P_K$ be the
standard parabolic stabilising a $k$-subspace corresponding to a $k$-subset $K$ of $\{1,\dots,n\}$.
Now the elements of $\mathcal{O}$ are in bijection with the left cosets of $P_K$ in $G$, and so the
orbits of $P_M$ on $\mathcal{O}$ are determined by the double cosets in $P_K\backslash G/P_M$. (In
fact, each orbit is the set of left cosets contained in a certain double coset of $P_K\backslash
G/P_M$.)  The Weyl group $W$ of $G$ is the symmetric group $S_n$; the Weyl-parabolic $W_M$
corresponding to $P_M$ is the full stabiliser in $W$ of the $m$-subset $M$ of $\{1,\ldots,n\}$, and
the Weyl parabolic $W_K$ corresponding to $P_K$ is the stabiliser of $K$ in $W$.  Now by
Lemma~\ref{Bourbaki}, the double cosets in $P_K\backslash G/P_M$ are in 1--1 correspondence with the
double cosets in $W_K\backslash W/W_M$. In turn, the double cosets in $W_K\backslash W/W_M$
correspond to the $W_M$-orbits on $k$-subsets of $\{1,\dots,n\}$, and these are precisely the
families of $k$-subsets that intersect $M$ in a set of some fixed size.  The possible sizes of the
intersections with $M$ of a $k$-subset are identical with the possible intersection dimensions of
$k$-subspaces with $U$, namely the integers $j$ satisfying $\max\{0, m + k - n\}\le j\le\min\{m,
k\}$. Thus the $P_M$-orbits on $\mathcal{O}$ are precisely the subsets of $\mathcal{O}$ consisting
of the $k$-subspaces meeting $U$ in a subspace of some fixed dimension.
\end{proof}

\section{Proof of Theorem~\ref{thm:linear-main} }\label{sec:parabolic}

The proof of our first main result will rely in part on triple factorisations of the Weyl group
$S_n$ of $\GL(n,q)$.  In Example~\ref{ex:con-nodes} below, we construct flag-transitive geometries
that include examples which are concurrently complete. In particular, some of these geometries are
$2$-designs that are not symmetric $2$-designs.  The points and lines of these geometries are
$m$-subsets and $k$-subsets of an $n$-set, respectively.  By replacing all these subsets by their
complements, if necessary, we lose no generality by assuming that $m$ is at most $n/2$.

\begin{example}\label{ex:con-nodes}
Let $G=S_{n}$ acting on $\Omega=\{1,\dots,n\}$, and suppose that $1\le m\le n/2$ and $1\le k<n$.
Let $\mathbb{P}$ be the set of $m$-subsets of $\Omega$, and let $\mathbb{L}$ be the set of
$k$-subsets of $\Omega$.  Let $j$ be a non-negative integer such that $\max\{0,m+k-n\}\le
j\le\min\{m,k\}$.  We stipulate that an element of $\mathbb{P}$ is incident with an element of
$\mathbb{L}$ if they intersect in $j$ elements, and we thereby obtain a rank $2$ geometry
$\mathcal{X}$. (The reader may verify that there exists a flag.)  Now consider the element
$M_1:=\{1,2,\ldots,m\}$ of $\mathbb{P}$ and another $m$-subset $M_2$ distinct from $M_1$. The
$G$-orbits on pairs of $m$-subsets correspond to the possible sizes $t$ of $M_1\cap M_2$, namely
$0\le t\le m-1$. In order for $\mathcal{X}$ to be collinearly complete, there must be enough room,
for each such $t$, for there to be a $k$-subset $\ell$ meeting both $M_1$ and $M_2$ in $j$ elements,
as Figure \ref{fig:enoughroom} demonstrates.

%
%
\begin{tabular}{m{0.4\textwidth}m{0.5\textwidth}}
\begin{minipage}{0.4\textwidth}
\begin{figure}[H]\footnotesize
  \caption{A line $\ell$ incident with the points $M_1$ and $M_2$, where $|M_1 \cap M_2| =
    t$.}\label{fig:enoughroom}

  \centering
\begin{tikzpicture}[scale = 0.7]

    \begin{scope}[shift={(3cm,-5cm)}, fill opacity=1]
        \draw  (0,0) circle (2cm);
        \draw  (1.5,-2) circle (2cm);
        \draw (0:3cm) circle (2cm);
        \node at (1.5,0.35) {$t-i$};
        \node at (1.5,-0.8) {$i$};
        \node at (0.5, -1.2) {$j-i$};
         \node at (2.5, -1.2) {$j-i$};
         \node at (-0.4,0.8) {$m-t-j+i$};
         \node at (3.3,0.8) {$m-t-j+i$};
         \node at (1.5,-2.5) {$k-2j+i$};

        \node at (-2.6,0) {$M_1$};
        \node at (5.6,0) {$M_2$};
        \node at (3.7,-3.2) {$\ell$};

    \end{scope}
\end{tikzpicture}
\end{figure}
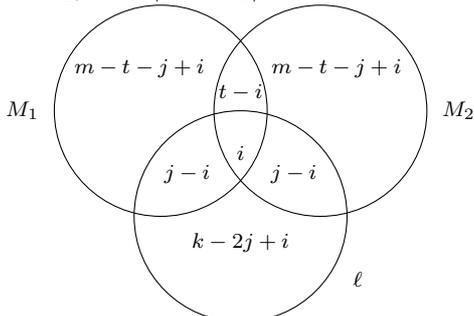
\end{minipage}&
The set $\ell$ exists if and only if, for every possible $t$, we can choose $i=|M_1\cap M_2\cap
\ell|$ such that the complement of $M_1\cup M_2$ contains at least $k-2j+i$ elements, that is to
say, $0\le k-2j+i\le n- (|M_1|+|M_2|-|M_1\cap M_2|) =n-2m+t.$ We claim that this is possible for
every possible $t$-value, or equivalently $\mathcal{X}$ is collinearly complete, if and only if
$0\le k-2j\le n-2m$. The case $t=0$ affirms that if $0\le k-2j\le n-2m$ fails, then $\mathcal{X}$ is
not collinearly complete.  The converse will follow from Lemma \ref{lem:themdiagonals}(iii) below.
\end{tabular}
\end{example}

We will also be using the following lemma, in particular, parts (iii) and (iv), to prove Theorem
\ref{thm:linear-main}.

\begin{lemma}\label{lem:themdiagonals}
Let $N$ be the set $\{1,\ldots,n\}$, let $1\le m\le n/2$, let $\max\{0,m+k-n\}\le j\le \min\{m,k\}$,
and let $1\le k<n$. If $M_1$ and $M_2$ are distinct $m$-subsets of $N$, and $2j\le k$, then there
exists a $(n-2m+2j)$-subset $P$ intersecting $M_1$ and $M_2$ in $j$-subsets such that:
\begin{enumerate}[(i)]
\item If $0\le k-2j\le n-2m$, then there is a $k$-subset $K$ of $P$ such that
$|K\cap M_1|=|K\cap M_2|=j$;
\item If $k-2j>n-2m$, then there is a partition of $N\backslash P$ into $(k-2j)-(n-2m)$ mutually
  disjoint subsets, each of size at least $2$.
\end{enumerate}
\end{lemma}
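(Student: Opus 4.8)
The plan is to work cell by cell. Writing $t := |M_1 \cap M_2|$ (so $0 \le t \le m-1$ since $M_1 \ne M_2$), I would partition $N$ into the four cells cut out by $M_1$ and $M_2$: the common part $A = M_1 \cap M_2$ of size $t$, the two private parts $B = M_1 \setminus M_2$ and $C = M_2 \setminus M_1$ each of size $m - t$, and the outside $D = N \setminus (M_1 \cup M_2)$ of size $n - 2m + t$, which is nonnegative precisely because $m \le n/2$. Every candidate subset is then specified by how many elements it draws from each cell, and the whole argument reduces to checking that the required cell counts are simultaneously nonnegative and bounded above by the cell sizes.

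For the set $P$, I would take $i$ elements from $A$, $j-i$ from each of $B$ and $C$, and $n-2m+i$ from $D$, where $i$ is a free parameter. This automatically gives $|P \cap M_1| = |P \cap M_2| = j$ and $|P| = n - 2m + 2j$. The cell constraints force $\max\{0,\, j - m + t\} \le i \le \min\{t, j\}$, and I would verify that this interval is nonempty using only $j \le m$ (from $j \le \min\{m,k\}$) and $t \le m-1$; any integer $i$ in it then yields a valid $P$, disposing of the existence claim in the preamble.

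For part (i), under the hypothesis $0 \le k - 2j \le n - 2m$, I would build $K$ by forcing it to contain all of $P \cap (M_1 \cup M_2)$ --- which has size $2j - i$ --- and filling out to size $k$ with elements of $P \cap D$. Since elements of $D$ lie outside both $M_1$ and $M_2$, this keeps $|K \cap M_1| = |K \cap M_2| = j$. Feasibility amounts to $0 \le k - (2j - i) \le n - 2m + i$; the lower bound follows from $2j \le k$, and the upper bound is exactly the hypothesis $k - 2j \le n - 2m$. For part (ii), under $k - 2j > n - 2m$, I note $|N \setminus P| = 2m - 2j = 2(m-j)$ and set $r := (k-2j) - (n-2m) \ge 1$; partitioning $N \setminus P$ into $r$ blocks each of size at least $2$ is possible exactly when $2r \le 2(m-j)$, i.e. $r \le m - j$, which rearranges to $j \ge m + k - n$ --- precisely the remaining hypothesis $j \ge \max\{0, m+k-n\}$.

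The routine calculations are the interval-nonemptiness and the two feasibility inequalities, so the only real obstacle is organisational: confirming that a single choice of $P$ (equivalently, of $i$) serves both parts, and that the existence of $P$ does not secretly impose a constraint beyond those listed. Because the hypotheses of (i) and (ii) are mutually exclusive, only one of the two sub-arguments is ever invoked for a given parameter set, so I would fix any admissible $i$ at the outset and then split into the two cases.
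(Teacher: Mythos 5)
Your proposal is correct and follows essentially the same route as the paper: construct $P$ by prescribing how many elements it takes from each of the four cells cut out by $M_1$ and $M_2$, then for part (i) extend $P\cap(M_1\cup M_2)$ inside $P$ using elements outside $M_1\cup M_2$, and for part (ii) pair up the $2(m-j)$ elements of $N\setminus P$ and merge blocks, with feasibility reducing to $k-2j\le n-2m$ and $j\ge m+k-n$ respectively. The only difference is organisational: your free parameter $i$ ranging over $\max\{0,\,j-m+t\}\le i\le\min\{t,j\}$ subsumes the paper's three-way case split on $j$ versus $m-t$ and $t$, which corresponds to the particular choices $i=0$, $i=j$ and $i=t$.
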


\begin{proof}
Let $t:=|M_1\cap M_2|$.
Without loss of generality, we may assume that
\begin{align*}
M_1&:=\{1,2,\ldots,m\},\\
M_2&:=\{m-t+1,m-t+2,\ldots,2m-t\}.
\end{align*}

Suppose first that $j \le m-t$. Then let
\begin{align*}
P_1&=\{1,\ldots,j\}\subseteq M_1\setminus M_2,\\
P_2&=\{2m-t-j+1,\ldots, 2m-t\}\subseteq M_2\setminus M_1.
\end{align*}
and note that $|P_1\cup P_2|=2j$.  In the complement of $M_1\cup M_2$, there are $n-2m+t$ elements,
and so we can find $n-2m$ elements to adjoin to $P_1\cup P_2$ to create an $(n-2m+2j)$-subset 
intersecting $M_1$ and $M_2$ in $j$-subsets. In particular, $P:=\{2m-t+1,\ldots, n-t\}$ is such a set.

Now suppose that $j > m-t$.  If $j \le t$, then set $P_1:=\{m-t+1,\ldots, m-t+j \}$ and note that
$|P_1|=j$ and $P_1\subset M_1\cap M_2$. The complement of $M_1\cup M_2$ has $n-2m+t$ elements, and
so we can find $n-2m+j$ elements there, to create a set of size $n-2m+2j$ with
$P_1$. Then $P:=P_1\cup \{2m-t+1,\ldots, n+j-t\}$ is an $(n-2m+2j)$-subset 
intersecting $M_1$ and $M_2$ in $j$-subsets.

Finally if $j>\max\{m-t,t\}$, then set $P_1:=\{m+1-j,\ldots, m+j-t\}$ and note that
$|P_1|=2j-t$. Again, the complement of $M_1\cup M_2$ has $n-2m+t$ elements, and so if we set
$P:=P_1\cup \{2m-t+1,\ldots, n\}$, then we will have a set $P$ of size $n-2m+2j$ 
intersecting $M_1$ and $M_2$ in $j$-subsets

Now we prove part (iii), so suppose $0\le k-2j\le n-2m$. From the results above, we have a set $P$
of size $n-2m+2j$ such that $|P\cap M_1|=|P\cap M_2|=j$. In each case there exists a $k$-subset $K$
of $P$ such that $K\cap M_i=P\cap M_i$ for $i=1,2$, proving condition (iii).

Finally, we prove (iv) and we assume now that $k-2j>n-2m$. In all situations above, $N\backslash P$
has $2m-2j$ elements.  So we can partition $N\backslash P$ into $m-j$ subsets of size $2$.  Merging
some parts of this partition if necessary leads to a partition of $N\backslash P$ into
$(k-2j)-(n-2m)$ sets of size at least $2$.
\end{proof}

Using Example~\ref{ex:con-nodes}, Lemma \ref{Weylfactorisation}, Proposition~\ref{prop:proj-flag},
we give a proof of Theorem~\ref{thm:linear-main}.

\begin{proof}[Proof of Theorem~\ref{thm:linear-main}]
Note that the lower bound on $j$ follows automatically from \eqref{eq:admis}, and so it suffices to
show that $\Proj{m,k}{j}(V)$ is collinearly complete if and only if $2j\le k+\max\{0,2m-n\}$.  First
we consider the case $m\le n/2$. We must prove that $\Proj{m,k}{j}(V)$ is collinearly complete if
and only if $2j\le k$.  Suppose first that $\Proj{m,k}{j}(V)$ is collinearly complete.  Let $U_1,
U_2$ be disjoint $m$-subspaces.  Then there exists a $k$-subspace $W$ such that $\dim(U_1\cap
W)=\dim(U_2\cap W)=j$.  So, in particular, the direct sum of $U_1\cap W$ and $U_2\cap W$ is a
$2j$-subspace of $W$ and hence $2j\le k$.
Conversely suppose that $2j\le k$.  Then by Example~\ref{ex:con-nodes} and Lemma
\ref{Weylfactorisation}, if $0\le k-2j\le n-2m$ then $\Proj{m,k}{j}(V)$ is collinearly complete.  So
suppose $k-2j>n-2m$, and let $\{e_1,\ldots, e_n\}$ be the canonical basis for $V$. Assume we have
two $m$-subspaces $M_1$ and $M_2$ with $\dim(M_1\cap M_2)=t$.  (The possible values for $t$ are as
in Corollary \ref{cor:c11-orbits}.)  Without loss of generality, $M_1=\langle e_1,\ldots,
e_m\rangle$ and $M_2=\langle e_{m-t+1},\ldots, e_{2m-t}\rangle$.  By Lemma \ref{lem:themdiagonals},
there exists a subset $P$ of $N:=\{1,\ldots, n\}$ of size $n-2m+2j$ such that $P$ intersects each of
$\{1,\ldots, m\}$ and $ \{m-t+1,\ldots, 2m-t\}$ in a $j$-subset, and there is a partition
$\mathcal{P}$ of $N\backslash P$ into $(k-2j)-(n-2m)$ subsets, each of size at least $2$.  Let
$S_1=\langle e_i: i\in P \rangle$, and note that $S_1$ has dimension $n-2m+2j$ and meets $M_1$ and
$M_2$ in $j$-dimensional subspaces.  Let $S_2= \langle \sum_{i\in \pi} e_i: \pi \in
\mathcal{P}\rangle$ and note that $S_2$ has dimension $(k-2j)-(n-2m)$ and intersects each of $S_1$,
$M_1$, and $M_2$ trivially, since each part of $\mathcal{P}$ has size at least $2$. It then follows
that $S:=S_1\oplus S_2$ is a $k$-dimensional subspace of $V$ that intersects $M_1$ and $M_2$ in
$j$-dimensional subspaces. Hence $\Proj{m,k}{j}(V)$ is collinearly complete.  This proves
Theorem~\ref{thm:linear-main} for $m\le n/2$.

Now suppose that $m>n/2$. By Proposition~\ref{prop:proj-flag}, $\Proj{m,k}{j}(V)$ is collinearly
complete if and only if $\Proj{\bar{m},\bar{k}}{\bar{j}}(V)$ is collinearly complete, where
$\bar{m}= n-m, \bar{k}=n-k, \bar{j} =n-m-k+j$. Note that $\bar{m}< n/2$, and as we have just proved,
$\Proj{\bar{m},\bar{k}}{\bar{j}}(V)$ is collinearly complete if and only if $2\bar{j} \le \bar{k}$,
that is, $2(n-m-k+j)\le n-k$, or equivalently, $2j\le k+2m-n$. This completes the proof.
\end{proof}

%
%

\section{Subspace-bisection geometries}\label{sec:pre}

\subsection{The reduction to the case $m\le k$}\label{sec:reduction}

In this subsection, we give a geometry isomorphism between certain subspace-bisection geometries
that will be used to reduce the proof of Theorems~\ref{thm:mainC1C2C1} and~\ref{thm:mainC2C1C2} to
the cases $m\le k$. We first establish some notation and definitions that we will use throughout
this section.

\begin{notation}\label{notation:c12-0}
Let $V$ be a vector space of dimension $n=2k$ over a finite field $\F$ of size $q$,
let $V=V_1\oplus V_2$ with $\dim(V_{1})=\dim(V_{2})=k$, and let $G=\GL(V)$ and
$Q=G_{\{V_1,V_2\}}$. Let $m$ be an integer such that
$1\le m<n$, and let $U\in\Gr_{m}(V)$ and $P:=G_{U}$ be such that $\dim(U\cap V_j)=k_j$
for $j=1,2$ with $k_1\le k_2$.
\end{notation}

\begin{proposition}\label{prop:reduction}
Let $\bar{m}=2k-m$, let $\overline{k}_{j}=k-m+k_{j}$, for $j=1,2$, and let $J=(k_{1},k_{2})$ and
$\J=(\overline{k}_{1}, \overline{k}_{2})$.  Then there is a geometry isomorphism from
$\Proj{m,k}{J}(V)$ to $\Proj{\m,k}{\J}(V)$.  In particular, $\Proj{m,k}{J}(V)$ is collinearly
(respectively, concurrently) complete if and only if $\Proj{\m,k}{\J}(V)$ is collinearly
(respectively, concurrently) complete.
\end{proposition}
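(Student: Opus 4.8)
The plan is to adapt the perp-map argument of Proposition~\ref{prop:proj-flag}(b) to the subspace--bisection setting. Fix the nondegenerate scalar product on $V$ used in~\eqref{eq:uperp} and define a map $f$ on the elements of $\Proj{m,k}{J}(V)$ by $f(U):=U^{\perp}$ for a point $U\in\Gr_{m}(V)$ and $f(\{V_{1},V_{2}\}):=\{V_{1}^{\perp},V_{2}^{\perp}\}$ for a line $\{V_{1},V_{2}\}$. Since $\dim U^{\perp}=n-m=2k-m=\bar{m}$, the map $f$ sends points of $\Proj{m,k}{J}(V)$ to points of $\Proj{\m,k}{\J}(V)$; and because $U\mapsto U^{\perp}$ is an inclusion-reversing involution, this restriction is a bijection $\Gr_{m}(V)\to\Gr_{\bar{m}}(V)$.

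First I would check that $f$ sends lines to lines, that is, that $\{V_{1}^{\perp},V_{2}^{\perp}\}$ is again a bisection of $V$ into two complementary $k$-subspaces. Each $V_{j}^{\perp}$ has dimension $2k-k=k$, and $V_{1}^{\perp}\cap V_{2}^{\perp}=(V_{1}+V_{2})^{\perp}=V^{\perp}=0$, so a dimension count gives $\dim(V_{1}^{\perp}+V_{2}^{\perp})=k+k-0=2k=n$ and hence $V=V_{1}^{\perp}\oplus V_{2}^{\perp}$. Thus $f$ maps the line set of $\Proj{m,k}{J}(V)$ onto that of $\Proj{\m,k}{\J}(V)$, and the involutive property of perp again shows this restriction is a bijection. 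Together with the previous paragraph, $f$ is a bijection sending points to points and lines to lines.

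The key step is to verify that incidence is preserved. For a point $U$ and a line $\{V_{1},V_{2}\}$ I would use $U^{\perp}\cap V_{j}^{\perp}=(U+V_{j})^{\perp}$ to compute
\[
\dim(U^{\perp}\cap V_{j}^{\perp})=n-\dim(U+V_{j})=2k-\bigl(m+k-\dim(U\cap V_{j})\bigr)=k-m+\dim(U\cap V_{j}).
\]
Hence $\dim(U\cap V_{j})=k_{j}$ holds exactly when $\dim(U^{\perp}\cap V_{j}^{\perp})=k-m+k_{j}=\overline{k}_{j}$, and since $\overline{k}_{1}\le\overline{k}_{2}$ if and only if $k_{1}\le k_{2}$, the ordered-pair incidence condition (including the ``or $(k_{2},k_{1})$'' alternative) transfers verbatim. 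Therefore $U$ is incident with $\{V_{1},V_{2}\}$ in $\Proj{m,k}{J}(V)$ if and only if $U^{\perp}$ is incident with $\{V_{1}^{\perp},V_{2}^{\perp}\}$ in $\Proj{\m,k}{\J}(V)$, so $f$ is a geometry isomorphism. The ``in particular'' claim then follows because collinear and concurrent completeness are geometry isomorphism invariants, as recorded after Definition~\ref{defn:col-con}.

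The argument is essentially bookkeeping of dimensions and presents no serious obstacle; the only point needing genuine care is confirming that the perp of a bisection is again a bisection — rather than merely a pair of $k$-subspaces that might fail to be complementary — which is precisely where the nondegeneracy of the form and the identity $(V_{1}+V_{2})^{\perp}=V_{1}^{\perp}\cap V_{2}^{\perp}$ enter.
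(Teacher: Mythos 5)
Your proposal is correct and follows essentially the same route as the paper: the perp map $U\mapsto U^{\perp}$ on points and $\{V_{1},V_{2}\}\mapsto\{V_{1}^{\perp},V_{2}^{\perp}\}$ on bisections, with the incidence check via $\dim(U^{\perp}\cap V_{j}^{\perp})=2k-\dim(U+V_{j})=k-m+k_{j}=\overline{k}_{j}$. The only difference is that you spell out the verification that the perp of a bisection is again a bisection, which the paper leaves as a remark that $f$ ``preserves bisections.''
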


\begin{proof}
Define a map $f$ from the elements of $\Proj{m,k}{J}(V)$ to the elements of $\Proj{\m,k}{\J}(V)$ by
$f(U):=U^\perp$ if $U\in \Gr_{m}(V)$, and $f(\{V_1,V_2\}):=\{V_1^\perp,V_2^\perp\}$ for a bisection
$\{V_1,V_2\}$ of $V$. It is straight-forward to prove that $f$ is a bijection. Notice also that $f$
sends $\Gr_m(V)$ to $\Gr_{\m}(V)$ (`points to points') and preserves bisections (`lines to lines'),
so in order to show that $f$ is a geometry isomorphism, we only need to show that incidence is
preserved. Suppose we have $U\in\Gr_{m}(V)$ and a bisection $\{V_1,V_2\}$ which are incident in
$\Proj{m,k}{J}(V)$; that is, $(\dim(U\cap V_{1}),\dim(U\cap V_{2}))= (k_{1},k_{2})$ or
$(k_{2},k_{1})$. Suppose without loss of generality that $(\dim(U\cap V_{1}),\dim(U\cap V_{2}))=
(k_{1},k_{2})$. Since $U^\perp \cap V_1^\perp= (U+V_1)^\perp $ and $U^\perp \cap V_2^\perp=
(U+V_2)^\perp$, we have for each $j\in\{1,2\}$,
\begin{align*}
\dim(U^\perp\cap V_{j}^\perp)&=2k-\dim( U+V_j)=2k-\dim U-\dim V_j+\dim(U\cap V_j)\\
&=k-m+k_j=\bar{k}_j.
\end{align*}
Therefore, $U^\perp$ and $\{V_1^\perp,V_2^\perp\}$ are incident in $\Proj{\m,k}{\J}(V)$, and we have
shown that $f$ is a geometry isomorphism. It then follows that $\Proj{m,k}{J}(V)$ is collinearly
(respectively, concurrently) complete if and only if $\Proj{\m,k}{\J}(V)$ is collinearly
(respectively, concurrently) complete.
\end{proof}

Our analysis frequently uses the following property of \emph{diagonal subspaces} of a decomposition
$Y=Y_1\oplus Y_2$. These are subspaces $U$ of $Y$ such that $U\cap Y_j=0$ for $j=1,2$. Clearly a
diagonal subspace has dimension at most $\min\{\dim Y_1,\dim Y_2\}$, and if equality holds, then we
call $U$ a \emph{maximal diagonal subspace}.

\begin{lemma}\label{lem:c12-2diag}
Let $Y=Y_1\oplus Y_2$, and let $1\le r\le \min\{\dim Y_1,\dim Y_2\}$. Then there exist two disjoint
diagonal $r$-subspaces of $Y$ if and only if $(\max\{\dim Y_1,\dim Y_2\},q)\ne (1,2)$.
\end{lemma}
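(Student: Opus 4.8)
\textbf{The plan} is to reduce the problem to a statement about linear maps and then to split on whether the field is large enough to contain a scalar different from $0$ and $1$. Assume without loss of generality that $\dim Y_1\le\dim Y_2$, so that $\max\{\dim Y_1,\dim Y_2\}=\dim Y_2$ and $r\le\dim Y_1$. Every diagonal $r$-subspace $D$ projects isomorphically onto an $r$-subspace $A=\pi_1(D)$ of $Y_1$ (where $\pi_1$ is the projection along $Y_2$), and is the graph $D=\{a+\psi(a):a\in A\}$ of an injective linear map $\psi\colon A\to Y_2$; conversely any such graph is a diagonal $r$-subspace, so I would first record this correspondence. The key reduction is that, taking both subspaces to have the \emph{same} image $A$, two graphs $D_i=\{a+\psi_i(a):a\in A\}$ satisfy $D_1\cap D_2=0$ if and only if $\psi_1-\psi_2$ is injective, since a nonzero common vector would have equal $Y_1$-parts $a\ne 0$ and force $(\psi_1-\psi_2)(a)=0$. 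Thus it suffices to produce one $r$-subspace $A\le Y_1$ and two injective linear maps $\psi_1,\psi_2\colon A\to Y_2$ whose difference is also injective.

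For the forward implication (the easier direction) I would argue the contrapositive: if $(\max\{\dim Y_1,\dim Y_2\},q)=(1,2)$ then $\dim Y_1=\dim Y_2=r=1$, the space $Y$ is $2$-dimensional over $\F_2$, and its diagonal $1$-subspaces are exactly the $1$-subspaces other than $Y_1$ and $Y_2$. As a $2$-dimensional space over $\F_2$ has only $q+1=3$ one-dimensional subspaces, there is a unique diagonal $1$-subspace, so two (necessarily distinct, hence disjoint) diagonal $1$-subspaces cannot exist.

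For the converse, suppose $(\max\{\dim Y_1,\dim Y_2\},q)\ne(1,2)$. When $q\ge 3$ the construction is immediate and uniform in all parameters: fix any injective $\psi_1\colon A\to Y_2$ (which exists as $r\le\dim Y_2$) and a scalar $c\in\F\setminus\{0,1\}$, and set $\psi_2:=c\,\psi_1$. Then $\psi_2$ is injective ($c\ne 0$) and $\psi_1-\psi_2=(1-c)\psi_1$ is injective ($c\ne 1$). The remaining, and genuinely harder, case is $q=2$, where necessarily $\max\{\dim Y_1,\dim Y_2\}=\dim Y_2\ge 2$; here the scalar trick is unavailable because $\F_2$ has no scalar outside $\{0,1\}$. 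I would split further. If $r=1$, I would choose $\psi_1,\psi_2$ sending a generator of $A$ to two distinct nonzero vectors of $Y_2$ (possible since $\dim Y_2\ge 2$ gives at least $2^{2}-1=3$ nonzero vectors); distinctness over $\F_2$ forces their sum to be nonzero, so $\psi_1-\psi_2$ is injective. If $r\ge 2$, I would identify $A$ with $\F_2^{\,r}$, fix an embedding $\iota\colon\F_2^{\,r}\hookrightarrow Y_2$ (available as $\dim Y_2\ge r$), take $C$ to be the companion matrix of an irreducible polynomial of degree $r$ over $\F_2$, and set $\psi_1=\iota$ and $\psi_2=\iota\circ C$.

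The main obstacle is precisely this last $q=2$, $r\ge 2$ case, and it is resolved by the choice of $C$: because the characteristic polynomial of $C$ is irreducible of degree $r\ge 2$, it has nonzero constant term and is not divisible by $x+1$, so neither $0$ nor $1$ is an eigenvalue of $C$ over $\F_2$; hence both $C$ and $C+I=C-I$ are invertible. Consequently $\psi_2=\iota\circ C$ is injective and $\psi_1-\psi_2=\iota\circ(I-C)$ is injective, completing the construction. Assembling the three cases yields two disjoint diagonal $r$-subspaces whenever $(\max\{\dim Y_1,\dim Y_2\},q)\ne(1,2)$, which together with the contrapositive above proves the equivalence.
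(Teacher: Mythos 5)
Your proof is correct, and for the crucial case $q=2$, $r\ge 2$ it takes a genuinely different route from the paper. Both arguments agree on the easy parts: the uniqueness of the diagonal $1$-subspace $\la e+f\ra$ when $(\max\{\dim Y_1,\dim Y_2\},q)=(1,2)$, and the scalar trick $\psi_2=c\psi_1$ (the paper's $\la e_i+af_i\ra$) when $q\ge 3$. Where you diverge is in the binary case: the paper works with explicit bases, taking $Z_1=\la e_i+f_i\ra$ and the cyclically shifted $Z_2=\la e_i+f_{i+1}\ra$, and then must confront the fact that when $r=y_1=y_2$ these two subspaces share the vector $\sum_{i=1}^r(e_i+f_i)$, forcing a further ad hoc replacement $Z_2'$ and a second intersection computation. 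Your reformulation --- two graphs over the same domain $A$ are disjoint iff $\psi_1-\psi_2$ is injective, so it suffices to find an operator $C$ on $\F_2^{\,r}$ with neither $0$ nor $1$ as an eigenvalue --- dissolves that difficulty: the companion matrix of an irreducible polynomial of degree $r\ge 2$ works uniformly, with no subcases, and it also explains conceptually why $(1,2)$ is the unique exception (over $\F_2$ in dimension $1$ the only invertible operator is the identity). The paper's construction buys concreteness and self-containedness (explicit spanning vectors, no appeal to the existence of irreducible polynomials of every degree, which your argument implicitly uses and which, while standard, deserves a one-line justification, e.g.\ via the minimal polynomial of a primitive element of $\F_{2^r}$); your argument buys brevity, uniformity in $r$, and a cleaner separation of the linear-algebraic content from the combinatorics. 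Your separate treatment of $r=1$, $q=2$ (two distinct nonzero images in $Y_2$, whose sum is then nonzero) is also correct and necessary, since no irreducible polynomial of degree $1$ over $\F_2$ avoids both roots $0$ and $1$.
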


\begin{proof}
Suppose first that $q=2$ and $\dim Y_1=\dim Y_2=1$. Then $Y$ has a basis $\langle e,f\rangle$ where
$Y_1=\langle e\rangle$ and $Y_2=\langle f\rangle$. There is a unique diagonal
$1$-subspace of $Y$, namely $\langle e+f\rangle$. So there do not exist two disjoint diagonal
$r$-subspaces of $Y$ in this case.

Now suppose $(\max\{\dim Y_1,\dim Y_2\},q)\ne (1,2)$.  Suppose $y_1:=\dim Y_1$ and $y_2:=\dim Y_2$,
and assume without loss of generality that $y_1\le y_2$.  Write $Y_1=\langle e_1,\ldots,
e_{y_1}\rangle$ and $Y_2=\langle f_1,\ldots, f_{y_2}\rangle$.  If $q>2$, then there exists a scalar
$a\in\F\setminus\{0,1\}$ and $\langle e_i+f_i:i=1,\ldots, r\rangle$ and $\langle
e_i+af_i:i=1,\ldots, r\rangle$ are disjoint diagonal $r$-subspaces of $Y$. So suppose now that
$q=2$. Then $y_2\ge 2$.  and $Z_1:=\langle e_i+f_i:i=1,\ldots, r\rangle$ and $Z_2:=\langle
e_i+f_{i+1}:i=1,\ldots, r\rangle$ are distinct diagonal $r$-subspaces of $Y$, where it is understood
that $f_{i+1}=f_1$ if $i=y_2$.  We claim that either $Z_1\cap Z_2=\{0\}$ or $r=y_1=y_2$ and $Z_1\cap
Z_2=\la \sum_{i=1}^r(e_i+f_i)\ra$: if $v=\sum_{i=1}^ra_i(e_i+f_i)=\sum_{i=1}^rb_i(e_i+f_{i+1})\in
Z_1\cap Z_2$, then $\sum_{i=1}^r (a_i+b_i)e_i=(\sum_{i=1}^r b_if_{i+1})+(\sum_{i=1}^ra_if_i)\in
Y_1\cap Y_2=\{0\}$, and careful consideration of this equality yields that $a_i=b_i=0$ for all $i$
(so that $v=0$) unless $r=y_1=y_2$ and $a_i=b_i=1$ for all $i$ (so $v=\sum_{i=1}^r(e_i+f_i)$),
proving the claim. Finally suppose that $r=y_1=y_2\ge2$ and replace $Z_2$ by $Z_2':=\la
e_r+f_1+f_2\rangle\oplus\langle e_i+f_{i+1}:1\le i\le r-1 \ra$. A similar computation shows
that $Z_1\cap Z_2'=\{0\}$.
\end{proof}

\subsection{Collinear completeness}\label{sec:gl-c12}

In this section we study collinear completeness of the geometries $\Proj{m,k}{(k_1,k_2)}(V)$ with
$0\le k_1\le k_2\le k_1+k_2\le m\le k$. We later use Proposition~\ref{prop:reduction} for the case
$m>k$. The geometry $\Proj{m,k}{(k_1,k_2)}(V)$ is collinearly complete if and only if each pair of
$m$-subspaces $U_{1}, U_{2}\in \Gr_m(V)$ is incident with at least one bisection. Since all subspace
pairs from $\Gr_m(V)$ which intersect in a subspace of given dimension form an orbit under the
induced $G$-action, we need to check this property for one subspace pair $U_{1}, U_{2}$ for each
possible dimension $t=\dim(U_1\cap U_2)$, namely for each $t$ satisfying $0\le t\le m-1$ (since
$m\le k$).

\subsubsection{The exceptional case $(q,m,k,k_1,k_2)=(2,1,1,0,0)$}\label{sec:c12-special}

The parameters in the heading give rise to an exception in Theorem~\ref{thm:mainC1C2C1},
and so merit special attention.

\begin{proposition}\label{prop:c12-diag}
Let $m$ and $k$ be positive integers such that $1\le m\le k$, and let $V$ be a $2k$-dimensional
vector space. Then $\Proj{m,k}{(0,0)}(V)$ is collinearly complete if and only if $(q,m,k)\neq
(2,1,1)$.
\end{proposition}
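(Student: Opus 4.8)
The plan is to work entirely in the geometry $\Proj{m,k}{(0,0)}(V)$, in which an $m$-subspace $U$ is incident with a bisection $\{V_1,V_2\}$ precisely when $U\cap V_1=U\cap V_2=0$. Since $G=\GL(V)$ is transitive on the pairs of $m$-subspaces meeting in a subspace of any fixed dimension (in the spirit of Corollary~\ref{cor:c11-orbits}), collinear completeness is equivalent to the following finite check: for each $t$ with $0\le t\le m-1$ there is one pair $U_1,U_2$ of $m$-subspaces with $\dim(U_1\cap U_2)=t$ admitting a bisection $\{V_1,V_2\}$ with $U_i\cap V_j=0$ for all $i,j$. So I would fix such a representative pair for each $t$ and try to build two complementary $k$-subspaces $V_1,V_2$ each avoiding $U_1$ and $U_2$.

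For necessity I would dispose of the claimed exception directly: when $(q,m,k)=(2,1,1)$ we have $\dim V=2$ and exactly three $1$-subspaces; a bisection is an unordered pair of these and is incident only with the remaining $1$-subspace, so no bisection is incident with two prescribed distinct points. Hence $\Proj{1,1}{(0,0)}(V)$ fails to be collinearly complete over $\F_2$, giving the ``only if'' direction.

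For sufficiency I would set up an adapted basis. Write $P_0:=U_1\cap U_2$ of dimension $t$, choose $A_1,A_2$ with $U_1=P_0\oplus A_1$ and $U_2=P_0\oplus A_2$ (each $A_i$ of dimension $m-t$), and take an outside complement $C$ of $W:=U_1+U_2$ in $V$, so that $V=P_0\oplus A_1\oplus A_2\oplus C$ with $\dim C=s:=2k-2m+t\ge 0$ (and $s\ge t$ since $m\le k$). First, applying Lemma~\ref{lem:c12-2diag} to $A_1\oplus A_2$, I take two disjoint diagonal $(m-t)$-subspaces $D_1,D_2$; a diagonal subspace of $A_1\oplus A_2$ meets each of $U_1,U_2$ trivially because $U_j\cap(A_1\oplus A_2)=A_j$, and $D_1\oplus D_2=A_1\oplus A_2$. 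Second, I split $P_0\oplus C$ as $X_1\oplus X_2$ with $\dim X_i=k-m+t=(t+s)/2$ and each $X_i\cap P_0=0$, concretely by pairing each basis vector of $P_0$ with a distinct basis vector of $C$ into a diagonal vector and distributing the remaining $C$-vectors evenly (possible since $s\ge t$). Setting $V_i:=D_i\oplus X_i$ yields $k$-subspaces with $V_1\oplus V_2=V$, and $V_i\cap U_j=0$: any $v\in V_i\cap U_j$ has zero $C$-component (as $U_j\subseteq W$), forcing its $X_i$-part into $X_i\cap P_0=0$, whence $v\in D_i\cap U_j=0$.

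The main obstacle is precisely the hypothesis of Lemma~\ref{lem:c12-2diag}: two disjoint diagonal $(m-t)$-subspaces of $A_1\oplus A_2$ need not exist when $q=2$ and $m-t=1$, which is exactly where the global exception originates. I would treat this residual case separately. Once $(q,m,k)\ne(2,1,1)$ is assumed, in the subcase $q=2$, $m-t=1$ one computes $s=2k-m-1$, which is positive (since $m\ge2$ gives $2k\ge2m\ge m+2$, and $m=1$ forces $t=0$, $k\ge2$, so $s=2k-2\ge2$); thus a spare vector $g\in C$ is available. Over $\F_2$ the plane $A_1\oplus A_2$ carries a unique diagonal line $D$, which I place in $V_1$; in place of the unavailable second diagonal I use a line $\langle a+g\rangle$ with $0\ne a\in A_1$, which meets $U_1$ and $U_2$ trivially thanks to its nonzero $C$-component, and then complete $V_2$ from the remaining core/outside splitting. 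Checking that this modified $V_2$ remains disjoint from $V_1$, $U_1$, and $U_2$ is the delicate point, and it is here that the exclusion of $(2,1,1)$ (equivalently $s\ge 1$) is genuinely used. Combining the generic construction with this case then shows that $\Proj{m,k}{(0,0)}(V)$ is collinearly complete whenever $(q,m,k)\ne(2,1,1)$.
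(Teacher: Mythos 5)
Your necessity argument and your generic construction are correct, and your direct splitting of $P_0\oplus C$ into $X_1\oplus X_2$ with $X_i\cap P_0=0$ is a genuine streamlining: the paper instead decomposes $T\oplus C$ as $C_1\oplus C_2$ with $T\subseteq C_1$ and applies Lemma~\ref{lem:c12-2diag} a second time, which forces it to treat a second exceptional case $t=m-k+1$ that your construction never meets. However, the residual case $q=2$, $t=m-1$ is not resolved by your sketch, and its sub-case $m=k$ is a genuine gap rather than a delicate verification. There $s=2k-m-1=k-1=\dim X_2$, and since $X_2\cap P_0=0$ the projection $\pi_C\colon X_2\to C$ is injective, hence surjective; so \emph{every} vector of $C$, in particular your spare vector $g$, is the $C$-component of some $x\in X_2$, and then $(a+g)+x=a+p_x\in A_1\oplus P_0=U_1$ is a nonzero vector of $V_2\cap U_1$, no matter how $g$ is chosen. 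Taking $a\in A_2$ instead fails symmetrically against $U_2$, and taking $a=a_1+a_2$ makes $\dim(V_1+V_2)=1+(m-1)+(k-1)<2k$, destroying the bisection. Consequently your claim that excluding $(2,1,1)$, i.e.\ $s\ge 1$, is what makes the patch work is false: for $m=k\ge 2$, $t=k-1$ one has $s=k-1\ge 1$ and the template still cannot succeed. What the patch actually requires is $\dim C>\dim X_2$, i.e.\ $m<k$, so that $g$ can be chosen outside $\pi_C(X_2)$.

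The case $q=2$, $m=k\ge 2$, $t=k-1$ is nonempty (e.g.\ two $2$-subspaces of $V(4,2)$ meeting in a line) and the proposition is true there, so it must be handled by a construction outside your template. The paper reverses the roles: it fixes a bisection $\{V_1,V_2\}$ first, takes $U_1$ to be a maximal diagonal subspace of $V_1\oplus V_2$, and then builds a second maximal diagonal subspace $U_2=T\oplus\langle u+v\rangle$, with $T$ a $(k-1)$-subspace of $U_1$, $u\in U_1\setminus T$ and $v\in V_1$ in the projection of $T$, so that $\dim(U_1\cap U_2)=k-1$. Since $\GL(V)$ is transitive on pairs of $m$-subspaces with fixed intersection dimension, exhibiting one incident pair for this $t$ suffices. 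You need to supply an argument of this kind for $q=2$, $m=k$, $t=k-1$ before your proof is complete.
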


\begin{proof}
First let $(m,k,q)= (1,1,2)$, let $\{e, f\}$ be a basis for $V$, and let $U_1=\langle e\rangle$ and
$U_2=\langle f\rangle$. Then a $k$-subspace of $V$ disjoint from both $U_1$ and $U_2$ must be a
diagonal subspace of $U_1\oplus U_2$. However (see Lemma~\ref{lem:c12-2diag}), there is only one
diagonal subspace of $U_1\oplus U_2$, namely $\langle e+f\rangle$, and therefore there is no
bisection incident with both $U_1$ and $U_2$ in $\Proj{1,1}{(0,0)}(V)$, so $\Proj{1,1}{(0,0)}(V)$ is
not collinearly complete when $q=2$.

Now suppose that $(m,k,q)\ne (1,1,2)$. Let $U_1$ and $U_2$ be $m$-subspaces meeting in a
$t$-subspace $T$ (where $t$ is any integer satisfying $0\le t\le m-1$), so we can decompose $U_1,
U_2, V$ as follows: $U_1=P_1\oplus T$, $U_2=P_2\oplus T$, and $$ V=P_1\oplus T\oplus P_2\oplus C $$
where $\dim C=2k-2m+t$. Furthermore, we can decompose $T\oplus C$ as $C_1\oplus C_2$ where $\dim
C_1=\dim C_2=k-m+t$ and $T\subseteq C_1$. We make two applications of Lemma \ref{lem:c12-2diag}:
\begin{enumerate}
\item[(a)] If $(m-t,q)\ne (1,2)$ then there exist disjoint diagonal $(m-t)$-subspaces $D_1$ and $D_2$ of $P_1\oplus P_2$.
\item[(b)] If $(k-m+t,q)\ne (1,2)$ then there exist disjoint diagonal $(k-m+t)$-subspaces $E_1$ and $E_2$ of $C_1\oplus C_2$.
\end{enumerate}
So in particular, if both of these conditions hold, then $V_1:=D_1+E_1$ and $V_2:=D_2+E_2$ are
disjoint $k$-subspaces forming a bisection such that $V_i\cap U_j=0$ for all $i,j$, so $U_1, U_2$
are both incident with $\{V_1,V_2\}$. Thus if both conditions (a) and (b) hold for all values of
$t$, then $\Proj{m,k}{(0,0)}(V)$ is collinearly complete. In particular, this is the case if $q>2$,
by Lemma~\ref{lem:c12-2diag}.

Thus we may assume that $q=2$, and hence that $(m,k)\ne(1,1)$, and that at least one of (a) or (b)
does not hold for some $t$.  Thus $t=m-1$ (if (a) fails) or $t=m-k+1$ (if (b) fails).  Note that, in
either case, $k-m+t\ge 0$.  Also $k\ge 2$ (since $(m,k)\ne (1,1)$ and $m\le k$).  Since
$\dim(C)=2k-2m+t\ge k-m+t\ge0$ there is a decomposition $C=C_3\oplus C_4$ with $\dim(C_3)=k-m+t$
and $\dim(C_4)=k-m$.  Let $D_1$ be a maximal diagonal subspace of $P_1\oplus P_2$ and let
$V_1:=D_1\oplus C_3$.  Then $\dim V_1=k$ and $V_1$ intersects both $U_1$ and $U_2$ trivially.  We
have two cases:

\begin{description}
\item[Case 1: $\mathbf{t=m-1}$.]  If $m\le k-1$, then let $D_2$ be a maximal diagonal subspace of
  $U_2\oplus C_3$ and let $V_2:=D_2\oplus C_4$. Then $V_2$ has dimension $k$ as
  $\dim(D_2)=\min\{m,k-1\}=m$, and by construction $V_2\cap U_2=0$. We also have $V_2\cap U_1=0$
  since $(U_2\oplus C_3)\oplus C_4 = (D_2\oplus C_3)\oplus C_4 = V_2\oplus C_3$ so we have
  $V=P_1\oplus V_2\oplus C_3$ whence $U_1\cap V_2=T\cap V_2\subseteq U_2\cap V_2 = 0$. Thus in this
  case, we have exhibited a bisection $\{V_1,V_2\}$ incident in $\Proj{m,k}{(0,0)}(V)$ with both
  $U_1$ and $U_2$.

So suppose now that $m=k$ and $t=k-1$.  It is sufficient to construct two $k$-subspaces $U_1, U_2$
disjoint from the $k$-subspaces of a given bisection such that $\dim(U_1\cap U_2)=k-1$. So let
$\{V_1, V_2\}$ be a bisection of $V$ and let $U_1$ be a maximal diagonal subspace of $V_1\oplus
V_2$. Let $T$ be a $(k-1)$-subspace of $U_1$, let $u\in U_1\setminus T$, and let $v\in V_1$ lie in
the projection of $T$ onto $V_1$. Define $U_2= T\oplus \la u+v\ra$. Then $U_2$ is a diagonal
$k$-subspace of $V_1\oplus V_2$ such that $U_1\cap U_2=T$.  Thus the bisection $\{V_1,V_2\}$ is
incident in $\Proj{m,k}{(0,0)}(V)$ with both $U_1$ and $U_2$ and $\dim(U_1\cap U_2)=k-1$.

\item[Case 2: $\mathbf{t=m-k+1}$.] Since $t\ge 0$, either $m=k\ge 2$ (and $t=1$) or $m=k-1$ (and
  $t=0$).  Suppose first that $m=k-1$ and $t=0$. By Lemma \ref{lem:c12-2diag} (since $k\ge2$),
  there exists a maximal diagonal subspace $D_2$ of $U_1\oplus U_2$ disjoint from $D_1$. Let
  $V_2:=D_2\oplus C_4$. Then $\dim V_2=k$ as $\dim C_4=k-m=1$.  Clearly $V_2$ intersects $V_1$,
  $U_1$ and $U_2$ trivially. So in this case, we have exhibited a bisection $\{V_1,V_2\}$ incident
  in $\Proj{m,k}{(0,0)}(V)$ with both $U_1$ and $U_2$.

So suppose now that $m=k$ and $t=1$. If $k=2$ then the required subspaces $U_1, U_2$ and bisection
$\{V_1, V_2\}$ were constructed at the end of Case 1. So we may assume that $k\ge3$. As at the end
of Case 1, choose a bisection $\{V_1,V_2\}$ of $V$ and a maximal diagonal subspace $U_1$ of it. Let
$v_1+v_2$ be a non-zero element of $U_1$ with $v_i\in V_i$ for $i=1,2$, let $T=\la v_1+v_2\ra$, and
choose decompositions $U_1=D\oplus T$, $V_1=V_1'\oplus \la v_1\ra$, and $V_2=V_2'\oplus \la
v_2\ra$. Then $D$ is a maximal diagonal subspace of $V_1'\oplus V_2'$. Since $k\ge3$ there exists
(see the proof of Lemma~\ref{lem:c12-2diag}) a maximal diagonal subspace $D'$ of $V_1'\oplus V_2'$
disjoint from $D$. Then $U_2:=D'\oplus T$ is a maximal diagonal subspace of $V_1\oplus V_2$ such
that $U_1\cap U_2=T$. Thus the bisection $\{V_1,V_2\}$ is incident in $\Proj{m,k}{(0,0)}(V)$ with
both $U_1$ and $U_2$ and $\dim(U_1\cap U_2)=1$.
\end{description}

\end{proof}

We end this subsection by proving one more special case, for which the arguments are similar to
those used above. This case will arise in our general analysis in Theorem~\ref{thm:c12:gen-poss}.

\begin{lemma}\label{lem:smallcase}
Let $q=2$, $k_1=0$, $k_2>0$, and $0\le t\le k-1$. Suppose we have one of the following:
\begin{enumerate}
\item[(a)] $k_2=k-1-t$, or
\item[(b)] $k_2=k-1$ and $k\in\{2,3,4\}$.
\end{enumerate}
Then there exist $k$-subspaces $U_1$ and $U_2$ such that $\dim(U_1\cap U_2)=t$, and a bisection of
$V$ incident in $\Proj{k,k}{(0,k_2)}(V)$ with both $U_1$ and $U_2$.  In particular, if $q=2$ and
$2\le k\le 4$, then $\Proj{k,k}{(0,k-1)}(V)$ is collinearly complete.
\end{lemma}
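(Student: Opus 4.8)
The plan is to follow the strategy of Proposition~\ref{prop:c12-diag}. Since $\GL(V)$ is transitive on the pairs of $k$-subspaces meeting in a subspace of any fixed dimension (this is Corollary~\ref{cor:c11-orbits} with $m=k$), to prove the concluding assertion that $\Proj{k,k}{(0,k-1)}(V)$ is collinearly complete it suffices to exhibit, for each $t$ with $0\le t\le k-1$, a single pair $U_1,U_2$ with $\dim(U_1\cap U_2)=t$ incident with a common line. Hence the final sentence reduces to case~(b), which I would apply once for each admissible $t$ (case~(b) places no restriction on $t$ beyond $0\le t\le k-1$); the substance of the lemma is therefore the two existence constructions.

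In both constructions I would fix a bisection $\{V_1,V_2\}$ and build $U_1,U_2$ inside $V=V_1\oplus V_2$. The guiding observation is that incidence of a $k$-subspace $U$ with $\{V_1,V_2\}$ in $\Proj{k,k}{(0,k_2)}(V)$ forces $U$ to meet one part of the bisection in dimension $k_2$ and the other trivially; equivalently $U=B\oplus A$, where $B$ is a $k_2$-subspace contained in one of $V_1,V_2$ and $A$ is a \emph{diagonal} subspace of dimension $k-k_2$ whose projection onto that part complements $B$ (so that $U$ meets the other part trivially). The diagonal subspaces are furnished by Lemma~\ref{lem:c12-2diag}, away from its single $q=2$ exception. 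For case~(a), where $k_2=k-1-t$ and so $\dim A=t+1$, I would place $U_1$ and $U_2$ on opposite parts of the bisection; then $U_1\cap U_2$ automatically meets each $V_i$ trivially, and choosing the diagonal summands $A_1,A_2$ with the appropriate overlap makes $\dim(U_1\cap U_2)$ equal to the prescribed $t$. The dimension count closes because $k-k_2=t+1$, so $\dim(B_i)+\dim(A_i)=k$, and the constraint $\dim(U_1\cap U_2)\le 2t+2$ (coming from $B_1\oplus B_2$ inside $U_1+U_2$) leaves ample room to realise the value $t$.

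Case~(b) is the genuine difficulty, and it is here that $q=2$ bites. Now $k_2=k-1$, so $A$ is forced to be a single diagonal line ($\dim A=k-k_2=1$) irrespective of $t$. Placing both $U_1,U_2$ on the same part forces $\dim(U_1\cap U_2)\ge k-2$ (their $(k-1)$-subspaces inside one $V_i$ already meet in dimension at least $k-2$), whereas placing them on opposite parts forces $\dim(U_1\cap U_2)\le 2$; so realizing an intermediate value of $t$ requires splitting the common subspace across both $V_1$ and $V_2$, and over $\F_2$ the scarcity of diagonal subspaces repeatedly obstructs the generic choice. I therefore expect to treat the three cases $k\in\{2,3,4\}$ by explicit coordinate constructions in the spirit of the $q=2$ analysis of Proposition~\ref{prop:c12-diag}: fix bases $e_i$ of $V_1$ and $f_i$ of $V_2$, write down $B_1,B_2$ and the two diagonal lines by hand for each pair $(k,t)$, and verify directly that each $U_i$ meets one part in dimension $k-1$, meets the other trivially, and that $\dim(U_1\cap U_2)=t$. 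Note that $k\le 4$ is exactly the range in which the inequality $3k_2\le k+1+m+k_1$ of Theorem~\ref{thm:mainC1C2C1} holds for $(m,k_1,k_2)=(k,0,k-1)$, so these are precisely the boundary cases of that theorem; the main obstacle, and the reason the statement is confined to $k\le 4$, is this simultaneous control of all three dimensions in the diagonal-poor geometry over $\F_2$.
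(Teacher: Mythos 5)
Your overall strategy matches the paper's: case (a) via maximal diagonal subspaces (Lemma~\ref{lem:c12-2diag}), case (b) via explicit coordinate configurations for each pair $(k,t)$, with the reduction to one configuration per $t$ justified by the orbit description in Corollary~\ref{cor:c11-orbits}. In case (a) you reverse the direction of the paper's construction: the paper fixes $U_1,U_2$ with $\dim(U_1\cap U_2)=t$ and manufactures the bisection from diagonal subspaces of $X_1\oplus\bar U_2$, $X_1\oplus X_2$ and $C\oplus T$, whereas you fix the bisection and build $U_1=B_1\oplus A_1$, $U_2=B_2\oplus A_2$ on opposite parts. That direction is legitimate and does work --- for instance $B_1=\la e_1,\dots,e_{k_2}\ra$, $A_1=\la e_{k_2+i}+f_i : i\le t+1\ra$, $B_2=\la f_{t+2},\dots,f_k\ra$, $A_2=\la e_{k_2+i}+f_i : i\le t\ra\oplus\la e_1+f_{t+1}\ra$ (using $k_2\ge1$) gives $U_1\cap U_2=\la e_{k_2+i}+f_i: i\le t\ra$ --- but your sketch skips the one non-trivial verification, namely that $U_1\cap U_2$ equals $A_1\cap A_2$ rather than something larger: a priori the intersection could contain vectors with nonzero $B_i$-components, and this must be checked for the chosen spans. (A small imprecision: with $k_2=k-1$ the opposite-part configuration forces $\dim(U_1\cap U_2)\le k-k_2=1$, not $\le 2$; this only strengthens your point that intermediate $t$ needs mixed configurations.)

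The genuine shortfall is case (b). You correctly diagnose why generic diagonal-subspace arguments fail over $\F_2$ when $k_2=k-1$, and you correctly identify that explicit configurations are required --- which is exactly what the paper supplies in Table~\ref{table:nicebisections} --- but you do not produce them. Those six configurations (for $k\in\{2,3,4\}$ and $1\le t\le k-1$) \emph{are} the proof of case (b); announcing that you expect to find them is not the same as exhibiting and checking them, and there is no soft argument in the background guaranteeing their existence (indeed the lemma is false in spirit for $k\ge5$, where $3k_2\le k+1+m+k_1$ fails). Until that table is written down and verified, the lemma is not established for $k_2=k-1$ and $t\ge1$, and hence neither is the final assertion about collinear completeness of $\Proj{k,k}{(0,k-1)}(V)$.
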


\begin{proof}
Choose $k$-subspaces $U_1, U_2$ with $T=U_1\cap U_2$ of dimension $t$. Suppose first of all that
$k_2=k-1-t$. First we decompose $U_1=T\oplus X_1\oplus \bar{U}_1$, $U_2=T\oplus X_2\oplus
\bar{U}_2$, where $\dim\bar{U}_i=k_2$ and $\dim X_{i}=1$ for $i=1,2$. Since $t\ge 0$, we can
decompose $V=C\oplus (U_1+U_2)$ where $\dim C=t=k-k_2-1$. Note that if $t=0$, then $C$ is the
trivial subspace. Let $D_1$ be a maximal diagonal subspace of $X_1\oplus \bar{U}_2$, and let $D_2$
be a maximal diagonal subspace of $X_1\oplus X_2$. Hence $D_1$ and $D_2$ are disjoint $1$-subspaces.
Let $D_3$ be a maximal diagonal subspace of $C\oplus T$, and note that $\dim D_3=k-k_2-1$, and in
particular, $D_3=0$ if $t=0$. We now choose the bisection to consist of the $k$-spaces $V_1$ and
$V_2$ defined as follows:
\begin{align*}
V_1&:=\bar{U}_1\oplus D_1\oplus D_3\\
V_2&:=\bar{U}_2\oplus D_2 \oplus C.
\end{align*}
It is then straight-forward to see that $U_1\cap V_1=\bar{U_1}$, $U_1\cap V_2=\{0\}$, $U_2\cap
V_2=\bar{U_2}$, and $U_2\cap V_1=\{0\}$, and we have a proof for the first case (a) (and the $t=0$
sub-case for (b)).

Next we suppose that $k_2=k-1$ and $0< t \le k-1$. Let $\{e_1,\ldots e_{2k}\}$ be a basis of $V$
such that $U_1=\langle e_1,\ldots,e_k\rangle$ and $U_2=\langle e_{k-t+1},\ldots,
e_{2k-t}\rangle$. We exhibit in Table \ref{table:nicebisections},
for each $k$ and $t$, a pair of $k$-spaces $V_1$ and $V_2$ forming a
bisection incident with $U_1$ and $U_2$ in $\Proj{k,k}{(0,k_2)}(V)$:
\begin{table}[H]
\begin{center}
\begin{tabular}{c|c|l|l|c|c||c|c}
$k$&$t$&$V_1$&$V_2$&$U_1\cap V_1$&$U_1\cap V_2$&$U_2\cap V_1$&$U_2\cap V_2$\\
\hline
2&1&$\langle
e_3,e_4
\rangle$&$\langle
e_1,e_2+e_4
\rangle$
&$\{0\}$
&$\langle e_1\rangle$
&$\langle e_3\rangle$
&$\{0\}$
\\
3&1&$\langle
e_1+e_6,e_2+e_5,e_4+e_6
\rangle$&$\langle
e_2,e_3,e_4
\rangle$
&$\{0\}$
&$\langle e_2,e_3\rangle$
&$\{0\}$
&$\langle e_3,e_4\rangle$
\\

3&2&$\langle
e_1+e_6,e_4+e_6,e_5
\rangle$&$\langle
e_2,e_3,e_6
\rangle$
&$\{0\}$
&$\langle e_2,e_3\rangle$
&$\{0\}$
&$\langle e_2,e_3\rangle$
\\

4&1&$\langle
e_5,e_6,e_7,e_8
\rangle$&$\langle
e_1,e_2,e_3,e_4+e_8
\rangle$
&$\{0\}$
&$\langle e_1,e_2,e_3\rangle$
&$\langle e_5,e_6,e_7\rangle$
&$\{0\}$
\\

4&2&$\langle
e_1+e_8,e_2+e_7,e_5+e_8,e_6+e_7
\rangle$&$\langle
e_2,e_3,e_4,e_5
\rangle$
&$\{0\}$
&$\langle e_2,e_3,e_4\rangle$
&$\{0\}$
&$\langle e_3,e_4,e_5\rangle$
\\

4&3&$\langle
 e_1+e_8,e_5+e_8,e_6,e_7
\rangle$&$\langle
e_2,e_3,e_4,e_8
\rangle$
&$\{0\}$
&$\langle e_2,e_3,e_4\rangle$
&$\{0\}$
&$\langle e_2,e_3,e_4\rangle$
\\

\hline
\end{tabular}
\caption{Bisections $\{V_1,V_2\}$ incident with $U_1$ and $U_2$ for $k\in\{2,3,4\}$ and $0< t \le
k-1$.}\label{table:nicebisections}
\end{center}
\end{table}
\end{proof}

We prove a technical proposition which leads to a necessary condition for collinear
completeness of $\Proj{m,k}{(k_1,k_2)}(V)$, where $V=V(2k,q)$.

\begin{proposition}\label{thm:c12-gen-imposs}
If $m+k_1-k_2< t\le m-1$, and if $U_1, U_2$ are $m$-subspaces with $U_1\cap U_2$ of dimension $t$,
and $U_1, U_2$ incident in $\Proj{m,k}{(k_1,k_2)}(V)$ with the bisection $V=V_1\oplus V_2$, then
\begin{enumerate}
\item[(a)] for each $i$, $\dim(U_1\cap V_i)=\dim(U_2\cap V_i)$; and
\item[(b)] $3k_2\le k + 1 + m + k_1$.
\end{enumerate}
In particular, if  $\Proj{m,k}{(k_1,k_2)}(V)$ is collinearly complete, then
$3k_2\le k + 1 + m + k_1$.
\end{proposition}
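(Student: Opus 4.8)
The plan is to prove (a) first and then extract (b) from the dimension pattern that (a) pins down; the concluding ``in particular'' is then immediate, since if $\Proj{m,k}{(k_1,k_2)}(V)$ is collinearly complete, any pair of $m$-subspaces $U_1,U_2$ with $\dim(U_1\cap U_2)$ in the stated range is incident with some bisection, and applying the main statement to such a pair yields the inequality.

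For (a) I would argue by contradiction using the projections $\pi_1\colon V\to V_1$ and $\pi_2\colon V\to V_2$ along the decomposition $V=V_1\oplus V_2$. If $k_1=k_2$ there is nothing to prove, so assume $k_1<k_2$ and that the two intersection dimensions disagree; after relabelling we may take $\dim(U_1\cap V_1)=k_1$, $\dim(U_1\cap V_2)=k_2$ and $\dim(U_2\cap V_1)=k_2$, $\dim(U_2\cap V_2)=k_1$. Write $T=U_1\cap U_2$. The two facts I need are $\dim\pi_1(U_1)=m-\dim(U_1\cap V_2)=m-k_2$ (since $\ker(\pi_1|_{U_1})=U_1\cap V_2$) and $\dim\pi_1(T)=t-\dim(T\cap V_2)$. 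Because $T\subseteq U_1$ we have $\pi_1(T)\subseteq\pi_1(U_1)$, and because $T\cap V_2\subseteq U_2\cap V_2$ we have $\dim(T\cap V_2)\le k_1$; hence $t-k_1\le\dim\pi_1(T)\le m-k_2$, giving $t\le m+k_1-k_2$, which contradicts $t>m+k_1-k_2$. Thus the two dimensions coincide, proving (a).

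For (b) I would use (a) to fix the incidence type: after swapping $V_1\leftrightarrow V_2$ if necessary, $\dim(U_1\cap V_2)=\dim(U_2\cap V_2)=k_2$. The inequality should then come from the ambient bound inside $V_2$: the two $k_2$-subspaces $U_1\cap V_2$ and $U_2\cap V_2$ satisfy $\dim\big((U_1\cap V_2)+(U_2\cap V_2)\big)=2k_2-\dim(U_1\cap U_2\cap V_2)\le\dim V_2=k$. Consequently it suffices to establish the upper bound $\dim(U_1\cap U_2\cap V_2)\le m+k_1-k_2+1$, for then $2k_2-k\le\dim(U_1\cap U_2\cap V_2)\le m+k_1-k_2+1$ rearranges exactly to $3k_2\le k+1+m+k_1$.

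The hard part will be precisely this upper bound on the ``$V_2$-trace'' of $U_1\cap U_2$. The projection arguments of (a) only yield lower bounds, namely $\dim(U_1\cap U_2\cap V_2)\ge 2k_2-k$ and $\dim(U_1\cap U_2\cap V_2)\ge t-m+k_2\ge k_1+1$, and these point the wrong way; the trivial upper bound $\dim(U_1\cap U_2\cap V_2)\le k_2$ is too weak to force (b). To get the sharp estimate I would decompose $U_1$ and $U_2$ into their $V_1$-part, $V_2$-part and a diagonal part relative to $V=V_1\oplus V_2$, and exploit both $U_1\neq U_2$ (this distinctness is where I expect the additive ``$+1$'' to originate) and $t\le m-1$ to control how large $U_1\cap U_2$ can be while still living inside $V_2$. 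Concretely I anticipate reducing to the obstructing configuration in which the traces $U_1\cap V_2$ and $U_2\cap V_2$ are as independent as the constraints permit, so that the triple intersection is squeezed down to $m+k_1-k_2+1$; carrying out this reduction cleanly — rather than settling for the harmless pairs whose $V_2$-traces coincide — is the crux of the argument.
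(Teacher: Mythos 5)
Your treatment of part (a) is correct and is essentially the paper's own argument run in the contrapositive: both proofs combine $\dim\pi_1(T)\le\dim\pi_1(U_1)=m-k_2$ with $T\cap V_2\subseteq U_2\cap V_2$ to force $\dim(U_2\cap V_2)>k_1$, hence $=k_2$.

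For part (b) there is a genuine gap, and it is not merely that you leave the ``crux'' unexecuted: the estimate you reduce to, namely $\dim(U_1\cap U_2\cap V_2)\le m+k_1-k_2+1$, is \emph{false} for general $t$ in the stated range, so no decomposition of $U_1,U_2$ into $V_1$-, $V_2$- and diagonal parts can deliver it. For instance, take $k=7$, $m=6$, $k_1=0$, $k_2=5$, let $W_2$ be a $5$-subspace of $V_2$, pick independent $e_1,e_2\in V_1$ and $f\in V_2\setminus W_2$, and set $U_i:=W_2\oplus\langle e_i+f\rangle$. Each $U_i$ meets $V_1$ trivially and $V_2$ in $W_2$, and $t=\dim(U_1\cap U_2)=5=m-1>1=m+k_1-k_2$, yet $\dim(U_1\cap U_2\cap V_2)=5$ while $m+k_1-k_2+1=2$, and indeed $3k_2=15>14=k+1+m+k_1$. (So part (b) as literally stated fails for arbitrary $t$ in the range; only its consequence for collinear completeness is what matters.) The missing idea is to specialise to the single value $t=m+k_1-k_2+1$, the bottom of the range: there the \emph{trivial} bound $\dim(T\cap V_2)\le\dim T=t$ already gives $k\ge 2k_2-\dim(T\cap V_2)\ge 2k_2-t=3k_2-m-k_1-1$, which is (b). That one case suffices for the final assertion, because collinear completeness supplies a pair $U_1,U_2$ with exactly that intersection dimension whenever $k_2\ge k_1+2$; and when $k_2\le k_1+1$ the $t$-range is empty (so your ``immediate'' derivation of the final assertion says nothing) and one must instead note directly that $3k_2\le k_2+1+(k_1+k_2)\le k+1+m\le k+1+m+k_1$.
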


\begin{proof}
Set $\bar m:=m-k_1-k_2$, $\bar t:= m+k_1-k_2+1$, and $T:=U_1\cap U_2$.  The conditions on $t$ imply
that $k_2\ge k_1+2$.  Without loss of generality suppose that, for each $i$, $\dim(U_1\cap
V_i)=k_i$.  Let $\pi_1:V\rightarrow V_1$ be the natural projection. Then $\dim(\pi_1(U_1)) =k_1+\bar
m\ge \dim(\pi_1(T))$. Hence $\dim(T\cap V_2)\ge t - (k_1+\bar m) \ge \bar t - (k_1+\bar m) =
k_1+1$. Since $T\cap V_2\subseteq U_2\cap V_2$, it follows that $\dim(U_2\cap V_2)>k_1$ and hence 
 $\dim(U_2\cap V_2)=k_2$. So part (a) is proved.

Now take $t=\bar t$, and note that the conditions on $t$ hold for this $t$-value. In particular part
(a) holds. Then $k=\dim(V_2)\ge \dim ((U_1\cap V_2)+(U_2\cap V_2)) = 2k_2 -\dim(T\cap V_2)\ge 2k_2
- t = 3k_2-m-k_1-1$. Thus part (b) holds.

We now prove the final assertion. If $k_2\ge k_1+2$, then $t=\bar t \le m-1$ and hence satisfies
the inequalities of the proposition. If $\Proj{m,k}{k_1,k_2}(V)$ is collinearly complete then
suitable $m$-subspaces $U_1, U_2$ exist, and the result follows from part (b). On the other hand, if
$k_2\le k_1+1$, then again $3k_2\le k_2 + 1 + (k_1+k_2) \le k+1+m\le k+1+m+k_1$.
\end{proof}

Now we assume that $m\le k$ and we examine in detail conditions for two $m$-spaces to be incident in
$\Proj{m,k}{(k_1,k_2)}(V)$ with a bisection.

\begin{theorem}\label{thm:c12:gen-poss}
  Let $t$ be an integer satisfying $0\le t\le m-1$, and let $U_1$ and $U_2$ be $m$-subspaces of $V$
  such that $T=U_1\cap U_2$ has dimension $t$.  Then either there exists a bisection incident in
  $\Proj{m,k}{(k_1,k_2)}(V)$ with both $U_1$ and $U_2$, or one of the following holds.
\begin{enumerate}
\item[(i)] $q=2$, $k_1=t=0$, $k=m=k_2+1\ge1$, or

\item[(ii)] $k + 1 + m + k_1<3k_2$ and $t>  m+k_1-k_2$.
\end{enumerate}
\end{theorem}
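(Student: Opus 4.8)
The plan is to fix $U_1$ and $U_2$ with $T:=U_1\cap U_2$ of dimension $t$, record the decomposition
\[
V=P_1\oplus T\oplus P_2\oplus C,\qquad U_i=T\oplus P_i,\quad \dim P_i=m-t,\quad \dim C=2k-2m+t\ge 0
\]
(the last inequality uses $m\le k$), and then to build an explicit bisection $\{W_1,W_2\}$ meeting each $U_i$ in subspaces of dimensions $k_1$ and $k_2$. I would split into two regimes according to the threshold $m+k_1-k_2$ governing Proposition~\ref{thm:c12-gen-imposs}. When $t\le m+k_1-k_2$, condition (ii) cannot occur, and I would aim for \emph{opposite} orderings, placing a $k_1$-subspace of $U_1$ and a $k_2$-subspace of $U_2$ in $W_1$ (and the reverse in $W_2$); when $t>m+k_1-k_2$, Proposition~\ref{thm:c12-gen-imposs}(a) forces \emph{both} $U_i$ to use the \emph{same} ordering, so $W_1$ must contain a $k_1$-subspace of each $U_i$ and $W_2$ a $k_2$-subspace of each.

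For the construction I would generalise Proposition~\ref{prop:c12-diag}: first choose subspaces $T_1,T_2\subseteq T$ (of dimensions $\tau_1,\tau_2$) to lie in $W_1$ and $W_2$ respectively, leaving the rest of $T$ diagonal; then enlarge these by suitable subspaces of $P_1$ and $P_2$ so that each $U_i\cap W_1$ and $U_i\cap W_2$ reaches its prescribed dimension. The leftover directions of $P_1,P_2$ together with $C$ form a reservoir, which I would split into pairs of disjoint diagonal subspaces by Lemma~\ref{lem:c12-2diag} and adjoin to complete $W_1,W_2$ to dimension $k$. A point that must be handled with care---already visible in a six-dimensional example with $m=k=3$---is that a completing piece of $W_1$ need not lie in $C$ together with the diagonal reservoir: it may have to borrow directions from the part of $U_1+U_2$ committed to $W_2$ in order to force the residual parts of $U_1,U_2$ to be genuinely diagonal. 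This extra freedom is exactly what lets the slack in the inequality $3k_2\le k+1+m+k_1$ suffice.

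Verifying that $V=W_1\oplus W_2$ and that all four intersection dimensions are correct is then routine, each committed intersection being contained in the appropriate $U_i$ by construction and the diagonal parts contributing nothing. The remaining work is a feasibility check for the integer parameters $\tau_1,\tau_2$ and the $P_i$-dimensions, which I would reduce to a short system of inequalities and solve using $0\le k_1\le k_2$ and $k_1+k_2\le m\le k$. I expect the opposite-ordering regime $t\le m+k_1-k_2$ to be unconditionally feasible, while in the same-ordering regime the binding case is $t=m+k_1-k_2+1$, where the committed part of $W_2$ has dimension $2k_2-\tau_2$ and the requirement that it fit inside $W_2$ is compatible with the other bounds precisely when $3k_2\le k+1+m+k_1$; this is the converse of the bound $k\ge 3k_2-m-k_1-1$ obtained in Proposition~\ref{thm:c12-gen-imposs}(b).

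The main obstacle is the field of order two. By Lemma~\ref{lem:c12-2diag}, a $(1,1)$-decomposition over $\F_2$ admits only one diagonal subspace, so the construction can stall whenever a block of the reservoir (or of $P_1\oplus P_2$) has both summands one-dimensional and $q=2$. The heart of the proof is to track these degenerate blocks and to repair them---by merging parts and using maximal diagonal subspaces, as in the $q=2$ cases of Proposition~\ref{prop:c12-diag} and the explicit bisections of Lemma~\ref{lem:smallcase}---showing that the sole configuration the general method cannot resolve is $q=2$, $k_1=t=0$, $k=m=k_2+1$, namely exception (i). Outside (i) and (ii) the bisection is therefore produced.
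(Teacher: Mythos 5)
Your proposal follows essentially the same route as the paper's proof: the split at the threshold $t\le m+k_1-k_2$ versus $t>m+k_1-k_2$ (with opposite orderings in the first regime and same orderings forced by Proposition~\ref{thm:c12-gen-imposs}(a) in the second), the construction of the bisection by committing pieces of $T$, $P_1$, $P_2$ and completing with disjoint diagonal subspaces via Lemma~\ref{lem:c12-2diag} (including the ``borrowed'' diagonals across parts already committed to the other half), the appearance of the bound $3k_2\le k+1+m+k_1$ exactly at $t=m+k_1-k_2+1$, and the repair of the degenerate $q=2$ blocks via Proposition~\ref{prop:c12-diag} and Lemma~\ref{lem:smallcase}, isolating exception (i). The plan is correct, though of course the paper's version carries out the four-way case analysis on $t$ (relative to $k_1$, $2k_1$, $m+k_1-k_2$, and $k_2$) in full detail.
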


\begin{proof}We investigate separately various ranges of $t$-values.

\subsubsection{Case $t\le k_1$} For $i=1,2$,  choose $U_{i2}$ to be a $k_i$-subspace
of $U_i$ containing $T$. Then choose a $(k_{3-i})$-subspace $U_{i1}$ of $U_i$ disjoint from $U_{i2}$
(this is possible, since $m\ge k_1+k_2$).  The subspaces $B_j:=U_{1j}+ U_{2j}$, for $j=1,2$, are
disjoint and have dimension $k_1+k_2$ and $k_1+k_2-t$, respectively.  We wish to extend the $B_j$ to
$k$-dimensional subspaces $V_j$ (respectively) such that $V_j\cap U_i=U_{ij}$ (for each $i,j$) and
$V_1\cap V_2=\{0\}$. Let $B=B_1+B_2$, of dimension $2(k_1+k_2)-t$, choose a complement $\bar V$ so
$V=B\oplus \bar V$, and let $\bar{m}:=m-k_1-k_2$.  If $\bar m=0$, then each $U_i$ lies in $B$, and
we simply extend the $B_i$ by subspaces of $\bar V$ of the appropriate dimensions.  Suppose that
$\bar m>0$. Then $U_1$ and $U_2$ project to disjoint $\bar m$-dimensional subspaces $\bar{U_1}$ and
$\bar{U_2}$ of $\bar V$.  It follows from Lemma~\ref{lem:c12-2diag} that, provided $(\dim(\bar
V),q)\ne(2,2)$, we can find a decomposition $\bar V=\bar{V_1}\oplus \bar{V_2}$, with subspaces
$\bar{V_1}, \bar{V_2}$ of dimensions $k-k_1-k_2$ and $k-k_1-k_2+t$ respectively, that are disjoint
from $\bar{U_1}$ and $\bar{U_2}$. Define $V_j=B_j\oplus \bar V_j$ for $j=1,2$. Then $U_i\cap
V_j=U_i\cap B_j = U_{ij}$ for all $i,j$, so both $U_1, U_2$ are incident with the bisection
$V=V_1\oplus V_2$.

Since here $\bar m>0$, the exceptional case $(\dim(\bar V),q)=(2,2)$ arises only if $k=m=k_1+k_2+1$
and $t=0$. (To see this, we have $2=\dim(\bar V)=2k - 2(k_1+k_2)+t$ so $k-k_1-k_2=1-t/2\le 1$, while
on the other hand $1\le \bar m = m-k_1-k_2\le k-k_1-k_2$, and hence $t=0$ and $k=m=k_1+k_2+1$.) In
this case, we may choose the complement $\bar V$ to be $\bar V=\la e_1,e_2\ra$ with $e_1\in U_1,
e_2\in U_2$. Suppose first that $k_2\ge k_1>0$ and choose $f_i\in U_{i1}\setminus\{0\}$ for
$i=1,2$. Then define $\bar V_1:=\la e_1+e_2\ra, \bar V_2:=\la e_1+f_1+f_2\ra$, and
$V_i:=B_i\oplus\bar V_i$ for $i=1,2$. We again have $U_i\cap V_j=U_i\cap B_j = U_{ij}$ for all
$i,j$, so both $U_1, U_2$ are incident with the bisection $V=V_1\oplus V_2$. Now assume that $k_1=0$
but $k_2>0$. In this case write $U_i=U_{ii}\oplus\la e_i\ra$, and let $f_i\in
U_{ii}\setminus\{0\}$. Define $V_i:=U_{ii}\oplus \la f_i+e_{3-i}\ra$ for $i=1,2$. Then $U_i\cap
V_i=U_{ii}$ and $U_i\cap V_{3-i}=0$ for $i=1,2$, so both $U_1, U_2$ are incident with the bisection
$V=V_1\oplus V_2$. This leaves the case $k_1=k_2=t=0$, which is dealt with in
Proposition~\ref{prop:c12-diag}, producing a bisection incident with $U_1, U_2$ with the single
exception $(m,k,q)=(1,1,2)$, as in (i) (with $k_2=0$).

\subsubsection{Case $k_1<t\le 2k_1$} Choose $U_{i2}$ to be a $k_i$-subspace of $U_i$ such that
$U_{12}\subset T$ and $U_{22}\cap T=U_{12}$. (This is possible since $m\ge k_2+k_1\ge k_2+t-k_1$.)
Then choose a $(k_{3-i})$-subspace $U_{i1}$ of $U_i$ disjoint from $U_{i2}$ such that $U_{11}\cap
T=U_{21}\cap T$ has dimension $t-k_1$ (which again is possible since $m\ge k_1+k_2$ and $t-k_1\le
k_1\le k_2$). The subspaces $B_j:=U_{1j}+ U_{2j}$, for $j=1,2$, are disjoint by construction and
have dimensions $2k_1+k_2-t$ and $k_2$, respectively (note $B_2=U_{22}$ and note that
$2k_1+k_2-t<k_1+k_2\le k$). As before we extend the $B_j$ to $k$-dimensional subspaces $V_j$. Let
$B=B_1+B_2$, of dimension $2(k_1+k_2)-t$, choose a complement $\bar V$ so $V=B\oplus \bar V$, and
let $\bar{m}:=m-k_1-k_2$. If $\bar m=0$, then each $U_i$ lies in $B$, and we simply extend the $B_i$
by subspaces of $\bar V$ of the appropriate dimensions. Suppose that $\bar m>0$. Then $U_1$ and
$U_2$ project to disjoint $\bar m$-dimensional subspaces $\bar{U_1}$ and $\bar{U_2}$ of $\bar V$. It
follows from Lemma~\ref{lem:c12-2diag} that, provided $(\dim(\bar V),q)\ne(2,2)$, we can find a
decomposition $\bar V=\bar{V_1}\oplus \bar{V_2}$, with subspaces $\bar{V_1}, \bar{V_2}$ of
dimensions $k-2k_1-k_2+t$ and $k-k_2$ respectively, that are disjoint from $\bar{U_1}$ and
$\bar{U_2}$. Define $V_j=B_j\oplus \bar V_j$ for $j=1,2$. Then $U_i\cap V_j=U_i\cap B_j = U_{ij}$
for all $i,j$, so both $U_1, U_2$ are incident with the bisection $V=V_1\oplus V_2$. We claim that
the exceptional case does not occur here: suppose that $\bar m>0$ and $\dim(\bar V)=2.$ Then
$2=\dim(\bar V)=2k - 2(k_1+k_2)+t$, so $k-k_1-k_2=1-t/2\le 1$, while on the other hand $2=\dim(\bar
V)\ge 2\bar m\ge 2$, so $1=\bar m=m-k_1-k_2\le k-k_1-k_2$. Hence $k=m=k_1+k_2+1$ and $t=0$, which
contradicts our assumption that $t>k_1\ge 0$.

\medskip
From now on we assume that $t>2k_1$. Note that $k_1+k_2\le m$ and so $m+k_1-k_2\ge 2k_1$.

\subsubsection{Case $2k_1<t\le m+k_1-k_2$ } 
Choose $U_{11}$, $U_{22}$ to be disjoint $k_1$-subspaces of $T$.  Then choose a $k_{2}$-subspace
$U_{12}$ of $U_1$ such that $U_{12}\cap T=U_{22}$, and hence $U_{12}$ is disjoint from $U_{11}$.
This is possible since $\dim(U_1/T)=m-t\ge k_2-k_1$.  Similarly choose a $k_{2}$-subspace $U_{21}$
of $U_2$ such that $U_{21}\cap T=U_{11}$, and hence $U_{21}$ is disjoint from $U_{22}$.  Note that
$U_{12}\cap U_{21}=0$.

Next choose a $(t-2k_1)$-subspace $T_3$ of $T$ such that $T=U_{11}\oplus U_{22}\oplus T_3$, and a
$(2k+2k_1-2k_2-t)$-subspace $\bar V$ of $V$ such that $V=U_{12}\oplus U_{21}\oplus T_3\oplus
\bar{V}$.
Since $T\cap\bar V=0$, $U_1$ and $U_2$ project to disjoint $\bar m$-subspaces $\bar U_1, \bar U_2$
of $\bar V$, where $\bar m=m+k_1-k_2-t\ge0$. Thus $\bar V$ contains $\bar U:=\bar{U_1}\oplus
\bar{U_2}$, and we note that $\dim(\bar V)-2\bar m = 2(k-m)+t\ge t$.  Choose a $(t-2k_1)$-subspace
$T_1$ of $\bar V$ disjoint from $\bar U$ (which is possible since $\dim(\bar V)-2\bar m \ge t\ge
t-2k_1$), and choose a diagonal $(t-2k_1)$-subspace $T_2$ of $T_1\oplus T_3$.  Note that $T_3$ is
then a diagonal subspace of $T_1\oplus T_2$. Also choose disjoint $(k-m+k_1)$-subspaces $S_1, S_2$
of $\bar V$ such that $\bar V=\bar U\oplus T_1\oplus S_1\oplus S_2$.

If $\bar m=0$, define $V_1=U_{21}\oplus T_1\oplus S_1$ and $V_2=U_{12}\oplus T_2\oplus S_2$. These
are disjoint $k$-subspaces so $V=V_1\oplus V_2$ is a bisection, and $U_j\cap V_j= U_{jj}$ of
dimension $k_1$ while $U_j\cap V_{3-j}=U_{3-j,j}$ of dimension $k_2$.
Now assume that $\bar m>0$. Suppose that we are not in the case where both $q=2$ and $r:=\dim(\bar
U_j\oplus S_j) = \bar m + k-m+k_1 = k+2k_1-k_2-t$ is equal to $1$. Then by
Lemma~\ref{lem:c12-2diag}, there exist disjoint diagonal $r$-subspaces $R_1, R_2$ of $(\bar
U_1\oplus S_1) \oplus (\bar U_2\oplus S_2)$. Note that $\bar U\oplus S_1\oplus S_2 = R_1\oplus R_2$,
In this case define $V_1=U_{21}\oplus T_1\oplus R_1$ and $V_2=U_{12}\oplus T_2\oplus R_2$. Again
these are disjoint $k$-subspaces and $U_j\cap V_j= U_{jj}$ has dimension $k_1$ while $U_j\cap
V_{3-j}=U_{3-j,j}$ has dimension $k_2$. The exceptional case is: $q=2$, $r=k+2k_1-k_2-t=1$. Since
$r\ge \dim(\bar U_1)=\bar m\ge 1$ this implies that $1=r=\bar m=m+k_1-k_2-t$. This however implies
that $m=k+k_1$, and since $m\le k$ we obtain $m=k=k_2+t+1, k_1=0, q=2$. Suppose first that also
$k_2=0$. Then by Proposition \ref{prop:c12-diag}, either there exists a bisection incident in
$\Proj{m,k}{(k_1,k_2)}(V)$ with both $U_1$ and $U_2$, or $m=k=1$ and, since $t\le m-1$, also $t=0$
so part (i) holds (with $k_2=0$). Now assume that $k_2>0$. If also $t>0$ then, by Lemma
\ref{lem:smallcase}, there exists a bisection incident in $\Proj{m,k}{(k_1,k_2)}(V)$ with both $U_1$
and $U_2$. On the other hand if $t=0$ then part (i) holds (with $k_2>0$).

\medskip
From now on we assume that $t>m+k_1-k_2$, and of course $t\le m-1$. If also $3k_2> k + 1 + m + k_1$,
then part (ii) holds. Thus we assume that $3k_2\le k + 1 + m + k_1$. This is the last, rather
complicated, case.

\subsubsection{Case $t> m+k_1-k_2$ and  $3k_2\le k + 1 + m + k_1$}

We will show that there exists a bisection $V=V_1\oplus V_2$ incident with both $U_1$ and $U_2$. By
Proposition~\ref{thm:c12-gen-imposs}, for each $i$, we have $\dim(U_1\cap V_i)=\dim(U_2\cap V_i)$.
As we noted above, $m+k_1-k_2\ge 2k_1$, so here $t > 2k_1$.

\subsubsection{Sub-case $t\le k_2$:}
\begin{center}
\begin{tabular}{m{3.5cm}m{12.5cm}}
\begin{tikzpicture}
\draw (0,0) rectangle (2,2);
\draw (1,-1) rectangle (3,1);
\draw [fill=red, opacity=0.2] (0,0) rectangle (1,1);
\draw [fill=red, opacity=0.2] (1,-1) rectangle (2,0);
\node at (0.3,1.5) {$\bar{U}_1$};
\node at (2.7,-0.7) {$\bar{U}_2$};
\node at (1.2,0.7) {$T$};
\node at (0.3,0.3) {$V_{22}$};
\node at (1.3,-0.7) {$V_{21}$};
\end{tikzpicture}
&
\begin{minipage}{12cm}
First we decompose $U_1=T\oplus V_{21}\oplus \bar{U}_1$, $U_2=T\oplus V_{22}\oplus \bar{U}_2$, where
$\dim\bar{U}_i=m-k_2$ and $\dim V_{2i}=k_2-t$. Let $T_2:=V_{21}+V_{22}+T$ and note that $\dim
T_2=2k_2-t$.  Since $m\le k$, we can decompose $V=C\oplus (U_1+U_2)$ where $\dim C=2k-2m+t$.  Now
$k-2k_2+t>k-2k_2+(m+k_1-k_2)$, and since $3k_2\le k + 1 + m + k_1$, we have
$$k-2k_2+t>k-(k+1+m+k_1)+m+k_1=-1.$$
Therefore, we can further decompose $C$ as $C_1\oplus C_2$ where
$\dim C_1=k+2k_2-2m$ and $\dim C_2=k-2k_2+t$.
Let $V_2:=C_2\oplus T_2$ and observe that $\dim(V_2)=k$ and
$\dim(U_i\cap V_2)=\dim(V_{2i}\oplus T)=k_2$ for $i=1,2$.
\end{minipage}
\end{tabular}
\end{center}

Now we construct $V_1$.  Since $k_1\le m-k_2$, there exist $k_1$-subspaces $V_{11}\le \bar{U}_1$ and
$V_{12}\le \bar{U}_2$.  Again, since $k_1+k_2\le m$, we have $\dim C_1=k+2k_2-2m\le k-2k_1$. Then
$C_1\oplus V_{11}\oplus V_{12}$ is a $(k+2(k_1+k_2-m))$-dimensional subspace meeting $U_1$ and $U_2$
in $k_1$-dimensional subspaces, and disjoint from $V_2$. If $m=k_1+k_2$ we take $V_1$ to be this
subspace. On the other hand if $k_1+k_2< m$, then we decompose $\bar{U_i}=W_i+V_{1i}$, where $\dim
W_i=m-k_1-k_2$, choose a diagonal $(m-k_1-k_2)$-subspace $W$ of $W_1\oplus W_2$, and take $V_1$ as
the $k$-subspace $V_1:=V_{11}\oplus V_{12}\oplus C_1\oplus W$. In either case, $\dim(V_1\cap
U_i)=\dim(V_{1i})=k_1$, for $i=1,2$, so the bisection $\{V_1,V_2\}$ is incident with both $U_1$ and
$U_2$.

\subsubsection{Sub-case $t>k_2$:}

First we decompose $U_1+U_2$ as follows:
\begin{equation}\label{U12}
U_1+U_2=\bar{U}_1\oplus T_{13}\oplus T_2\oplus \bar{U}_2
\end{equation}
where $\dim \bar{U}_1=\dim\bar{U}_2=m-t$, $\dim T_{13}=t-k_2$, $\dim T_2=k_2$, and $U_1\cap
U_2=T_{13}\oplus T_2$. Let $C$ be a complement of $U_1+U_2$ in $V$, and decompose it as follows:
$$
C=C_1\oplus C_2\oplus C_3
$$ where $\dim C_1=m-t$, $\dim C_2=k-k_2-m+t$, $\dim C_3=k+k_2-2m+t$.  Each of theses subspaces is
well-defined and non-zero because $\dim C=2k-2m+t$, and
\begin{enumerate}
\item[(i)] $t<m$,
\item[(ii)] $k_2<t$ and $m\le k$ imply that $k-k_2-m+t> 0$, and
\item[(iii)] $(k+k_2)-2m+t\le  2k-2m+t$, and the assumption $t> m+k_1-k_2$ implies that $k+k_2-2m+t> 0$
(since $k+k_2-2m+t> k_1-m+k\ge 0$).
\end{enumerate}
Let $V_2:=C_1\oplus C_2\oplus T_2$ and note that $\dim V_2=k$ and $\dim(V_2\cap U_i)=k_2$ for $i=1,2$.
Now we construct $V_1$. There are two sub-cases.

\subsubsection*{(i) \underline{$t\ge k_1+k_2$}:}

\begin{center}
\begin{tabular}{m{4.8cm}m{\mylen}}
{\footnotesize
\begin{tikzpicture}[scale=0.32]
\draw (0,0) rectangle (6,6);
\draw (1,1) rectangle (7,7);
\draw (7,7) rectangle (14,14);

\draw[dashed] (0,0) rectangle (1,1);
\draw[dashed] (6,6) rectangle (7,7);

\draw[dashed] (1,1) rectangle (2,2);
\draw[dashed, fill=red, opacity=0.1] (2,2) rectangle (5,5);
\draw[dashed] (5,5) rectangle (6,6);

\draw[dashed, fill=red, opacity=0.1] (7,7) rectangle (8,8); 
\draw[dashed, fill=red, opacity=0.1] (8,8) rectangle (11,11);  
\draw[dashed] (11,11) rectangle (14,14); 

\node at (0.5,0.5) {$\bar{U}_1$};
\node at (6.5,6.5) {$\bar{U}_2$};
\node at (1.5,1.5) {$T_1$};
\node at (3.5,3.5) {$T_2$};
\node at (5.5,5.5) {$T_3$};
\node at (7.5,7.5) {$C_1$};
\node at (9.5,9.5) {$C_2$};
\node at (12.5,12.5) {$C_3$};

\end{tikzpicture}
}
&
\begin{minipage}{\mylen}
Since $t\ge k_1+k_2$, we may decompose $T_{13}$ as $T_1\oplus T_3$ where $\dim T_1=k_1$ and $\dim
T_3=t-k_1-k_2$.  First suppose $k-2m+t\le 0$.  Let $D_{\bar{U}_1,C_1}$, $D_{\bar{U}_2,C_1}$,
$D_{T_2,C_3}$, and $D_{T_3,C_2}$ be maximal diagonal subspaces of $\bar{U}_1\oplus C_1$,
$\bar{U}_2\oplus C_1$, $T_2\oplus C_3$, and $T_3\oplus C_2$ respectively.  Define
$$
V_1:=T_1\oplus D_{\bar{U}_1, C_1}\oplus D_{\bar{U}_2, C_1}\oplus D_{T_2, C_3}\oplus D_{T_3,C_2}.
$$ Now $\dim D_{\bar{U}_i, C_1}=m-t$, for each $i$, and $k-2m+t\le 0$ implies that $\dim D_{T_2,
  C_3}=\min\{k_2, k+k_2-2m+t\}=k+k_2-2m+t$.  Also $\dim D_{T_3,C_2}=t-k_1-k_2$ since $-k_1\le
k-m$. Thus $\dim V_1=k$. Finally, we observe that $V_1$ meets $U_1$ and $U_2$ in $T_1$, and hence
$U_1$ and $U_2$ are each incident with the bisection $\{V_1,V_2\}$.

\end{minipage}
\end{tabular}
\end{center}

Now suppose $k-2m+t> 0$, and decompose $C_3$ as $C_{31}\oplus C_{32}$, where $\dim C_{31}=k_2$ and
$\dim C_{32}=k-2m+t$. Our choice for $V_1$ is similar to before, except we replace ``$D_{T_2,
  C_3}$'' (which has dimension $k_2$ in this case!) with $D_{T_2, C_{31}}\oplus C_{32}$ where
$D_{T_2,C_{31}}$ is a maximal diagonal subspace of $T_2\oplus C_{31}$.  Define
$$
V_1:=T_1\oplus D_{\bar{U}_1, C_1}\oplus D_{\bar{U}_2, C_1}\oplus D_{T_2, C_{31}}\oplus C_{32}\oplus D_{T_3,C_2}.
$$ Note that we still have $\dim(D_{T_2, C_{31}}\oplus C_{32})=k+k_2-2m+t$ and $\dim
D_{T_3,C_2}=t-k_1-k_2$, and so again $\dim(V_1)=k$. Also $V_1\cap U_1=V_1\cap U_2=T_1$, and
therefore, $U_1$ and $U_2$ are each incident with the bisection $\{V_1,V_2\}$.

\subsubsection*{(ii) \underline{$k_2<t<k_1+k_2$:}}

\begin{center}
\begin{tabular}{m{6cm}m{\mylentwo}}
{\footnotesize
\begin{tikzpicture}[scale=0.25]
\draw (0,0) rectangle (12,12);
\draw (4,4) rectangle (16,16);
\draw (16,16) rectangle (24,24);

\draw[dashed] (0,0) rectangle (3,3);  
\draw[dashed] (3,3) rectangle (4,4);  
\draw[dashed] (12,12) rectangle (15,15); 
\draw[dashed] (15,15) rectangle (16,16); 
\draw[dashed] (4,4) rectangle (6,6);  
\draw[dashed, fill=red, opacity=0.1] (6,6) rectangle (12,12); 

\draw[dashed, fill=red, opacity=0.1] (16,16) rectangle (20,20); 
\draw[dashed, fill=red, opacity=0.1] (20,20) rectangle (21,21);  
\draw[dashed] (21,21) rectangle (24,24); 

\node at (1.5,1.5) {$P_1$};
\node at (13.5,13.5) {$P_2$};
\node at (3.5,3.5) {$V_{11}$};
\node at (15.5,15.5) {$V_{12}$};
\node at (5,5) {$T_{13}$};
\node at (8.5,8.5) {$T_2$};
\node at (18,18) {$C_1$};
\node at (20.5,20.5) {$C_2$};
\node at (22.5,22.5) {$C_3$};
\end{tikzpicture}}
&
\begin{minipage}{\mylentwo}
We first refine the decomposition of $U_1+U_2$ in \eqref{U12} by decomposing $\bar{U}_i=P_i\oplus
V_{1i}$ (for $i=1,2$) where $\dim P_i=m-k_1-k_2\ge 0$ and $\dim V_{1i}=k_1+k_2-t$. So
$$
U_1+U_2=P_1\oplus V_{11}\oplus T_{13}\oplus T_2\oplus V_{12}\oplus P_2.
$$

Suppose $k-2m+t\le 0$, so that $\dim C_3 = k+k_2-2m+t\le k_2 = \dim T_2$, and let $D_{T_2,C_3}$ be a
maximal diagonal subspace of $T_2\oplus C_3$, which will therefore have dimension $k_2+k-2m+t$.
Next choose a maximal diagonal subspace $P_3$ of $P_1\oplus P_2$, so $\dim P_3 = m-k_1-k_2$ and
$P_1\oplus P_2 = P_1\oplus P_3$.  Also choose a maximal diagonal subspace $P_4$ of $P_1\oplus
(C_1\oplus C_2)$, and note that $\dim P_4 = \min\{m-k_1-k_2,k-k_2\}=m-k_1-k_2$.  Observe that
$U_i\cap(P_3\oplus P_4)=0$ for each $i$. Define
$$
V_1:=V_{11}\oplus V_{12}\oplus T_{13}\oplus  D_{T_2, C_3}\oplus P_3\oplus P_4
$$
of dimension $\dim V_1=2(k_1+k_2-t)+(t-k_2)+(k_2+k-2m+t)+2(m-k_1-k_2)=k.$
Finally, since $V_1\cap U_i = V_{1i}\oplus T_{13}$ of dimension $k_1$, for
each $i$, both  $U_1$ and $U_2$ are incident with the bisection $\{V_1,V_2\}$.
\end{minipage}
\end{tabular}
\end{center}

Now suppose $k-2m+t> 0$, and decompose $C_3$ as $C_{31}\oplus C_{32}$, where $\dim C_{31}=k_2$ and
$\dim C_{32}=k-2m+t$.  Our construction of $V_1$ is similar to the above, but we replace $
D_{T_2\oplus C_3}$ (which has dimension $k_2$ in this case) with $D_{T_2, C_{31}}\oplus C_{32}$
(which has dimension $k_2+k-2m+t$), where $D_{T_2,C_{31}}$ is a maximal diagonal subspace of
$T_2\oplus C_{31}$.  Define
$$
V_1:=V_{11}\oplus V_{12}\oplus T_{13} \oplus D_{T_2, C_{31}}\oplus C_{32}\oplus P_3\oplus P_4.
$$
Then $\dim V_1=k$ and $V_1\cap U_i = V_{1i}\oplus T_{13}$ of dimension $k_1$, for
each $i$, so both  $U_1$ and $U_2$ are incident with the bisection $\{V_1,V_2\}$.

\end{proof}

\subsubsection{Proof of Theorem~\ref{thm:mainC1C2C1}}

We use the notation from Notation~\ref{notation:c12-0}.  By Lemma~\ref{prop:geom-flag-trans}~(a),
$G=PQP$ if and only if $\Proj{m,k}{(k_1,k_2)}(V)$ is collinearly complete. First we note that, by
Proposition~\ref{prop:c12-diag}, if $(q,m,k,k_1,k_2)=(2,1,1,0,0)$, then $\Proj{m,k}{(k_1,k_2)}(V)$
is not collinearly complete. Thus from now on we assume that $(q,m,k,k_1,k_2)\ne(2,1,1,0,0)$.

Next, suppose that the theorem holds in the case $m\le k$, that is, for these values of $m$,
$\Proj{m,k}{(k_1,k_2)}(V)$ is collinearly complete if and only if $3k_2\le k+1+m+k_1$ and
$(q,m,k,k_1,k_2)\ne(2,1,1,0,0)$. Suppose further that $k<m<2k$, and let $\overline{m}=2k-m$,
$\overline{k}_{j}=k-m+k_{j}$ (for $j=1,2$), and $\J=(\overline{k}_{1}, \overline{k}_{2})$.  Note
that $\overline{m}<k$ and $\overline{k}_{1}\le \overline{k}_{2}$.  By
Proposition~\ref{prop:reduction}, $\Proj{m,k}{(k_1,k_2)}(V)$ is collinearly complete if and only if
$\Proj{\m,k}{\J}(V)$ is collinearly complete. Since $\overline{m} <k$, the latter holds, by our
assumption, if and only if $3\overline{k}_2\le k+1+\overline{m}+\overline{k}_1$, which is equivalent
to $3k_2\le k+1+m+k_1$.
Thus it is sufficient to prove the theorem for the case $m\le k$, and we assume from now on that
$m\le k$ as well as $(q,m,k,k_1,k_2)\ne(2,1,1,0,0)$.

Assume first that $\Proj{m,k}{(k_1,k_2)}(V)$ is collinearly complete. We must prove that $3k_2\le
k+1+m+k_1$. Since $\Proj{m,k}{(k_1,k_2)}(V)$ is collinearly complete, there exist $m$-subspaces
$U_1,$ $U_2$ such that $U_1\cap U_2$ has dimension $t:=m-1$, and $U_1,$ $U_2$ are incident in
$\Proj{m,k}{(k_1,k_2)}(V)$ with a bisection. If $t>m+k_1-k_2$, then by
Proposition~\ref{thm:c12-gen-imposs} we conclude that $3k_2\le k+1+m+k_1$ as required. Suppose then
that $m-1=t\le m+k_1-k_2$. Then $k_2\le k_1+1$, and again we find $3k_2\le k_2+1+(k_1+k_2)\le
k+1+m\le k+1+m+k_1$.

Conversely suppose that $3k_2\le k+1+m+k_1$ and $(q,m,k,k_1,k_2)\ne(2,1,1,0,0)$. Our task is to
prove that $\Proj{m,k}{(k_1,k_2)}(V)$ is collinearly complete. To do this, we must show that, for
each $t$ satisfying $0\le t\le m-1$, and for $m$-subspaces $U_1,$ $U_2$ satisfying $\dim(U_1\cap
U_2)=t$, both $U_1$ and $U_2$ are incident in $\Proj{m,k}{(k_1,k_2)}(V)$ with a bisection.
Suppose first that one of (i) $(q,k_1)\ne(2,0)$, or (ii) $m<k$, or (iii) $m=k\ne k_2=1$ holds. Then
the required subspaces and bisection exist for each $t$ by Proposition~\ref{thm:c12-gen-imposs}, so
$\Proj{m,k}{(k_1,k_2)}(V)$ is collinearly complete.
Thus we may assume that $q=2, k_1=0$, $m=k=k_2+1$, in addition to $3k_2\le k+1+m+k_1$ and
$(q,m,k,k_1,k_2)\ne(2,1,1,0,0)$.  This means that $k_2>0$ (since $(q,m,k,k_1,k_2)\ne(2,1,1,0,0)$)
and the inequality $3k_2\le k+1+m+k_1$ is equivalent to $k\le4$. Thus $k=2,3$ or $4$, and the
hypotheses of Lemma~\ref{lem:smallcase}~(b) all hold. Finally, it follows from
Lemma~\ref{lem:smallcase} that $\Proj{k,k}{(0,k-1)}(V)$ is collinearly complete for these values of
$k$.  This completes the proof of Theorem \ref{thm:mainC1C2C1}.

\subsection{Concurrent completeness}\label{sec:gl-c21}

\subsubsection{Some fundamental results on concurrent completeness}\label{sec:c21-nonexistence}

\begin{lemma}\label{lemma:c21-main-noexist}
Let $m$ and $k$ be positive integers such that $1\le m\le k$, and let $k_1,k_2$ be positive integers
such that $k_1\le k_2$.  If $k_2> m/2$ and $(k,q)\ne (1,2)$, then $\Proj{m,k}{(k_1,k_2)}(V)$ is not
concurrently complete.
\end{lemma}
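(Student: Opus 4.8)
The plan is to exhibit two distinct lines of $\Proj{m,k}{(k_1,k_2)}(V)$ incident with no common point, which by definition contradicts concurrent completeness. First I would record a dimension fact: for any point $U$ incident with a bisection $\{V_1,V_2\}$, the subspaces $U\cap V_1$ and $U\cap V_2$ are disjoint (since $V_1\cap V_2=0$), so $k_1+k_2\le m$; combined with $k_2>m/2$ this gives $k_1\le m-k_2<k_2$. In particular $k_1\ne k_2$, so exactly one of the two parts of the bisection meets $U$ in a subspace of dimension $k_2$. I call this the \emph{major part} of the bisection relative to $U$.

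The key step is the following observation. Suppose for contradiction that some $m$-subspace $U$ is incident with two bisections $\{V_1,V_2\}$ and $\{W_1,W_2\}$, and let $A\in\{V_1,V_2\}$ and $B\in\{W_1,W_2\}$ be the corresponding major parts, so that $\dim(U\cap A)=\dim(U\cap B)=k_2$. Since $U\cap A$ and $U\cap B$ are two $k_2$-subspaces of the $m$-dimensional space $U$, and $k_2>m/2$, they meet nontrivially:
\[
\dim(A\cap B)\ge \dim\big((U\cap A)\cap(U\cap B)\big)\ge 2k_2-m>0.
\]
Hence $A\cap B\neq 0$. Taking the contrapositive, if the two bisections satisfy $V_i\cap W_j=0$ for all $i,j\in\{1,2\}$ (a \emph{general position} condition), then no point can be incident with both.

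It remains to construct such a general-position pair, and here the hypothesis $(k,q)\ne(1,2)$ enters. Fixing any bisection $\{V_1,V_2\}$, I would apply Lemma~\ref{lem:c12-2diag} to $V=V_1\oplus V_2$ with $r=k$: since $\max\{\dim V_1,\dim V_2\}=k$ and $(k,q)\ne(1,2)$, there exist two disjoint diagonal $k$-subspaces $W_1,W_2$ of $V$. Each $W_j$ meets both $V_1$ and $V_2$ trivially by the definition of a diagonal subspace, while $W_1\cap W_2=0$ together with $\dim W_1=\dim W_2=k$ forces $V=W_1\oplus W_2$; thus $\{W_1,W_2\}$ is a bisection with $V_i\cap W_j=0$ for all $i,j$. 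This bisection differs from $\{V_1,V_2\}$ (for example $W_1\cap V_1=0$ while $V_1\cap V_1=V_1\ne0$), so by the previous paragraph $\{V_1,V_2\}$ and $\{W_1,W_2\}$ share no incident point, and $\Proj{m,k}{(k_1,k_2)}(V)$ is not concurrently complete.

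I expect the only genuine obstacle to be the existence of this general-position pair; the remaining steps are short dimension counts. That existence is exactly Lemma~\ref{lem:c12-2diag} with $r=k$, and its failure precisely at $(k,q)=(1,2)$ is what singles out that case as the stated exception.
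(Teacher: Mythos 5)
Your proof is correct, and its combinatorial heart is the same as the paper's: two subspaces $A$ and $B$ with $A\cap B=0$ cannot both meet an $m$-subspace $U$ in $k_2$-dimensional subspaces when $2k_2>m$, so a pair of bisections whose components are pairwise ``skew'' across the two bisections can have no common incident point. Where you differ is in how you manufacture that skew pair. The paper performs field reduction from $V(2,q^k)$ to $V(2k,q)$ to obtain a Desarguesian spread of $q^k+1$ mutually disjoint $k$-subspaces and picks four of them, with the exception $(k,q)=(1,2)$ arising because $q^k+1\ge 4$ forces $q^k\ge 3$; you instead apply Lemma~\ref{lem:c12-2diag} with $r=k$ to a fixed bisection $V=V_1\oplus V_2$ to get two disjoint maximal diagonal subspaces $W_1,W_2$, and the same exceptional case falls out as the unique failure of that lemma. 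Your route is arguably the more economical one here: it stays entirely within machinery already developed in the paper, produces only the four cross-intersections $V_i\cap W_j=0$ that the argument actually uses (rather than six pairwise-disjoint conditions), and makes the role of the $(k,q)=(1,2)$ exclusion transparent. The spread construction buys a little more --- it exhibits $q^k+1$ pairwise disjoint $k$-subspaces at once, which is a standard and reusable geometric object --- but none of that extra strength is needed for this lemma. One cosmetic point: your preliminary observation that $k_1<k_2$, and hence that the ``major part'' is unique, is not actually needed; it suffices that each incident bisection has \emph{at least} one component meeting $U$ in dimension $k_2$, which is immediate from the definition of incidence.
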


\begin{proof}
Consider the vector space $V(2,q^k)$ over the finite field of order $q^k$. We can identify the
vectors of $V(2,q^k)$ with the vectors of $V(2k,q)$, which has the effect of ``blowing up''
subspaces: an $e$-dimensional subspace of $V(2,q^k)$ becomes a $ke$-dimensional subspace of
$V(2k,q)$.  So if we take the $q^k+1$ one-dimensional subspaces of $V(2,q^k)$, this ``blowing up''
procedure produces a set $\mathcal{S}$ of mutually disjoint $k$ dimensional subspaces of
$V(2k,q)$. This is known in finite geometry as a \emph{regular} or \emph{Desarguesian spread} of the
ambient projective space. Now take $V_1$, $V_2$, $V_1'$ and $V_2'$ to be four distinct elements of
$\mathcal{S}$; note that $|\mathcal{S}|\ge 4$ precisely when $q^k\ge 3$, that is, $(k,q)\ne (1,2)$.
No $m$-subspace contains a $2k_2$-dimensional subspace since $k_2> m/2$, and so there is no $m$-subspace $U$
that meets both $V_2$ and $V_2'$ in $k_2$-dimensional subspaces. It follows that $\Proj{m,k}{(k_1,k_2)}(V)$ is
not concurrently complete.
\end{proof}

\begin{lemma}\label{lemma:c21-smallerm}
Let $m$, $m'$, and $k$ be positive integers such that $1\le m'\le m\le k$. If $\Proj{m,k}{(0,0)}(V)$
is concurrently complete, then $\Proj{m',k}{(0,0)}(V)$ is concurrently complete.
\end{lemma}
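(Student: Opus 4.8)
The key observation is that the two geometries $\Proj{m,k}{(0,0)}(V)$ and $\Proj{m',k}{(0,0)}(V)$ share \emph{exactly the same line set}, namely the set of all bisections $\{V_1,V_2\}$ of $V$ into two complementary $k$-subspaces; only the point sets differ ($m$-subspaces versus $m'$-subspaces). Moreover, for the parameter pair $(k_1,k_2)=(0,0)$, a point $U$ is incident with a bisection $\{V_1,V_2\}$ precisely when $U\cap V_1=U\cap V_2=\{0\}$, that is, when $U$ is disjoint from both halves of the bisection. The plan is to exploit the fact that this disjointness condition is inherited by subspaces.

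First I would fix an arbitrary pair of distinct lines, that is, two distinct bisections $\{V_1,V_2\}$ and $\{V_1',V_2'\}$ of $V$. Since $\Proj{m,k}{(0,0)}(V)$ is concurrently complete by hypothesis, there exists a point $U\in\Gr_m(V)$ incident with both of these lines. By the incidence rule for $(0,0)$, the subspace $U$ is then disjoint from each of the four $k$-subspaces $V_1,V_2,V_1',V_2'$.

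Next, since $1\le m'\le m$, I would choose any $m'$-dimensional subspace $U'\le U$. For each $X\in\{V_1,V_2,V_1',V_2'\}$ we have $U'\cap X\subseteq U\cap X=\{0\}$, so $U'$ is likewise disjoint from all four halves. Hence $U'$ is a point of $\Proj{m',k}{(0,0)}(V)$ incident with both lines $\{V_1,V_2\}$ and $\{V_1',V_2'\}$. As the pair of lines was arbitrary, every pair of distinct lines of $\Proj{m',k}{(0,0)}(V)$ is incident with a common point, which is precisely concurrent completeness.

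There is no substantive obstacle here: the argument is pure monotonicity in the point dimension, driven entirely by the fact that the incidence condition for $(k_1,k_2)=(0,0)$ is disjointness from each half of a bisection, a property that passes down to subspaces. The only points that require (routine) verification are that the two geometries have identical line sets, and that passing from $U$ to an $m'$-subspace $U'$ of $U$ preserves disjointness from each $V_i$ and $V_i'$; both are immediate.
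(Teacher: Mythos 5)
Your argument is correct and is essentially identical to the paper's own proof: given two bisections, concurrent completeness of $\Proj{m,k}{(0,0)}(V)$ yields an $m$-subspace $U$ disjoint from all four halves, and any $m'$-subspace of $U$ inherits that disjointness. Nothing further is needed.
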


\begin{proof}
Suppose we have two bisections $\{V_1,V_2\}$ and $\{W_1,W_2\}$ of $V$. Since 
$\Proj{m,k}{(0,0)}(V)$ is concurrently complete, we can find a subspace $U$ of dimension $m$
that meets each of $V_1$, $V_2$, $W_1$, and $W_2$ in a trivial subspace.
Take any $m'$-subspace $\bar{U}$ of $U$. Then $\bar{U}$ also has the same property that it
meets $V_1$, $V_2$, $W_1$, and $W_2$ trivially.
Therefore, $\Proj{m',k}{(0,0)}(V)$ is concurrently complete.
\end{proof}

\subsubsection{The case $k=1$ and the case $k_1=m$ }

\begin{lemma}\label{lemma:kequals1}
Suppose $k=1$. Then:
\begin{enumerate}
\item[(i)] $\Proj{1,1}{(0,0)}(V)$ is concurrently complete if and only if $q\ge 4$;   
\item[(ii)] $\Proj{1,1}{(0,1)}(V)$ is concurrently complete if and only if $q=2$.
\end{enumerate}
\end{lemma}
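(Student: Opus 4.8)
The plan is to exploit the fact that $k=1$ forces $\dim V=2$, so $V=V(2,q)$ and the only nontrivial proper subspaces are the $q+1$ one-dimensional subspaces, which are exactly the points of the projective line $\PG(1,q)$. Since any two distinct $1$-subspaces of $V(2,q)$ span $V$, each bisection $\{V_1,V_2\}$ is simply an unordered pair of distinct $1$-subspaces, so that both the points and the lines of the geometry are, up to interpretation, the $q+1$ points of $\PG(1,q)$. The whole argument then reduces to a finite counting exercise on this $(q+1)$-element set.

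First I would decode the two incidence relations. In case (i), where $(k_1,k_2)=(0,0)$, a point $U$ is incident with a line $\{V_1,V_2\}$ precisely when $U\cap V_1=U\cap V_2=0$; as $U,V_1,V_2$ are all $1$-dimensional, this is equivalent to $U\notin\{V_1,V_2\}$. In case (ii), where $(k_1,k_2)=(0,1)$, incidence holds precisely when one of $U\cap V_1,U\cap V_2$ is trivial and the other is $1$-dimensional, which is equivalent to $U\in\{V_1,V_2\}$. These two incidences are in a sense opposite, and each claim becomes an explicit statement about subsets of the point set.

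For (i), concurrent completeness asks that any two distinct bisections $\{V_1,V_2\}$ and $\{W_1,W_2\}$ have a common incident point, which by the decoding means a $1$-subspace lying outside $\{V_1,V_2,W_1,W_2\}$. Since this set has at most four elements while the total number of points is $q+1$, such a point always exists once $q+1\ge 5$, that is $q\ge 4$, giving sufficiency. For necessity I would treat $q\le 3$ directly: when $q=3$ there are four points, and I would take two disjoint bisections whose union is all four points, leaving no point outside; when $q=2$ there are only three points and three bisections, and any two of them together cover all three points, again leaving none outside. Hence $\Proj{1,1}{(0,0)}(V)$ is concurrently complete if and only if $q\ge 4$.

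For (ii), the decoding shows that two distinct bisections share a common incident point exactly when they share a $1$-subspace. When $q=2$ the three bisections on three points pairwise intersect (each omits a single point), so concurrent completeness holds; when $q\ge 3$ there are at least four points and I would exhibit two disjoint bisections, which share no point and hence no incident point, so concurrent completeness fails. This gives that $\Proj{1,1}{(0,1)}(V)$ is concurrently complete if and only if $q=2$. The only real care required is the faithful translation of the incidence relation in this degenerate $k=1$ setting and the explicit handling of the small fields $q\in\{2,3\}$; I do not expect any substantial obstacle beyond this bookkeeping.
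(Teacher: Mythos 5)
Your proposal is correct and follows essentially the same route as the paper: reduce everything to counting points of the projective line $\PG(1,q)$ and check the small fields $q\in\{2,3\}$ explicitly. The only cosmetic difference is in part (ii), where the paper cites Lemma~\ref{lemma:c21-main-noexist} (the spread argument) for the necessity of $q=2$, whereas you exhibit two disjoint bisections directly — which for $k=1$ amounts to the same thing.
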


\begin{proof}
(i) Suppose $k=1$.
Then $\Gr_k(V)$ is simply a projective line containing $q+1$ points. To be concurrently complete, 
for all distinct pairs of points $\{V_1,V_2\}$ and $\{V_3,V_4\}$, there must be a point disjoint from each $V_i$.
This is true if and only if $q\ge 4$.
\medskip

\noindent (ii) Suppose that $(m,k,q)=(1,1,2)$ and notice that the point set of $\Proj{1,1}{(0,1)}(V)$ is simply the
three points of the projective line $\PG(1,2)$.  In this case, it is trivially concurrently complete
since two pairs of points $\{V_1,V_2\}$ and $\{V_1',V_2'\}$ (i.e., bisections) must have an element in common.
By substituting $k=m=1$ into Lemma \ref{lemma:c21-main-noexist}, we see that $\Proj{1,1}{(0,1)}(V)$ is
concurrently complete if and only if $q=2$.
\end{proof}

Applying the previous two lemmas gives us decisive information about  $\Proj{m,k}{(0,m)}(V)$. 

\begin{corollary}\label{cor:c21-main-noexist}
Let $m$ and $k$ be positive integers such that $1\le m\le k$. Then $\Proj{m,k}{(0,m)}(V)$ is
concurrently complete if and only if $(m,k,q)=(1,1,2)$.
\end{corollary}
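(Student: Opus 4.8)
The plan is to reduce this corollary entirely to the two preceding lemmas, by observing that the incidence type $(k_1,k_2)=(0,m)$ falls squarely into the ``large $k_2$'' regime governed by Lemma~\ref{lemma:c21-main-noexist}. The crucial elementary observation is that here $k_2=m$, so since $m\ge1$ we automatically have $k_2=m>m/2$. Thus the hypothesis $k_2>m/2$ of Lemma~\ref{lemma:c21-main-noexist} is always satisfied, and the only remaining restriction carried by that lemma is $(k,q)\ne(1,2)$.

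For the forward implication I would argue by contraposition. Assuming $(k,q)\ne(1,2)$, Lemma~\ref{lemma:c21-main-noexist} gives that $\Proj{m,k}{(0,m)}(V)$ is not concurrently complete. Consequently, if $\Proj{m,k}{(0,m)}(V)$ is concurrently complete then $(k,q)=(1,2)$, forcing $k=1$ and $q=2$; and since $1\le m\le k=1$ we must then have $m=1$, that is, $(m,k,q)=(1,1,2)$. For the converse, when $(m,k,q)=(1,1,2)$ the geometry is precisely $\Proj{1,1}{(0,1)}(V)$, which by Lemma~\ref{lemma:kequals1}(ii) is concurrently complete exactly when $q=2$; as $q=2$ here, concurrent completeness holds. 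Together these two directions give the claimed equivalence.

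The argument is short, and I anticipate only one point needing care: Lemma~\ref{lemma:c21-main-noexist} is stated for \emph{positive} $k_1,k_2$, whereas we apply it with $k_1=0$. This is harmless, and I would dispatch it in a single sentence by recalling that the spread construction in the proof of that lemma never uses $k_1$ --- four pairwise-disjoint spread elements cannot all be met by a single $m$-subspace in a $k_2$-dimensional subspace of two of them once $2k_2>m$, irrespective of the value of $k_1$ --- and indeed the proof of Lemma~\ref{lemma:kequals1}(ii) already invokes Lemma~\ref{lemma:c21-main-noexist} in exactly this way with $k_1=0$. I expect this to be the only genuine obstacle, and once it is noted the corollary follows immediately.
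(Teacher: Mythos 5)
Your proof is correct and is essentially the paper's own argument: the corollary is obtained by combining Lemma~\ref{lemma:c21-main-noexist} (applicable because $k_2=m>m/2$, leaving only the case $(k,q)=(1,2)$, which forces $m=1$) with Lemma~\ref{lemma:kequals1}(ii). Your observation that the hypothesis ``$k_1,k_2$ positive'' in Lemma~\ref{lemma:c21-main-noexist} can be relaxed to allow $k_1=0$ is accurate and worth making explicit, since the spread argument there never uses $k_1$ and the paper itself already applies that lemma with $k_1=0$ in the proof of Lemma~\ref{lemma:kequals1}(ii).
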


\subsubsection{Using the `Restricted Movement Criterion'}\label{sec:c21-existence}

To study concurrent completeness of $\Proj{m,k}{J}(V)$, we use the Restricted Movement Criterion
(see \cite[Theorem 1.1(c)]{AFG92}): using Notation~\ref{notation:c12-0}, $G=QPQ$ if and only 
if the set $\Gamma:=U^{Q}$ has
\emph{restricted movement}. In other words,
\begin{align}\label{eq:RMC}
    \Proj{m,k}{J}(V) \text{ is concurrently complete  if and only if } \Gamma^{g}\cap \Gamma\neq \varnothing, \forall g\in G.
\end{align}
Our main result here is Proposition~\ref{prop:c21-main-exist} that shows that if $q^k>4$, then
$\Proj{m,k}{(0,0)}(V)$ is concurrently complete, except possibly when $m=k$, or $q=2$ and $m=k-1$.
Then for the remainder of this section, we will concentrate on the case $m=k$. We need further
notation:

\begin{notation}\label{notation:c12-F-func}
Let $r$, $s$ and $q$ be positive integers such that $r\le s$. Define the integer map $F(r,s,q)$ by
\begin{align*}
F(r,s,q)= \prod_{i=r}^{s}(1-q^{-i}).
\end{align*}
\end{notation}

\begin{remark}\label{rem:c21-diag}
Let $V$, $G$, $P$ and $Q$ be as in Notation~\ref{notation:c12-0}, and let $U$ be an $m$-subspace of
$V$ with $m\ge 2$. Note that if $|\Gamma|>|\Omega|/2$, then $\Gamma$ has restricted movement, and
hence by (\ref{eq:RMC}), $\Proj{m,k}{J}(V)$ is concurrently complete. Here, we give some equivalent
formulas for $|\Gamma|>|\Omega|/2$:
\begin{align*}
F(k-m+1,k,q)^{2}>\frac{F(2k-m+1,2k,q)}{2},
\end{align*}
or equivalently,
\begin{align}\label{eq:c21-H-ineq}
    H(a,k,q):=\prod_{i=a}^{k}\frac{(1-q^{-i})^{2}}{1-q^{-(k+i)}}>\frac{1}{2},
\end{align}
where $a=k-m+1$. Therefore, (\ref{eq:c21-H-ineq}) is a sufficient condition for obtaining concurrent 
completeness of $\Proj{m,k}{J}(V)$.
\end{remark}

In Lemma~\ref{lem:c21-bound} below, we use the \emph{Maclaurin series} for the \emph{Natural logarithm} function `$\ln$'.
\begin{align}\label{eq:c21-ln}
    \ln(1-x) =-\sum_{j=1}^{\infty} \frac{x^j}{j}, \hbox{ for } -1\le x<1.
\end{align}

\begin{lemma}\label{lem:c21-bound}
Let $k$ and $q$ be positive integers with $q\ge 2$. Then
\begin{enumerate}[(a)]
  \item for every positive integer $i$, we have
  \begin{align}\label{eq:c21-bound1}
    \frac{(1-q^{-i})^{2}}{1-q^{-(k+i)}}> 1-2q^{-i};
  \end{align}
  \item if $a$ is a positive integer such that $2\le a \le k$, then
  \begin{align*}
    H(a,k,q)>1-\frac{2q^{-a}}{1-q^{-1}}-\frac{2q^{-2a}}{(1-2q^{-a})(1-q^{-2})},
  \end{align*}
  where $H(a,k,q)$ is as in (\ref{eq:c21-H-ineq}).
\end{enumerate}
\end{lemma}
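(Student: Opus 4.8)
The plan is to prove (a) by a one-line manipulation and then bootstrap it to (b) through a logarithmic estimate. For (a), I would simply observe that the denominator $1-q^{-(k+i)}$ lies strictly between $0$ and $1$ (since $q\ge 2$ forces $0<q^{-(k+i)}<1$), so dividing the positive numerator $(1-q^{-i})^2$ by it strictly increases its value:
$$\frac{(1-q^{-i})^2}{1-q^{-(k+i)}} > (1-q^{-i})^2 = 1-2q^{-i}+q^{-2i} > 1-2q^{-i},$$
the final strict inequality holding because $q^{-2i}>0$. This settles (a) with no reference to the ranges of $a$ or $k$, using only that $1-q^{-i}>0$.

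For (b), the first step is to apply part (a) to each of the $k-a+1$ factors of $H(a,k,q)$, giving the lower bound $H(a,k,q) > \prod_{i=a}^{k}(1-2q^{-i})$. Since $a\ge 2$ and $q\ge 2$, every factor satisfies $2q^{-i}\le 2q^{-a}\le 2q^{-2}\le \tfrac12<1$, so each factor is positive and the Maclaurin series \eqref{eq:c21-ln} is valid at $x=2q^{-i}$. I would then take logarithms and estimate each summand by peeling off the linear term:
$$-\ln(1-2q^{-i}) = 2q^{-i} + \sum_{j=2}^{\infty}\frac{(2q^{-i})^{j}}{j} \le 2q^{-i} + \frac{1}{2}\sum_{j=2}^{\infty}(2q^{-i})^{j} = 2q^{-i} + \frac{2q^{-2i}}{1-2q^{-i}}.$$
The one slightly delicate move is replacing $1/j$ by $1/2$ for all $j\ge 2$: this is precisely what turns the leading quadratic contribution $4q^{-2i}$ into the $2q^{-2i}$ that appears in the stated bound, so pinning down this constant is the main point of the whole estimate.

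Next I would make the estimate uniform in $i$ and sum the resulting geometric series. Using $1-2q^{-i}\ge 1-2q^{-a}$ for $i\ge a$, and enlarging the finite sums over $a\le i\le k$ to the corresponding infinite geometric series, I obtain
$$-\sum_{i=a}^{k}\ln(1-2q^{-i}) < \frac{2q^{-a}}{1-q^{-1}} + \frac{2q^{-2a}}{(1-2q^{-a})(1-q^{-2})} =: R,$$
where positivity of $1-2q^{-a}$ is guaranteed by $a\ge 2,\ q\ge 2$. Exponentiating and invoking $e^{-R}>1-R$ (a consequence of $e^{x}\ge 1+x$) gives $\prod_{i=a}^{k}(1-2q^{-i}) > 1-R$, and chaining this with the first step yields $H(a,k,q) > 1-R$, which is exactly the claimed inequality. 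Thus the only ingredients beyond elementary algebra are the convergence and positivity conditions supplied by $a\ge 2$ and $q\ge 2$, the term-by-term logarithm bound, and the single exponential estimate at the end; I expect the constant-tracking in the tail bound to be the only place requiring genuine care.
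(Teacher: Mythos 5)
Your proof is correct and follows essentially the same route as the paper: part (b) is the identical chain of applying (a) factorwise, taking logarithms, bounding the Maclaurin tail by replacing $1/j$ with $1/2$ for $j\ge 2$, enlarging to infinite geometric series, and finishing with $e^{x}\ge 1+x$. The only (cosmetic) difference is in part (a), where you observe that dividing the positive quantity $(1-q^{-i})^{2}$ by $1-q^{-(k+i)}\in(0,1)$ strictly increases it, whereas the paper clears denominators and reduces to $q^{k}+q^{i}-2>0$; both are valid one-line verifications.
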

\begin{proof}
(a) Note that (\ref{eq:c21-bound1}) holds if and only if $q^{k}(q^{i}-1)^{2}-(q^{k+i}-1)(q^{i}-2)>
  0$, or equivalently, $q^{k}+q^{i}-2>0$. This is true since $\min\{q^{k},q^{i}\}\ge q\ge 2$.

(b) By part (a), we have that
\begin{align*}
    H(a,k,q)= \prod_{i=a}^{k}\frac{(1-q^{-i})^{2}}{1-q^{-(k+i)}}>\prod_{i=a}^{k}(1-2q^{-i}),
\end{align*}
and so $\ln(H(a,k,q))>\ln(\prod_{i=a}^{k}(1-2q^{-i}))=\sum_{i=a}^{k}\ln(1-2q^{-i})$. For $i\ge a\ge
2$, we have that $0<2q^{-i}<1$, and so by (\ref{eq:c21-ln}),
$\ln(H(a,k,q))>-\sum_{i=a}^{k}\sum_{j=1}^{\infty}\frac{(2q^{-i})^{j}}{j}$. Therefore,
\begin{align*}
    \ln(H(a,k,q))&>-\sum_{i=a}^{k}\sum_{j=1}^{\infty}\frac{(2q^{-i})^{j}}{j}\\
    &\ge -\left( \sum_{i=a}^{k}2q^{-i}+\sum_{i=a}^{k}\sum_{j=2}^{\infty}\frac{(2q^{-i})^{j}}{2} \right)\\
    &> -\left( \frac{2q^{-a}}{1-q^{-1}}+2\cdot\sum_{i=a}^{k} \frac{q^{-2i}}{1-2q^{-a}} \right)\\
    &\ge -\left( \frac{2q^{-a}}{1-q^{-1}}+\frac{2}{(1-2q^{-a})} \cdot\sum_{i=a}^{\infty} q^{-2i} \right)\\
   &\ge -\frac{2q^{-a}}{1-q^{-1}}-\frac{2q^{-2a}}{(1-2q^{-a})(1-q^{-2})}.
\end{align*}
Since $H(a,k,q)=\exp(\ln(H(a,k,q)))>1+\ln(H(a,k,q))$, we have that $H(a,k,q)> 1-
\frac{2q^{-a}}{1-q^{-1}}- \frac{2q^{-2a}}{(1-tq^{-a})(1-q^{-2})}$.
\end{proof}

\begin{proposition}\label{prop:c21-main-exist}
Let $m$ and $k$ be positive integers such that $1\le m\le k$ and $q^k>4$. Then $\Proj{m,k}{(0,0)}(V)$ is
concurrently complete unless one of the rows of Table~\ref{tbl:c21-excep-prop} holds.
\begin{table}[H]
\caption{Unresolved cases for concurrent completeness of  $\Proj{m,k}{(0,0)}(V)$ when $m\le k$.}\label{tbl:c21-excep-prop}
\begin{center}
\begin{tabular}{ll}
\hline\noalign{\smallskip}
$q$ & Conditions on $m$ and $k$ \\
\noalign{\smallskip}\hline\noalign{\smallskip}
 $2$  & $m=k-1\ge 2$ \\
      & $m=k\ge 3$ \\ \hline
   $3$& $m=k\ge 2$\\ \hline
   $4$& $m=k\ge 3$ \\
\noalign{\smallskip}\hline
\end{tabular}
\end{center}
\end{table}

\end{proposition}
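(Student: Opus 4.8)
The plan is to combine the ``majority criterion'' of Remark~\ref{rem:c21-diag} with the downward-closure Lemma~\ref{lemma:c21-smallerm}. Writing $a:=k-m+1$, Remark~\ref{rem:c21-diag} reduces concurrent completeness of $\Proj{m,k}{(0,0)}(V)$, when $m\ge2$, to the single inequality $H(a,k,q)>1/2$, with $H$ as in~\eqref{eq:c21-H-ineq}. Since Lemma~\ref{lemma:c21-smallerm} propagates concurrent completeness downward in $m$, and since the values of $m$ excluded by Table~\ref{tbl:c21-excep-prop} are always the largest admissible ones, it is enough to verify the inequality for $m=m^\ast$, the largest $m\le k$ not excluded. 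The case $m^\ast=1$ (which includes the forced situation $k=1$) is treated separately by a counting argument: given two bisections, a $1$-subspace meeting all four of their $k$-subspaces trivially exists because $\frac{q^{2k}-1}{q-1}>4\cdot\frac{q^{k}-1}{q-1}$ is equivalent to $(q^{k}-1)(q^{k}-3)>0$, which holds as $q^{k}>4$ (for $k=1$ one may instead quote Lemma~\ref{lemma:kequals1}(i)).

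I would then record $m^\ast$ and the corresponding $a$. Table~\ref{tbl:c21-excep-prop} excludes $m\in\{k-1,k\}$ when $q=2$, giving $m^\ast=k-2$ and $a=3$; it excludes $m=k$ when $q=3$, and when $q=4$ with $k\ge3$, giving $m^\ast=k-1$ and $a=2$; it excludes nothing when $q\ge5$, giving $m^\ast=k$ and $a=1$; and the remaining pair $(q,k)=(4,2)$ has $m^\ast=2$, $a=1$. The only instances of $m^\ast=1$ are $k=1$ and the pairs $(q,k)=(2,3),(3,2)$, all disposed of by the counting argument; in every other case $m^\ast\ge2$. Thus the proposition reduces to establishing $H(3,k,2)>1/2$, $H(2,k,3)>1/2$, $H(2,k,4)>1/2$, $H(1,k,q)>1/2$ for $q\ge5$, and $H(1,2,4)>1/2$.

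For these I would prove one monotonicity fact: for fixed $q\ge2$ and $a\ge1$, the map $k\mapsto H(a,k,q)$ is strictly decreasing, with limit $\big(\prod_{i=a}^{\infty}(1-q^{-i})\big)^{2}$ as $k\to\infty$. Monotonicity comes from forming $H(a,k+1,q)/H(a,k,q)$, telescoping the denominator, and showing this ratio is less than $1$; after elementary bounding of the factors this reduces to $2q^{-(k+1)}+q^{-k}<2$, which is immediate. Consequently $H(a,k,q)>\big(\prod_{i=a}^{\infty}(1-q^{-i})\big)^{2}$ for every finite $k$, and the estimate $\ln(1-x)\ge -x/(1-x)$ derived from~\eqref{eq:c21-ln} yields $\big(\prod_{i=a}^{\infty}(1-q^{-i})\big)^{2}\ge\exp\!\big(-\frac{2q^{-a}}{(1-q^{-a})(1-q^{-1})}\big)$, which exceeds $1/2$ whenever $\frac{q^{-a}}{(1-q^{-a})(1-q^{-1})}<\frac{\ln2}{2}$. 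A direct check confirms this for $(a,q)\in\{(3,2),(2,3),(2,4)\}$ and for $a=1$ with $q\ge5$, settling every case except $(q,k,m)=(4,2,2)$; for $q\in\{3,4\}$ and $a=2$ one may alternatively invoke the $k$-free bound of Lemma~\ref{lem:c21-bound}(b).

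The main obstacle is the lone borderline case $(q,k,m)=(4,2,2)$, where $a=1$. Here the limiting value $\big(\prod_{i=1}^{\infty}(1-4^{-i})\big)^{2}$ is itself \emph{below} $1/2$ (about $0.47$), so the monotonicity reduction gives nothing, and Lemma~\ref{lem:c21-bound}(b) is unavailable since it requires $a\ge2$. One must instead compute $H(1,2,4)=\frac{(1-4^{-1})^{2}}{1-4^{-3}}\cdot\frac{(1-4^{-2})^{2}}{1-4^{-4}}$ exactly and observe that it is (only just) greater than $1/2$. The delicacy is that this geometry lies barely on the concurrently complete side, saved solely by $k$ being as small as $2$; this same weakness of the clean bound is precisely why the sharper infinite-product limit is required for the other small-$a$ cases $q=2$, $a=3$ and $q\ge5$, $a=1$.
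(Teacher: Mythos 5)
Your proof is correct, but it reorganises the argument in two genuine ways relative to the paper. Both proofs rest on the same majority criterion $H(a,k,q)>\tfrac{1}{2}$ of Remark~\ref{rem:c21-diag} (with the $m=1$ case done by direct point-counting, which in the paper appears as $H(k,k,q)=1-\tfrac{2}{q^k+1}$), and both are forced to evaluate the borderline quantity $H(1,2,4)=\tfrac{60}{119}$ exactly. The paper, however, verifies the criterion for \emph{every} admissible $m$, organised by $a=k-m+1\in\{1\},\{2\},\{\ge 3\}$, using the $k$-free Maclaurin-series lower bound of Lemma~\ref{lem:c21-bound}(b); the exceptional rows of the table are precisely the pairs $(q,a)$ for which that bound (or the exact value $H(1,2,q)$) fails to clear $\tfrac{1}{2}$. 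You instead import Lemma~\ref{lemma:c21-smallerm} --- which the paper deploys only later, in the proof of Theorem~\ref{thm:mainC2C1C2} --- to reduce to the single largest non-excluded $m$ for each $(q,k)$, and you replace Lemma~\ref{lem:c21-bound} by the observation that $k\mapsto H(a,k,q)$ decreases to $\bigl(\prod_{i\ge a}(1-q^{-i})\bigr)^{2}$, which you bound below via $\ln(1-x)\ge -x/(1-x)$; I have checked your monotonicity reduction to $q^{-k}+2q^{-(k+1)}<2$ and your numerical verifications ($\tfrac{2}{7}$, $\tfrac{3}{16}$, $\tfrac{4}{45}$, $\tfrac{5}{16}$, all below $\tfrac{\ln 2}{2}$), and they are sound. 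This buys you fewer inequalities to check (one value of $a$ per $q$) and a somewhat sharper analytic estimate; the cost is the extra monotonicity computation and the reliance on the downward-closure lemma, which is harmless since that lemma is proved independently of this proposition. Your identification of the genuinely delicate case $(q,k,m)=(4,2,2)$, where the infinite-product limit dips below $\tfrac{1}{2}$ and only the exact value $\tfrac{60}{119}$ saves the statement, matches the paper's explicit computation of $H(1,2,q)$ and correctly explains why the table's $q=4$ row begins at $k=3$.
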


\begin{proof}
By Remark~\ref{rem:c21-diag}, $\Proj{m,k}{(0,0)}(V)$ is concurrently complete if
$H(a,k,q)>\frac{1}{2}$, where $H(a,k,q)$ is as in (\ref{eq:c21-H-ineq}). Suppose first that $m=1$.
Then $a:=k-m+1=k$, and so
\begin{equation*}
    H(a,k,q)=\frac{(1-q^{-k})^{2}}{(1-q^{-2k})}=1-\frac{2}{q^{k}+1}.
\end{equation*}
Since $q^k>3$, we have $H(a,k,q)>\frac{1}{2}$, and therefore,
$\Proj{m,k}{(0,0)}(V)$ is concurrently complete.

Suppose now that $2\le m\le k$. Let $a:=k-m+1$, and so by  Lemma~\ref{lem:c21-bound}, we have that
\begin{align}\label{eq:c21-bound3}
    H(a,k,q)>(1-2q^{-a})\cdot \left( 1- \frac{2q^{-(a+1)}}{1-q^{-1}}- \frac{2q^{-2(a+1)}}{(1-2q^{-(a+1)})(1-q^{-2})} \right).
\end{align}

Assume first $a=1$, or equivalently, $m=k$. If $m=k= 2$, then
\begin{align*}
    H(1,2,q)&=\frac{q(q-1)^2 (q+1)}{(q^2+1) (q^2+q+1)}
\end{align*}
It turns out that $H(1,2,q)>\frac{1}{2}$ if and only if $q\ge 4$. If $m=k\ge 3$, then for $q\ge 5$,
we have that $1-2q^{-1}\ge \frac{3}{5}$, $\frac{2q^{-2}}{1-q^{-1}}\le \frac{1}{10}$, and
$\frac{2q^{-4}}{(1-2q^{-3})(1-q^{-2})}\le \frac{1}{276}$. Thus by (\ref{eq:c21-bound3}), we have
that $H(1,k,q)>(\frac{3}{5})(1-\frac{1}{10}-\frac{1}{276})=\frac{1237}{2300}$, and hence
$H(1,k,q)>\frac{1}{2}$ unless $q=2,3,4$.

Assume now $a=2$, that is to say, $m=k-1$. Note that $k\ge 3$ as $m\ge 2$. If $q\ge 3$, then by
(\ref{eq:c21-bound3}), we have that
$H(2,k,q)>(\frac{7}{9})(1-\frac{1}{9}-\frac{1}{300})=\frac{5579}{8100}$. Hence
$H(2,k,q)>\frac{1}{2}$ unless $q=2$.

Assume finally $a\ge 3$. Then $m\le k-2$, and so $k\ge m+2\ge 4$. Using $q\ge2$, we have, by
(\ref{eq:c21-bound3}), that $H(a,k,q)>\frac{3}{4}\left( 1-\frac{1}{4}-\frac{1}{84}
\right)=\frac{31}{56}$, and hence $H(a,k,q)>\frac{1}{2}$.

\end{proof}

In the next section, we will resolve the cases in Table~\ref{tbl:c21-excep-prop} for which $m=k$.

\subsubsection{The geometries  $\Proj{k,k}{(0,0)}(V)$ }

\begin{lemma}\label{lem:c21-smallcase}
Let $q^k\le 4$. Then $\Proj{k,k}{(0,0)}(V)$ is concurrently complete if and only if $(q,k)=(4,1)$.
\end{lemma}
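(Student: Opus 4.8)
The plan is to first enumerate the finitely many parameter pairs with $q^k\le 4$ and then dispatch them. Since $q$ is a prime power with $q\ge 2$ and $k\ge 1$, the inequality $q^k\le 4$ forces $(q,k)\in\{(2,1),(3,1),(4,1),(2,2)\}$: for $k=1$ we need $q\le 4$, for $k=2$ we need $q=2$, and $k\ge3$ is impossible since then $q^k\ge 8$. The three cases with $k=1$ are immediate from Lemma~\ref{lemma:kequals1}(i), which asserts that $\Proj{1,1}{(0,0)}(V)$ is concurrently complete if and only if $q\ge 4$; thus $(4,1)$ is concurrently complete while $(2,1)$ and $(3,1)$ are not. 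So the entire content of the lemma reduces to showing that $\Proj{2,2}{(0,0)}(V)$ is \emph{not} concurrently complete when $q=2$, i.e.\ over $V=V(4,2)$.

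For this remaining case I would argue via \eqref{eq:RMC} (equivalently, directly) by exhibiting two bisections of $V$ with no common incident point. Fix $V_1=\la e_1,e_2\ra$ and $V_2=\la e_3,e_4\ra$. A point incident with the line $\{V_1,V_2\}$ is precisely a $2$-subspace $U$ with $U\cap V_1=U\cap V_2=0$, that is, a common complement; such a $U$ is the graph $U_A:=\{x+Ax:x\in V_1\}$ of an invertible linear map $A\colon V_1\to V_2$, so there are exactly $|\GL(2,2)|=6$ of them. The key observation is that, for $A,B\in\GL(2,2)$, one has $U_A\cap U_B=0$ if and only if $A-B$ is invertible, since $(x,Ax)=(y,By)$ forces $x=y$ and $(A-B)x=0$.

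I would then take the second line to be $\{W_1,W_2\}$ with $W_1=U_I=\la e_1+e_3,e_2+e_4\ra$ the graph of the identity. By the observation, $U_A$ is disjoint from $W_1$ exactly when $A-I=A+I$ is invertible over $\F$, i.e.\ when $A$ fixes no nonzero vector; in $\GL(2,2)\cong S_3$ these are precisely the two elements of order $3$. Hence at most two of the six candidate points, namely the two order-$3$ graphs, can still be incident with both lines. Choosing $W_2=\la e_1+e_3+e_4,\ e_2+e_3+e_4\ra$, one checks that $\{W_1,W_2\}$ is a genuine bisection (i.e.\ $W_1\oplus W_2=V$) and that $W_2$ meets each of the two order-$3$ graphs nontrivially. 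Consequently no $2$-subspace is disjoint from all four of $V_1,V_2,W_1,W_2$, so the lines $\{V_1,V_2\}$ and $\{W_1,W_2\}$ are not concurrent and $\Proj{2,2}{(0,0)}(V)$ fails to be concurrently complete.

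The only real obstacle is the $(2,2)$ case, since the $k=1$ cases come for free from Lemma~\ref{lemma:kequals1}. Within that case the crux is the clean reduction of the incident points to $\GL(2,2)$-graphs together with the disjointness criterion ``$A-B$ invertible'', which converts the geometric question into the elementary fact that in $S_3\cong\GL(2,2)$ only the two $3$-cycles fix no nonzero vector; after that, verifying that the explicit $W_2$ catches both survivors is a short finite computation. An essentially equivalent route would phrase everything through \eqref{eq:RMC} with $g$ mapping $\{V_1,V_2\}$ to $\{W_1,W_2\}$, but the direct exhibition of two non-concurrent lines seems cleanest.
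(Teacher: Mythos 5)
Your proof is correct, and at the top level it follows the same strategy as the paper: the three $k=1$ cases are delegated to Lemma~\ref{lemma:kequals1}(i), and the case $(q,k)=(2,2)$ is settled by exhibiting two explicit non-concurrent bisections of $V(4,2)$. Where you differ is in how non-concurrence is verified. The paper takes the bisections $\{\la e_1,e_2\ra, \la e_3,e_4\ra\}$ and $\{\la e_2+e_4,e_3\ra, \la e_1,e_2+e_3\ra\}$ and argues by a covering count: the four $2$-spaces cover all but five of the fifteen nonzero vectors of $V(4,2)$, and no $2$-space has all three of its nonzero vectors among those five. You instead parametrise the common complements of $V_1$ and $V_2$ as graphs $U_A$ with $A\in\GL(2,2)$, use the criterion that $U_A\cap U_B=0$ if and only if $A-B$ is invertible to cut the six candidates down to the two fixed-point-free (order $3$) elements once disjointness from $W_1=U_I$ is imposed, and then check that your $W_2$ meets both survivors. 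Your route is more structured and generalises better conceptually (the graph parametrisation of common complements is exactly the device underlying the paper's later Lemma~\ref{lem:fourskew} for $q=2$), at the cost of a slightly longer setup; the paper's count is shorter but entirely ad hoc. I verified the finite checks: $W_2=\la e_1+e_3+e_4,\, e_2+e_3+e_4\ra$ is indeed complementary to $W_1=\la e_1+e_3,\, e_2+e_4\ra$, and it meets each of the two order-$3$ graphs $\la e_1+e_4,\, e_2+e_3+e_4\ra$ and $\la e_1+e_3+e_4,\, e_2+e_3\ra$ nontrivially, so your argument is complete.
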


\begin{proof}
The case $k=1$ follows from Lemma \ref{lemma:kequals1}. So suppose $k=2$. Then $q=2$ since $q^k\le
4$. Consider $V$ as the row-space $\mathbb{F}_2^4$ and let $\{e_1,e_2,e_3,e_4\}$ be the canonical
basis. Let $V_1=\langle e_1, e_2\rangle$, $V_2=\langle e_3,e_4\rangle$, $V_3=\langle e_2+e_4,
e_3\rangle$, and $V_4=\langle e_1, e_2+e_3\rangle$. Note that $\{V_1,V_2\}$ and $\{V_3,V_4\}$ are
bisections of $V$. These $2$-spaces cover $11$-vectors of $V$. The remaining vectors are $e_1+e_4$,
$e_1+e_3$, $e_1+e_3+e_4$, $e_1+e_2+e_4$, and $e_1+e_2+e_3+e_4$. There is no $2$-space whose nonzero
vectors are contained within this set of five vectors, so it is impossible to find a $2$-space $U$
disjoint from the components of these two bisections. Therefore, $\Proj{k,k}{(0,0)}(V)$ is not
concurrently complete.
\end{proof}

Notice in the proof above that we did not seek to prove the impossibility of a simpler case, where the $V_i$
are pairwise disjoint. In fact, this strategy does not work for good reasons (see below).

\begin{lemma}\label{lem:fourskew}
Suppose $q^k\ge 4$.
Let $\pi_1$, $\pi_2$, $\pi_3$, $\pi_4$ be $k$-subspaces of $V(2k,q)$ that pairwise intersect trivially. Then
there exists a fifth $k$-subspace $\sigma$ meeting each $\pi_i$ trivially.
\end{lemma}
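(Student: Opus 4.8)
The plan is to coordinatise and convert the statement into a question about matrices. Since $\pi_1\cap\pi_2=0$ and both have dimension $k$, we have $V=\pi_1\oplus\pi_2$; identifying $\pi_1\cong\pi_2\cong\F_q^k$, every $k$-subspace meeting $\pi_2$ trivially is the graph $\sigma_\phi:=\{v+\phi(v):v\in\pi_1\}$ of a unique $\phi\in M_k(\F_q)$. A direct check gives $\sigma_\phi\cap\pi_1=0$ exactly when $\phi\in\GL(k,q)$, and $\sigma_\phi\cap\sigma_\psi=0$ exactly when $\phi-\psi\in\GL(k,q)$. As $\pi_3,\pi_4$ meet $\pi_2$ trivially, I would write $\pi_3=\sigma_A$ and $\pi_4=\sigma_B$; pairwise disjointness of $\pi_1,\dots,\pi_4$ then says precisely that $A,B,A-B\in\GL(k,q)$. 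Since any fifth subspace meeting $\pi_2$ trivially is some $\sigma_\phi$, the lemma reduces to producing $\phi\in M_k(\F_q)$ with $\phi$, $\phi-A$ and $\phi-B$ all invertible. Left-multiplying by $A^{-1}$ preserves the three conditions, so I may further assume $A=I$; the task becomes avoiding the singular locus together with its translates by $I$ and by $B$, where now $B,B-I\in\GL(k,q)$.

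The main idea is to look for $\phi$ inside the commutative subalgebra $\F_q[B]$. Writing the minimal polynomial of $B$ as $\prod_i p_i^{e_i}$, with $p_i$ irreducible of degree $d_i$ and root $\alpha_i\in\F_{q^{d_i}}$, the Chinese Remainder Theorem gives $\F_q[B]\cong\prod_i\F_q[x]/(p_i^{e_i})$, and $g(B)$ is a unit iff its image is nonzero in each residue field $\F_{q^{d_i}}$. Hence $g(B)$, $g(B)-I$ and $g(B)-B$ are simultaneously invertible iff, for every $i$, the value $g(\alpha_i)$ avoids the three elements $0,1,\alpha_i$; and by CRT these values may be prescribed independently. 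Because $B$ and $B-I$ are invertible, no $\alpha_i$ equals $0$ or $1$, so the forbidden set has at most three elements and a suitable $g$ exists whenever every residue field has more than three elements. I would emphasise that a constructive argument is essential here rather than a counting one: a crude union bound over the three singular translates succeeds only when $\prod_{i=1}^k(1-q^{-i})>\tfrac23$, i.e.\ for $q\ge 4$, and in fact the conclusion is sharp (for $(q,k)=(2,2)$ there is a \emph{unique} valid $\phi$), so for small $q$ no purely numerical estimate can work.

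The above settles everything except when some $d_i=1$ over a field of size at most $3$. A degree-one factor is an eigenvalue of $B$ in $\F_q$, necessarily different from $0$ and $1$; over $\F_2$ there are none, and the sole genuine gap is $q=3$ with $B$ having the eigenvalue $2$, where $\{0,1,2\}=\F_3$ fills the residue field and leaves no admissible value. This residual family is the main obstacle. I would dispose of it using the generalised-eigenspace decomposition $V=V_2\oplus V'$ (both $B$-invariant), solving the matrix problem on $V'$ by the construction above, since every residue field occurring on $V'$ is larger than $\F_3$, and then adjoining the eigenvalue-$2$ directions one at a time: in a block decomposition with both off-diagonal blocks nonzero, each of the three invertibility requirements becomes a Schur-complement scalar condition, and the off-diagonal freedom can be used to force two of these scalars to coincide, after which a value of the remaining scalar in $\F_3$ avoiding the (at most two) forbidden ones exists. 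The only configuration not reached this way is $V'=0$, that is $B-2I$ nilpotent, which I would finish by an explicit choice of $\phi$ (for example one with irreducible characteristic polynomial when $B=2I$). Finally, the hypothesis $q^k\ge 4$ is exactly what the argument needs: the cases it excludes are $k=1$ with $q\le 3$, where $\PG(1,q)$ has fewer than five points and genuinely no fifth subspace exists.
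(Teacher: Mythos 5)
Your reduction and main argument are correct, and they take a genuinely different route from the paper. The paper splits into three cases: for $k=1$ it counts points on a projective line; for $k\ge 2$ and $q>2$ it argues by contradiction that four pairwise disjoint $k$-spaces cannot be a maximal partial spread, invoking lower bounds of Heim and Bruen on blocking sets with respect to $k$-spaces; only for $q=2$ does it pass to matrices, normalising $\pi_3$ to $[\,I\mid I\,]$ and $\pi_4$ to $[\,I\mid A\,]$ and exhibiting $\sigma=[\,I\mid A^{-1}\,]$. Your version is uniform and self-contained: the translation to ``find $\phi$ with $\phi$, $\phi-I$, $\phi-B$ invertible, given $B$, $B-I$ invertible'' is exactly right, and searching inside $\F_q[B]$ --- where invertibility of $g(B)$, $g(B)-I$, $g(B)-B$ is controlled by the values $g(\alpha_i)$ at one root of each irreducible factor of the minimal polynomial, prescribable independently by CRT --- disposes of every case except $q=3$ with $2$ an eigenvalue of $B$. (Pleasantly, the paper's $q=2$ solution is an instance of your scheme: $\phi=B^{-1}$ corresponds to $g(\alpha)=\alpha^{-1}$, and $\alpha^{-1}\notin\{0,1,\alpha\}$ for $\alpha\notin\F_2$ in characteristic $2$.) What you gain is the elimination of the external blocking-set machinery; what it costs is the $q=3$ tail, which the paper's geometric argument absorbs without comment.

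That tail is the one place where you owe details, and part of it is genuinely not proved as written. The Schur-complement induction does work: along a $B$-invariant flag with one-dimensional steps on which $B$ acts as $2$, writing $\phi=\left(\begin{smallmatrix}\phi_W&b\\ c&d\end{smallmatrix}\right)$ the three determinants factor as $\det(\phi_W)\,(d-c\phi_W^{-1}b)$ and so on, and since $(\phi_W-I)^{-1}-\phi_W^{-1}=(\phi_W-I)^{-1}\phi_W^{-1}$ is invertible you can choose $b,c$ with $c\bigl[(\phi_W-I)^{-1}-\phi_W^{-1}\bigr]b=-1$, making the first two Schur scalars equal and leaving at most two forbidden values for $d\in\F_3$. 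But the induction needs a nontrivial base, and when $V'=0$ (i.e.\ $B-2I$ nilpotent) you supply one only for $B=2I$; for $B-2I$ nilpotent and nonzero the ``explicit choice of $\phi$'' is promised but not given. To close this, observe that the generalised eigenspace always contains a $2$-dimensional $B$-invariant subspace on which $B$ is either $2I$ or the Jordan block $\left(\begin{smallmatrix}2&1\\0&2\end{smallmatrix}\right)$; in the latter case $\phi=\left(\begin{smallmatrix}0&1\\2&0\end{smallmatrix}\right)$ works over $\F_3$ (the three determinants are $1$, $2$, $1$), and this seeds the induction (note $k\ge 2$ here since $q^k\ge 4$). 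With that one addition your proof is complete.
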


\begin{proof}
If $k=1$, then clearly the result follows since the projective line arising from $V(2,q)$ has at
least five elements. So suppose $k\ge 2$, and assume by way of contradiction, that there is no
$k$-subspace $\sigma$ meeting each $\pi_i$ trivially. Then $\{\pi_1,\pi_2,\pi_3,\pi_4\}$ is what is
known in finite projective geometry as a \emph{maximal partial spread}\footnote{A \emph{partial
spread} is a set of mutually disjoint $k$-dimensional subspaces.}, and so the underlying set of
points ($1$-dimensional subspaces) forms a \emph{blocking set with respect to $k$-spaces} (we are
using algebraic dimensions here). By a result of Heim \cite{Heim:1996fk}, if $B$ is a nontrivial
blocking set with respect to $k$-spaces, and if $q>2$, then
\[
|B|\ge (q^{k+1} - 1)/(q - 1) +q^{k-1} r_2(q)      
\]
where $q+1+r_2(q)$ is the size of the smallest nontrivial blocking set of $\PG(2,q)$.
Since $r_2(q)\ge \sqrt{q}$ (by a result of Bruen \cite{Bruen:1970fk}), we have
(for $q>2$)
\[
4\cdot \frac{q^k-1}{q-1} \ge \frac{q^{k+1} - 1}{q - 1} +q^{k-1}\sqrt{q},
\]
which implies that $4q^k\ge q^{k+1}+q^{k-1}\sqrt{q}(q-1)$ and hence $4q-q^2-\sqrt{q}(q-1)\ge 0$;
which is a contradiction for $q>2$. So since $\{\pi_1,\pi_2,\pi_3,\pi_4\}$ is a partial spread, but
is not maximal, we can find a larger partial spread containing it. Therefore, there exists a
$k$-subspace $\sigma$ meeting each $\pi_i$ trivially.

Now suppose that $q=2$. Without loss of generality, we can assume that $\pi_1$, $\pi_2$, $\pi_3$,
$\pi_4$ are rowspaces for the following $k\times 2k$ matrices:
\begin{align*}
\pi_1:&\begin{bmatrix} I&O\end{bmatrix},&\pi_2:&\begin{bmatrix} O&I\end{bmatrix},\\
\pi_3:&\begin{bmatrix} I&I\end{bmatrix},&\pi_4:&\begin{bmatrix} I&A\end{bmatrix},
\end{align*}
where $I$ and $O$ are the $k\times k$ identity and zero matrices, respectively, and $A$ is some
matrix that is invertible (since $\pi_4$ is disjoint from $\pi_1$) and $A+I$ is invertible (as
$\pi_4$ is disjoint from $\pi_3$). Let $\sigma$ be the row space of $\begin{bmatrix} I&A^{-1}
\end{bmatrix}$.
Now $A^{-1}$ also has the property that $A^{-1}+I$ is invertible since $A^{-1}+I=A^{-1}(I+A)$.
The same argument shows that $\sigma$ is disjoint from $\pi_1$, $\pi_2$, and $\pi_3$. 
Since
\[
A^{-1}(A+I)^2=A^{-1}(A^2+2A+I)=A^{-1}(A^2+I)=A+A^{-1},
\]
we see that $A+A^{-1}$ is invertible, and hence, $\sigma$ is disjoint from $\pi_4$.
\end{proof}

\begin{lemma}\label{lem:fournotskew}
Let $k\ge 2$ and $\pi_1$, $\pi_2$, $\pi_1'$, $\pi_2'$ be $k$-subspaces of $V(2k,q)$ such that
\begin{enumerate}
\item[(i)] $\pi_2\cap \pi_1'\ne\{0\}$,
\item[(ii)] $\pi_1\cap\pi_2=\{0\}$,
\item[(iii)] $\pi_1'\cap\pi_2'=\{0\}$.
\end{enumerate}
If $\Proj{k-1,k-1}{(0,0)}(V(2k-2,q))$ is concurrently complete, then there is a $k$-space $\sigma$ 
disjoint from each $\pi_i$, $\pi_i'$.
\end{lemma}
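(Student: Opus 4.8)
The plan is to reduce the problem in $V=V(2k,q)$ to an instance of concurrent completeness in $V(2k-2,q)$, using the hypothesis $\pi_2\cap\pi_1'\ne\{0\}$ to peel off a carefully chosen $2$-dimensional subspace $W$. First I would fix a nonzero vector $v\in\pi_2\cap\pi_1'$; by (ii) and (iii) we have $v\notin\pi_1$ and $v\notin\pi_2'$. The aim is to find a $2$-space $W$ with $v\in W$ meeting \emph{each} of $\pi_1,\pi_2,\pi_1',\pi_2'$ in a $1$-space. Since $\pi_1\oplus\langle v\rangle$ and $\pi_2'\oplus\langle v\rangle$ are both $(k+1)$-dimensional, their intersection $S$ has dimension at least $2$ and contains $v$; choosing a $2$-space $W\subseteq S$ through $v$ whose second generator avoids $\pi_2\cup\pi_1'$ forces $\dim(W\cap\pi_1)=\dim(W\cap\pi_2')=1$ (because $W\subseteq\pi_1\oplus\langle v\rangle$ but $W\not\subseteq\pi_1$, and similarly for $\pi_2'$), while $W\cap\pi_2=W\cap\pi_1'=\langle v\rangle$.

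Next I would choose a complement $Y$ of $W$ in $V$ and let $\rho\colon V\to Y$ be the projection with kernel $W$. Each image $\rho(\pi_i)$ is then $(k-1)$-dimensional. Writing $W=(W\cap\pi_1)\oplus(W\cap\pi_2)$, a short argument shows $\rho(\pi_1)\cap\rho(\pi_2)=\{0\}$ (if $x\in\pi_1$, $y\in\pi_2$ with $\rho(x)=\rho(y)$, then $x-y\in W$ splits across $\pi_1\oplus\pi_2$, forcing $x,y\in W$), so $\{\rho(\pi_1),\rho(\pi_2)\}$ is a bisection of $Y\cong V(2k-2,q)$; likewise $\{\rho(\pi_1'),\rho(\pi_2')\}$ is a bisection. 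By the hypothesis that $\Proj{k-1,k-1}{(0,0)}(V(2k-2,q))$ is concurrently complete, there exists a $(k-1)$-space $\sigma_0\subseteq Y$ disjoint from all four projections.

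It remains to lift $\sigma_0$. Put $\tilde\sigma:=\rho^{-1}(\sigma_0)=\sigma_0\oplus W$, of dimension $k+1$. For each $i$ one has $\tilde\sigma\cap\pi_i=W\cap\pi_i$, since $\rho(\tilde\sigma\cap\pi_i)\subseteq\sigma_0\cap\rho(\pi_i)=\{0\}$. Thus inside $\tilde\sigma$ the four subspaces meet it only in the at most three distinct ``bad'' points $\langle v\rangle,\,W\cap\pi_1,\,W\cap\pi_2'$ of the projective line on $W$. If I can pick $w_0\in W$ spanning a point distinct from all of these, then $\sigma:=\sigma_0\oplus\langle w_0\rangle$ is a $k$-space with $\sigma\cap\pi_i\subseteq\langle w_0\rangle\cap(W\cap\pi_i)=\{0\}$ for every $i$, as required.

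The main obstacle is producing this good point $w_0$. The line on $W$ has $q+1$ points and at most three are bad, so for $q\ge 3$ the point $w_0$ exists immediately. For $q=2$ there are only three points, so I would need at most two bad points; as $\langle v\rangle$ differs from both $W\cap\pi_1$ and $W\cap\pi_2'$, this forces $W\cap\pi_1=W\cap\pi_2'$, which I can arrange by taking $W=\langle v,u\rangle$ for a nonzero $u\in\pi_1\cap\pi_2'$ — \emph{provided} $\pi_1\cap\pi_2'\ne\{0\}$. The genuinely hard residual case is therefore $q=2$ with $\pi_1\cap\pi_2'=\{0\}$, i.e.\ $V=\pi_1\oplus\pi_2'$, where every admissible $W$ yields three distinct bad points and the lifting above cannot succeed. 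For this case I would abandon the projection argument and work directly with the decomposition $V=\pi_1\oplus\pi_2'$: expressing $\pi_2$, $\pi_1'$ and the sought $\sigma$ as graphs of linear maps, the problem becomes the existence of an invertible $\phi\colon\pi_1\to\pi_2'$ with $\phi-g$ and $\phi f-\mathrm{id}$ also invertible, where $f,g$ cut out $\pi_2,\pi_1'$. I expect this to need a counting estimate over $\F$ with $q=2$ (or a small separate induction), and this is where I anticipate the real difficulty to lie.
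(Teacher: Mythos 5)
Your reduction to $V(2k-2,q)$ is sound as far as it goes: the $2$-space $W=\langle v,u\rangle$ with $u\in S\setminus\langle v\rangle$ always exists (indeed $S\cap\pi_2=S\cap\pi_1'=\langle v\rangle$, so any $u\in S\setminus\langle v\rangle$ works), the four projections do form two bisections of $Y$, and the lift $\sigma_0\oplus\langle w_0\rangle$ is disjoint from all four spaces whenever a point of $\PG(W)$ avoids the three bad points. This settles $q\ge 3$, and also $q=2$ when $\pi_1\cap\pi_2'\ne\{0\}$. But the remaining case --- $q=2$ and $V=\pi_1\oplus\pi_2'$ --- is a genuine gap, not a loose end: as you yourself observe, \emph{every} hyperplane of $\sigma_0\oplus W$ meets $W$ in at least a point and all three points of $\PG(W)$ are bad, so no repair of the lifting is possible, and your fallback is only a plan. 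Reformulating via graphs of linear maps turns the problem into finding an invertible $\phi$ with $\phi-g$ and $\mathrm{id}-f\phi$ invertible over $\F_2$, and the ``counting estimate'' you hope for is precisely the kind of bound that fails at $q=2$ (there are too few invertible matrices relative to the conditions imposed); moreover that route discards the inductive hypothesis entirely. This is not a corner case one can wave at: in the paper's own proof the entire difficulty of the lemma is concentrated at $q=2$, where a naive count of the $q^k$ candidate $k$-spaces through a lifted $(k-1)$-space $\tau$ only gives $2q^{k-1}<q^k$ for $q>2$, and the $q=2$ case is disposed of by a separate, delicate argument (passing to the quotient $V/\tau\cong\mathsf{AG}(k+1,2)$ and deriving a contradiction from two affine hyperplanes being forced to be parallel while sharing the zero vector).

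For comparison, the paper reduces to dimension $2k-2$ slightly differently --- it fixes a hyperplane $\Sigma\supseteq\pi_2+\pi_1'$ and a line $\alpha\le\pi_2\cap\pi_1'$, applies the hypothesis in $\Sigma/\alpha$ to get a $(k-1)$-space $\tau\le\Sigma$ avoiding $\pi_2,\pi_1',\pi_1\cap\Sigma,\pi_2'\cap\Sigma$, and then lifts by choosing one of the $q^k$ $k$-spaces through $\tau$ not contained in $\Sigma$ --- which gives it more room at the lifting stage than your one-parameter family $\sigma_0\oplus\langle w_0\rangle$ does. If you want to complete your proof along your own lines, you must either supply the $\F_2$ matrix argument in full or replace the final lift by something with more freedom; as it stands the hardest subcase of the lemma is unproved.
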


\begin{proof}
Since $\pi_2\cap \pi_1'$ is nontrivial, $\langle \pi_2,\pi_1'\rangle$ is contained in a hyperplane
$\Sigma$. We know from (ii) and (iii) that $\pi_1$ and $\pi_2'$ are not contained in $\Sigma$ and so
they must meet $\Sigma$ in $(k-1)$-subspaces.

We will show now that there exists a $(k-1)$-subspace $\tau$ of $\Sigma$ disjoint from $\pi_2$,
$\pi_1'$, $\pi_1\cap \Sigma$, and $\pi_2'\cap\Sigma$. By (i), there exists a $1$-dimensional
subspace $\alpha$ of $\pi_2\cap \pi_1'$. Consider the quotient space $\Sigma/\alpha$ of dimension
$2k-2$. Recall that the quotient map is $U\mapsto \langle U,\alpha\rangle/\alpha$ and so the images
of $\{\pi_1\cap\Sigma,\pi_2\}$ and $\{\pi_1',\pi_2'\cap\Sigma\}$ yield a pair of bisections of
$\Sigma/\alpha$. Since $\Proj{k-1,k-1}{(0,0)}(V(2k-2,q))$ is concurrently complete, there exists a
$(k-1)$-subspace $\bar{\tau}$ of $\Sigma/\alpha$ disjoint from the images of $\pi_1\cap\Sigma$,
$\pi_2$, $\pi_1'$, and $\pi_2'\cap\Sigma$. The preimage of $\bar{\tau}$ is a $k$-subspace on
$\alpha$ that is disjoint from $\pi_1\cap\Sigma$ and $\pi_2'\cap\Sigma$, and meeting $\pi_1'$ and
$\pi_2$ precisely in the subspace $\alpha$. Hence we can take a $(k-1)$-subspace $\tau$ of this
preimage disjoint from $\alpha$ to obtain a $(k-1)$-subspace of $\Sigma$ disjoint from $\pi_2$,
$\pi_1'$, $\pi_1\cap \Sigma$, and $\pi_2'\cap\Sigma$.

Now there are $q^{2k}-q^{2k-1}=q^{2k-1}(q-1)$ vectors not in $\Sigma$, and each $k$-space containing
$\tau$ and not contained in $\Sigma$ contains exactly $q^{k-1}(q-1)$ such vectors. Thus there are
$q^k$ such subspaces and we denote them $s_1,\ldots,s_{q^k}$. Let $T = (\pi_1\cup \pi_2')\setminus
\Sigma$ and note that $|T|\le 2q^{k-1}(q-1)$. If $s_i\cap T$ is non-empty, say contains a vector
$x$, then it contains each non-zero scalar multiple of $x$. Hence $|s_i\cap T|\ge q-1$. Since each
of the $s_i$ contains $\tau$, the sets $s_i\cap T$ are pairwise disjoint, and hence the number of
$i$ with $s_i\cap T$ non-empty is at most $T\le 2q^{k-1}$. If $q>2$ then $2q^{k-1}<q^k$, and hence
there exists $i$ with $s_i\cap T$ empty. Therefore, for $q>2$, there exists a $k$-space that is
disjoint from each of the four subspaces $\pi_1$, $\pi_2$, $\pi_1'$, $\pi_2'$.

Suppose now that $q=2$. The argument above yields the required subspace unless each of the $s_i\cap
T$ is non-empty, so we may assume that this is the case. Thus, in particular, $q^k\le |T|\le
2q^{k-1}$, that is, $|T|=2^k$, which implies that $\pi_1\setminus \Sigma$ and $\pi_2'\setminus
\Sigma$ are disjoint. Moreover, from the previous paragraph we have $\sum_{i=1}^{2^k}|s_i\cap T|\le
2^k$, and hence for each $i$, $|s_i\cap T|=1$ and the singleton $s_i\cap T$ lies in either $\pi_1$
or $\pi_2'$. If $s_i\cap \pi_1\subset \pi_1$ then $\langle s_i,\pi_1\rangle = \langle \tau, \pi_1
\rangle$ is a hyperplane of $V$. Similarly if $s_i\cap \pi_2'\subset \pi_2'$ then $\langle
s_i,\pi_2'\rangle = \langle \tau, \pi_2' \rangle$ is a hyperplane of $V$, and exactly one occurs for
each $i$. Consider the quotient vector space $\ol{V}$ obtained by factoring out $\tau$. Now
$\dim(\ol{V})=k+1$, the image $\ol{\Sigma}$ of $\Sigma$ is a hyperplane, and consequently
$|\ol{V}\setminus\ol{\Sigma}|=2^k$. Moreover each of the $k$-spaces $s_i$ maps to a 1-space
$\ol{s_i}$ spanned by the image $\ol{s_i\cap T}$ of $s_i\cap T$, and these are $2^k$ pairwise
distinct 1-spaces. It follows that $\ol{V}\setminus\ol{\Sigma}$ is equal to the set $\{\ol{s_i\cap
T} \mid 1\le i\le 2^k\}$, and hence that every element of $\ol{V}\setminus\ol{\Sigma}$ is
contained in either the image $\ol{\pi_1}$ (of $\pi_1$) or the image $\ol{\pi_2'}$ (of $\pi_2'$).
The space $\ol{V}$ has the structure of an affine space $\mathsf{AG}(k+1,2)$, and each of
$\ol{\Sigma}, \ol{V}\setminus\ol{\Sigma}$, $\ol{\pi_1}$ and $\ol{\pi_2'}$ is an affine hyperplane.
Thus $\ol{\pi_1}\cap \ol{\Sigma}$, $\ol{\pi_2'}\cap \ol{\Sigma}$, $\ol{\pi_1}\setminus \ol{\Sigma}$
and $\ol{\pi_2'}\setminus \ol{\Sigma}$ are all affine $(k-1)$-subspaces, and $\ol{\pi_1}\cap
\ol{\Sigma}$, $\ol{\pi_2'}\cap \ol{\Sigma}$ is parallel to $\ol{\pi_1}\setminus \ol{\Sigma}$,
$\ol{\pi_2'}\setminus \ol{\Sigma}$, respectively. In addition we have shown that
$\ol{\pi_1}\setminus \ol{\Sigma}$ and $\ol{\pi_2'}\setminus \ol{\Sigma}$ are parallel to each other.
It follows that $\ol{\pi_1}\cap \ol{\Sigma}$ and $\ol{\pi_2'}\cap \ol{\Sigma}$ are parallel, and
this is a contradiction, since these both contain the zero vector of $\ol{V}$. Thus this final case
does not arise, and the proof is complete.
\end{proof}

We put Lemma \ref{lem:fournotskew} and Lemma \ref{lem:fourskew} together:

\begin{lemma}\label{lem:C2C1C2induction}
Let $k\ge 2$. If $\Proj{k-1,k-1}{(0,0)}(V(2k-2,q))$ is concurrently complete, then
$\Proj{k,k}{(0,0)}(V(2k,q))$ is concurrently complete.
\end{lemma}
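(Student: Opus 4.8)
The plan is to unwind what concurrent completeness means for the $(0,0)$ geometry and then feed two prepared bisections into the two preceding lemmas. Recall that in $\Proj{k,k}{(0,0)}(V)$ a point is a $k$-subspace $U$ and it is incident with a line $\{V_1,V_2\}$ precisely when $U\cap V_1=U\cap V_2=\{0\}$; hence, by the equivalence~\eqref{eq:RMC} (or directly by Lemma~\ref{prop:geom-flag-trans}), concurrent completeness of $\Proj{k,k}{(0,0)}(V(2k,q))$ amounts to the statement that for any two bisections $\{\pi_1,\pi_2\}$ and $\{\pi_1',\pi_2'\}$ of $V=V(2k,q)$ there is a $k$-subspace $\sigma$ disjoint from all four of $\pi_1,\pi_2,\pi_1',\pi_2'$. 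So I would fix two arbitrary bisections and split into two cases according to whether these four $k$-subspaces are pairwise skew.

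First I would treat the case in which $\pi_1,\pi_2,\pi_1',\pi_2'$ pairwise intersect trivially. Here I would simply note that $k\ge 2$ forces $q^k\ge 2^2=4$, so the hypothesis of Lemma~\ref{lem:fourskew} is met, and that lemma supplies the desired $\sigma$ directly. Second, I would handle the case in which some two of the four subspaces meet nontrivially. Since $\{\pi_1,\pi_2\}$ and $\{\pi_1',\pi_2'\}$ are bisections, we have $\pi_1\cap\pi_2=\{0\}$ and $\pi_1'\cap\pi_2'=\{0\}$, so any nontrivial intersection must occur between a member of the first bisection and a member of the second. After relabelling within each bisection (swapping $\pi_1\leftrightarrow\pi_2$ and/or $\pi_1'\leftrightarrow\pi_2'$ if necessary), I may assume $\pi_2\cap\pi_1'\ne\{0\}$. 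Then conditions (i)--(iii) of Lemma~\ref{lem:fournotskew} all hold, and since $\Proj{k-1,k-1}{(0,0)}(V(2k-2,q))$ is concurrently complete by hypothesis, that lemma produces the required $\sigma$.

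These two cases are exhaustive, so in either situation I obtain a $k$-subspace disjoint from all four subspaces, which establishes concurrent completeness. The main obstacle is not computational, since the genuine work has already been packaged into Lemmas~\ref{lem:fourskew} and~\ref{lem:fournotskew}; the care needed lies entirely in the bookkeeping of the case split. Specifically, I must verify that the relabelling legitimately reduces an arbitrary nontrivial intersection to the precise configuration $\pi_2\cap\pi_1'\ne\{0\}$ required by Lemma~\ref{lem:fournotskew}, using the fact that within-bisection pairs are always disjoint, and I must confirm that the constraint $k\ge 2$ indeed yields $q^k\ge 4$ so that Lemma~\ref{lem:fourskew} is applicable in the pairwise-skew case.
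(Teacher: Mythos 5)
Your argument is correct and is exactly the combination the paper intends: the paper introduces the lemma with ``We put Lemma~\ref{lem:fournotskew} and Lemma~\ref{lem:fourskew} together'' and leaves precisely your case split (pairwise skew versus a cross-bisection intersection, after relabelling within each bisection) as the proof. Your two checks --- that $k\ge 2$ gives $q^k\ge 4$ for Lemma~\ref{lem:fourskew}, and that any nontrivial intersection can be normalised to $\pi_2\cap\pi_1'\ne\{0\}$ because within-bisection pairs are disjoint --- are the right ones and both go through.
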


\begin{proposition}\label{propn:main_mk}
The geometry $\Proj{k,k}{(0,0)}(V(2k,q))$ is concurrently complete if and only if $(q,k)\ne(2,1), (3,1), (2,2)$.
\end{proposition}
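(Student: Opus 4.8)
The statement has a convenient shape: all three exceptional pairs satisfy $q^k\le 4$, so the ``only if'' direction is immediate from Lemma~\ref{lem:c21-smallcase}, which records that in the regime $q^k\le 4$ the geometry $\Proj{k,k}{(0,0)}(V)$ is concurrently complete precisely for $(q,k)=(4,1)$; hence $(2,1)$, $(3,1)$ and $(2,2)$ are all non-complete. (Equivalently, $(2,1),(3,1)$ follow from Lemma~\ref{lemma:kequals1}(i) since there $q<4$, and $(2,2)$ from Lemma~\ref{lem:c21-smallcase}.) For the ``if'' direction the plan is to induct on $k$ via Lemma~\ref{lem:C2C1C2induction}, organised by the value of $q$, the only real issue being to locate, for each $q$, a correct base case from which the induction can be started.

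For $q\ge 4$ the induction starts at the very bottom: Lemma~\ref{lemma:kequals1}(i) gives that $\Proj{1,1}{(0,0)}(V)$ is concurrently complete, and then Lemma~\ref{lem:C2C1C2induction}, applied in turn for $k=2,3,\dots$ (its internal appeal to Lemma~\ref{lem:fourskew} being legitimate, as $q^k\ge 4$ automatically when $k\ge 2$), yields concurrent completeness of $\Proj{k,k}{(0,0)}(V)$ for every $k\ge 1$. This disposes of all pairs with $q\ge 4$ in one stroke, consistent with none of them being exceptional.

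When $q\in\{2,3\}$ the induction cannot be seeded at the bottom, since $\Proj{1,1}{(0,0)}(V)$ (and, for $q=2$, also $\Proj{2,2}{(0,0)}(V)$) are non-complete, and Proposition~\ref{prop:c21-main-exist} is silent here because its Table~\ref{tbl:c21-excep-prop} leaves the entire diagonal $m=k$ unresolved for $q\in\{2,3\}$. The whole argument therefore reduces to establishing two base cases by hand, namely $(q,k)=(3,2)$ and $(q,k)=(2,3)$; granting these, Lemma~\ref{lem:C2C1C2induction} propagates completeness to all $(3,k)$ with $k\ge 2$ and to all $(2,k)$ with $k\ge 3$, which finishes the proof.

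I expect these two base cases to be the crux. For each, take two bisections $\{V_1,V_2\}$ and $\{W_1,W_2\}$ of $V$; if the four components are pairwise disjoint then Lemma~\ref{lem:fourskew} (applicable since $q^k\ge 4$) already produces a $k$-space meeting all four trivially, so the genuine content is the configurations in which some cross-intersection $V_i\cap W_j$ is nonzero. Here Lemma~\ref{lem:fournotskew} cannot be invoked, as it would require concurrent completeness of $\Proj{1,1}{(0,0)}(V(2,3))$ for $(3,2)$, or of $\Proj{2,2}{(0,0)}(V(4,2))$ for $(2,3)$, both of which are false. I would instead run a direct finite check: $\GL(2k,q)$ has only finitely many orbits on ordered pairs of bisections of $V$, so it suffices to exhibit, for one representative of each orbit in which some component of the first bisection meets some component of the second nontrivially, an explicit transversal $k$-space $\sigma$ meeting all four components trivially, in the explicit-basis style already used in Lemma~\ref{lem:c21-smallcase}. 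This is a bounded verification (readily confirmed by machine), and it is precisely this small but fiddly case analysis for $(3,2)$ and $(2,3)$, rather than any of the inductive bookkeeping, that I anticipate being the main obstacle.
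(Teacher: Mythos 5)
Your proposal is correct and follows essentially the same route as the paper: the ``only if'' direction via Lemma~\ref{lem:c21-smallcase} (and Lemma~\ref{lemma:kequals1}), the ``if'' direction by induction on $k$ through Lemma~\ref{lem:C2C1C2induction} seeded at $k=1$ for $q\ge 4$, and a finite machine check of orbit representatives of bisection pairs for the two base cases $(q,k)=(3,2)$ and $(2,3)$ — which is exactly how the paper resolves them (15 and 34 orbits respectively). Your observation that Lemma~\ref{lem:fourskew} disposes of the pairwise-disjoint configurations is a small refinement the paper does not bother with, but it does not change the argument.
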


\begin{proof}
Note that by Lemma \ref{lem:C2C1C2induction}, we need only prove 
concurrent completeness for small $k$, since the result
will then hold by induction on $k$. Hence, it suffices to show the following:
\begin{enumerate}[(a)]
\item $\Proj{1,1}{(0,0)}(V(2,q))$ is concurrently complete if and only if $q\ge4$.
\item $\Proj{2,2}{(0,0)}(V(4,3))$ and $\Proj{3,3}{(0,0)}(V(6,2))$ are each concurrently complete.
\end{enumerate}
Statement (a) is true by Lemma \ref{lemma:kequals1}. We now prove statement (b). Suppose first that
$q=3$ and $k=2$. We resolved this case by computer, since there are only $130$ two-dimensional
subspaces of $V(4,3)$, and $5265$ possible bisections. The stabiliser $G$ in $\GL(4,3)$ of a
bisection $\pi$ has $15$ orbits on the remaining bisections, and the orbit lengths are $24$, $64^2$,
$72$, $96$, $144$, $192$, $288^2$, $384$, $576^3$, $768$, $1152$ (exponents denote multiplicities).
From each of these orbits we chose a bisection, say $\pi'$, and we found a $2$-space $\sigma$
disjoint from each member of each of the bisections $\pi,\pi'$, so that $\sigma$ is collinear with
both $\pi$ and $\pi'$ in $\Proj{2,2}{(0,0)}(V(4,3))$. Therefore, $\Proj{2,2}{(0,0)}(V(4,3))$ is
concurrently complete.

Finally suppose that $q=2$ and $k=3$. We resolved this case also by computer. The stabiliser $G$ in
$\GL(6,2)$ of a bisection $\pi$ has $34$ orbits on the remaining bisections, and the orbit lengths
are $ 98^1 $, $ 336^1 $, $ 441^1 $, $ 588^2 $, $ 784^1 $, $ 1176^1 $, $ 1568^1 $, $ 1764^1 $, $
3528^2 $, $ 4032^1 $, $ 7056^4 $, $ 9408^4 $, $ 14112^6 $, $ 18816^1 $, $ 28224^6 $. Again for a
representative bisection $\pi'$ from each orbit, we found a $3$-space $\sigma$ disjoint from each
member of each of the bisections $\pi,\pi'$. Therefore, $\Proj{3,3}{(0,0)}(V(6,2))$ is concurrently
complete.
\end{proof}

\subsection{Proof of Theorem~\ref{thm:mainC2C1C2} }

Theorem~\ref{thm:mainC2C1C2}(i) follows from Lemmas~\ref{lemma:c21-main-noexist} and~\ref{lemma:kequals1}(ii). 
Now we prove part (ii). 
Let $V=V(2k,q)$ and let $m$ be a positive integer such that $m\le k$. 
Suppose first that $q^k> 4$. Then by Proposition \ref{prop:c21-main-exist}, $\Proj{m,k}{(0,0)}(V)$ is concurrently
complete unless one of the rows of Table~\ref{tbl:c21-excep-prop} holds.
Let us first consider the case that $(q,k)\ne(2,m+1)$,
in particular $m=k$. By Proposition~\ref{propn:main_mk}, $\Proj{k,k}{(0,0)}(V)$ is concurrently complete
for all $q^k>4$. Thus we may suppose that $q^k\le 4$. For these small values, we have $m=k\le  2$, since
$q=k=m+1=2$ is not a possibility because of our assumption  $(q,k)\ne(2,m+1)$. By Lemma~\ref{lem:smallcase}, 
$\Proj{k,k}{(0,0)}(V)$ is concurrently complete if and only if $(q,k)=(4,1)$, giving the exceptions 
$(q,k)=(2,1), (3,1)$, and $(2,2)$. 

For the case $q=2$ and $m=k-1$, we will apply Lemma \ref{lemma:c21-smallerm}. 
We have just shown above, that for $q=2$, $\Proj{k,k}{(0,0)}(V)$ is concurrently complete if and only if
$k>3$. Hence, $\Proj{k-1,k}{(0,0)}(V)$ is concurrently complete when $k\ge 3$. Therefore,
we need only resolve one open case; namely when $m=1$ and $k=2$.
Suppose that $\Proj{1,2}{(0,0)}(V)$ is not concurrently complete. Then there are two bisections
 $\{V_1,V_2\}$ and $\{V_1',V_2'\}$ of $V(4,2)$ such that there is no one-dimensional subspace disjoint
 from all four subspaces $V_1$, $V_2$, $V_1'$, and $V_2'$. However, at most $4\cdot 3=12$
(projective) points can be covered by the lines $V_1$, $V_2$, $V_1'$, and $V_2'$, and there
are $2^4-1=15$ points of $\PG(3,2)$; a contradiction.
Therefore, $\Proj{1,2}{(0,0)}(V)$ is concurrently complete.

This completes the proof.

\section{Acknowledgements}
The first author is grateful for support of an International Postgraduate Research Scholarship
(IPRS). Much of the work of this paper arises from the PhD Thesis of the first author, although we
should point out that since the completion of the thesis, some improvements to the results have been
made by the three authors.  This project forms part of Australian Research Council Discovery Grant
DP0770915 and the second author is supported by Discovery Grant DP0984540. The third author is
supported by an Australian Research Council Federation Fellowship FF0776186.


\def\cprime{$'$}

\end{document}